\newtheorem{theorem}{Theorem}[section]
\newtheorem{corollary}[theorem]{Corollary}
\newtheorem{lemma}[theorem]{Lemma}
\newtheorem{proposition}[theorem]{Proposition}
\newtheorem{definition}[theorem]{Definition}
\newtheorem{remark}[theorem]{Remark}
\newtheorem*{theorem*}{Theorem}
\newtheorem*{lemma*}{Lemma}
\newtheorem*{remark*}{Remark}
\newtheorem*{definition*}{Definition}
\newtheorem*{proposition*}{Proposition}
\newtheorem*{corollary*}{Corollary}
\numberwithin{equation}{section}
\newcommand{\real}{\mathbb{R}}
\let\ced=\c         
\def\x{\xi}
\def\qed{\,\unskip\kern 6pt \penalty 500
\raise -2pt\hbox{\vrule \vbox to8pt{\hrule width 6pt
\vfill\hrule}\vrule}\par}
\definecolor{darkblue}{rgb}{0.05, .05, .65}
\definecolor{darkgreen}{rgb}{0.1, .65, .1}
\definecolor{darkred}{rgb}{0.8,0,0}
\newcommand{\beqn}{\begin{equation}}
\newcommand{\eeqn}{\end{equation}}
\newcommand{\bear}{\begin{eqnarray}}
\newcommand{\eear}{\end{eqnarray}}
\newcommand{\bean}{\begin{eqnarray*}}
\newcommand{\eean}{\end{eqnarray*}}
\begin{document}

\title{\huge \bf Eternal solutions to a porous medium equation with strong nonhomogeneous absorption. Part II: Dead-core profiles}

\author{
\Large Razvan Gabriel Iagar\,\footnote{Departamento de Matem\'{a}tica
Aplicada, Ciencia e Ingenieria de los Materiales y Tecnologia
Electr\'onica, Universidad Rey Juan Carlos, M\'{o}stoles,
28933, Madrid, Spain, \textit{e-mail:} razvan.iagar@urjc.es}
\\[4pt] \Large Philippe Lauren\ced{c}ot\,\footnote{Laboratoire de Math\'ematiques (LAMA) UMR~5127, Universit\'e Savoie Mont Blanc, CNRS, F--73000 Chamb\'ery, France. \textit{e-mail:} philippe.laurencot@univ-smb.fr}\\ [4pt] \Large Ariel S\'{a}nchez\footnote{Departamento de Matem\'{a}tica
Aplicada, Ciencia e Ingenieria de los Materiales y Tecnologia
Electr\'onica, Universidad Rey Juan Carlos, M\'{o}stoles,
28933, Madrid, Spain, \textit{e-mail:} ariel.sanchez@urjc.es} \\[4pt]}
\date{\today}
\maketitle

\begin{abstract}
Existence of a specific family of \emph{eternal solutions} in exponential self-similar form is proved for the following porous medium equation with strong absorption
$$
\partial_t u-\Delta u^m+|x|^{\sigma}u^q = 0 \;\;\text{ in }\;\; (0,\infty)\times\mathbb{R}^N,
$$
with $m>1$, $q\in(0,1)$ and $\sigma=2(1-q)/(m-1)$. Looking for solutions of the form
$$
u(t,x)=e^{-\alpha t}f(|x|e^{\beta t}), \qquad \alpha=\frac{2}{m-1}\beta,
$$
it is shown that, for $m+q>2$, there exists a unique exponent $\beta_*\in(0,\infty)$ for which there exists a one-parameter family of compactly supported profiles presenting a \emph{dead core}. The precise behavior of the solutions at their interface is also determined. Moreover, these solutions show the optimal limitations for the finite time extinction property of genuine non-negative solutions to the Cauchy problem, studied in previous works.
\end{abstract}

\smallskip

\noindent {\bf AMS Subject Classification 2010:} 35C06, 35K65, 35K10, 34D05, 35A24.

\smallskip

\noindent {\bf Keywords and phrases:} porous medium equation, spatially inhomogeneous absorption, eternal solutions, exponential self-similarity, dead core.

\section{Introduction and main results}

In the present work we complete the quest, started in the previous paper \cite{IL24}, of constructing and classifying some specific solutions in exponential form to the following quasilinear diffusion equation with strong absorption
\begin{equation}\label{eq1}
\partial_t u-\Delta u^m+|x|^{\sigma}u^q=0, \qquad (t,x)\in(0,\infty)\times\real^N,
\end{equation}
in the range of exponents
\begin{equation}\label{range.exp}
m>1, \qquad q\in(0,1), \qquad \sigma=\sigma_*:=\frac{2(1-q)}{m-1}.
\end{equation}
Assuming furthermore that $m+q>2$, this paper is devoted to the construction of a family of \emph{eternal solutions} presenting dead cores and interfaces, the former being a rather unexpected feature. The existence of these solutions, together with the more usual self-similar solutions with a decreasing profile established in \cite{IL24} for the same critical exponent $\sigma_*$ as in \eqref{range.exp}, proves that the dynamics of Eq.~\eqref{eq1} is expected to be very complex and interesting, since according to the general experience on absorption-diffusion equations, self-similar solutions are the expected profiles for the large time behavior of general solutions.

The most interesting aspect related to Eq.~\eqref{eq1} is the competition between its two terms: on the one hand, a quasilinear diffusion term featuring a mass conservation property and, on the other hand, an absorption term, depending also on the space variable, which introduces a dissipation of the $L^1$-norm of solutions. This competition gives rise to a number of critical exponents that have been identified, splitting the analysis of the equation into different ranges where the dynamics is led by either of the two effects, or by a kind of balance between them. Indeed, letting $\sigma=0$ in Eq.~\eqref{eq1}, three such ranges have been found for $q>1$, namely

$\bullet$ $q>m+2/N$, where the diffusion controls the dynamics of Eq.~\eqref{eq1}, which is rather easily described by an asymptotic simplification as $t\to\infty$, see for example \cite{KP86}.

$\bullet$ $m<q<m+2/N$, where a balance between diffusion and absorption produces some \emph{very singular solutions} which are specific to this range. This range is now also well understood, see for example \cite{KP86, PT86, KU87, KV88, KPV89, Le97, Kwak98} and references therein.

$\bullet$ $1<q<m$, where the absorption term becomes to dominate over the diffusion for large times and leads to a localization of the compactly supported solutions (contrasting with the expansion of the support induced by the diffusion), established at least in one space dimension in \cite{PT85, CV96, CV99}. There are still a number of open problems to be understood in this range.

However, probably the most striking and mathematically interesting range is the one known as \emph{strong absorption}; that is, $m>1$ and $q\in(0,1)$. As the name suggests, in this range, the absorption term becomes very strong and governs the evolution, leading to a number of mathematical phenomena that are not present in the other ranges described above. Perhaps the most striking feature specific to this range is the \emph{instantaneous shrinking of the supports} of solutions: even starting with an initial condition $u_0$ such that $u_0(x)>0$ for any $x\in\real^N$, under suitable conditions on $u_0$, solutions to Eq.~\eqref{eq1} become instantaneously compactly supported; that is, ${\rm supp}\,u(t)\subseteq B(0,R(t))$ for some $R(t)>0$ and for any $t>0$. Letting $\sigma=0$ in Eq.~\eqref{eq1}, such an unexpected property has been established in \cite{EK79, Ka84, Abd98}. Besides, another typical feature of solutions in this range is the \emph{finite time extinction}: we say that a solution to Eq.~\eqref{eq1} vanishes in finite time if there is $T_e\in(0,\infty)$ such that $u(t)\not\equiv0$ for $t\in(0,T_e)$, but $u(T_e)\equiv0$. Letting still $\sigma=0$ in Eq.~\eqref{eq1}, this vanishing property stems from the ordinary differential equation $\partial_t u=-u^q$ obtained by neglecting the diffusion term, emphasizing thus the domination of the absorption, as proved in \cite{Ka75, Ka84}. Due to the difficulty of the problem, a description of the large time behavior of general solutions is only available when $m+q=2$, see \cite{GV94}.

Returning to Eq.~\eqref{eq1} with $m>1$, $q\in(0,1)$ and $\sigma>0$, it was proved in \cite{Belaud01, BeSh07} that any solution to Eq.~\eqref{eq1} posed in a bounded domain $\Omega\subset\real^N$ with homogeneous Neumann boundary condition vanishes in finite time provided $0<\sigma<\sigma_*$. The analogous result with homogeneous Dirichlet boundary conditions is established in \cite{BeSh22}, following similar results for the semilinear equation ($m=1$) in \cite{BHV01, BD10}.

These results determined the authors to explore the range $\sigma>\sigma_*$ in the recent work \cite{ILS24}. On the one hand, it has been shown therein that instantaneous shrinking of the supports is in force for any $\sigma>0$ and for any initial condition $u_0\in L^{\infty}$, a fact that emphasizes the strength of the weight $|x|^{\sigma}$, since for $\sigma=0$ the condition $u_0(x)\to0$ as $|x|\to\infty$ is required for the shrinking. On the other hand, if $\sigma>\sigma_*$, it is shown in\cite{ILS24} that a very wide class of initial conditions, satisfying the property $u_0(x)>0$ in a small neighborhood of the origin, give rise to solutions that \emph{do not vanish in finite time} despite the shrinking of their supports and, in fact, stabilize to a self-similar solution as $t\to\infty$, in stark contrast with the range $\sigma<\sigma_*$. Moreover, the short note \cite{IL23} proves that any bounded solution vanishes in finite time if $\sigma<\sigma_*$ and gives rather sharp conditions on the initial data $u_0\in L^{\infty}(\real^N)$ for finite time extinction to take place when $\sigma>\sigma_*$.

The previous comments motivate a deeper study of the critical exponent $\sigma=\sigma_*$, seen as the limiting exponent between a range where finite time extinction always occurs and another one where this property strongly depends on the initial condition. Thus, the current paper completes the study started in \cite{IL24} providing self-similar solutions with an exponential time decay, but not finite time extinction, which also play the role of optimal conditions for positivity at any $t>0$, as it is explained below.

\medskip

\noindent \textbf{Main results}. We are looking for some special solutions to~\eqref{eq1} with $m$, $q$ and $\sigma$ as in \eqref{range.exp} having an \emph{exponential self-similar form}; that is,
\begin{equation}\label{SSS}
u(t,x)=e^{-\alpha t}f(|x|e^{\beta t}), \qquad (t,x)\in \real\times\real^N.
\end{equation}
Such solutions are also called \emph{eternal}, as they are defined for all $t\in\real$. Introducing the ansatz~\eqref{SSS} into Eq.~\eqref{eq1}, letting $\xi=|x|e^{\beta t}$ and performing some direct calculations, we readily find that the self-similar exponents must satisfy the condition
\begin{equation}\label{SS.exp}
\alpha=\frac{2}{m-1}\beta,
\end{equation}
where $\beta$ becomes a free parameter for our analysis, while the profile $f$ solves the differential equation
\begin{equation}\label{ODE}
(f^m)''(\xi)+\frac{N-1}{\xi}(f^m)'(\xi)+\alpha f(\xi)-\beta\xi f'(\xi)-\xi^{\sigma}f^q(\xi)=0, \qquad \xi>0.
\end{equation}
Let us notice at this point that, given $\lambda>0$ and a solution $f$ to~\eqref{ODE}, we obtain by direct calculations that the rescaled function
\begin{equation}\label{resc}
f_{\lambda}(\xi):=\lambda f(\lambda^{-(m-1)/2}\xi), \qquad \xi\ge 0,
\end{equation}
is also a solution to \eqref{ODE}.

In a first part of this research, published in \cite{IL24}, we proved that there is a unique pair of positive exponents $(\alpha^*,\beta^*)$ satisfying \eqref{SS.exp} such that there exists a unique self-similar profile $f^*$ with the properties $f^*(0)=1$, $(f^*)'(0)=0$, and there is $\xi^*\in(0,\infty)$ such that $f(\xi)>0$ for $\xi\in[0,\xi^*)$, $f(\xi^*)=0$, $(f^m)'(\xi^*)=0$ and $f$ is decreasing on its positivity interval $[0,\xi^*]$. The self-similar solution \eqref{SSS} with profile $f^*$ presents thus analogous properties to the compactly supported self-similar solution with algebraic time decay constructed in \cite{ILS24}, which proved to be fundamental for the large time behavior of general solutions to Eq. \eqref{eq1} when $\sigma>\sigma_*$.

A second and rather surprising type of eternal self-similar solution comes from the availability of profiles presenting a \emph{dead core}. More precisely, we are looking for profiles $f$ solving~\eqref{ODE} with compact support $[\xi_*,\xi_0]\subset (0,\infty)$ which satisfy the standard contact condition $(f^m)'(\xi_*)=(f^m)'(\xi_0)=0$ at the edges of their support, see the beginning of Section~\ref{sec.dci} for precise definitions. Let us again notice that, if $f$ is such a profile, then the rescaled functions $f_{\lambda}$ defined in \eqref{resc} for any $\lambda>0$ fulfill the same properties of dead core and interface, in this case with
$$
f_{\lambda}(\xi)>0, \quad 0<\lambda^{(m-1)/2}\xi_*<\xi<\lambda^{(m-1)/2}\xi_0<\infty.
$$
Thus we have to fix the initial point of the support of the profile as, for example, $\xi_*=1$ in order to be able to show uniqueness. We can now state our existence and uniqueness result for such self-similar solutions, which holds true only for $m+q>2$.

\begin{theorem}\label{th.1}
Let $m$, $q$ and $\sigma$ as in~\eqref{range.exp} be such that $m+q>2$. Then there exist a unique exponent $\beta_*$ (and corresponding $\alpha_*$ given by \eqref{SS.exp}) and a unique non-negative radially symmetric self-similar solution to~\eqref{eq1} in exponential form
$$
U_*(t,x)=e^{-\alpha_*t}f_*(|x|e^{\beta_*t}), \qquad (t,x)\in \real\times\real^N,
$$
where the profile $f_*$ is a solution to~\eqref{ODE} (with $\beta=\beta_*$) such that ${\rm supp}\,f_*=[1,\xi_0]$ for some $\xi_0\in(1,\infty)$.
\end{theorem}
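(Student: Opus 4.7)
The plan is to establish Theorem \ref{th.1} by a shooting argument in the free parameter $\beta > 0$ for \eqref{ODE}, relying on two local analyses (at the left and right interfaces) and on a comparison argument for uniqueness.

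\emph{Local analyses at the two interfaces.} Starting from $\xi = 1$ with $f(1) = 0$ and the contact condition $(f^m)'(1) = 0$, and requiring positivity for $\xi > 1$ near $\xi = 1$, the assumption $m + q > 2$ renders the absorption term $\xi^\sigma f^q$ strictly subdominant in \eqref{ODE} as $\xi \to 1^+$. The balance of $(f^m)''$ and $\beta\xi f'$ — both positive there, since $f' > 0$ — then gives the PME-type expansion
\begin{equation*}
f(\xi;\beta) \sim \left(\frac{\beta(m-1)}{m}\right)^{1/(m-1)} (\xi-1)^{1/(m-1)} \qquad \text{as } \xi \to 1^+,
\end{equation*}
and a standard contraction argument in pressure-type variables $\phi = f^{m-1}$ yields, for each $\beta > 0$, a unique local profile $f(\cdot;\beta)$ that extends to a maximal positivity interval $(1,\xi_{\max}(\beta))$. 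At a putative right interface $\xi_0$ with $f'(\xi_0^-) < 0$ the naive PME-type balance becomes sign-incompatible (since $(f^m)''$ would need to match the now-negative $\beta\xi f'$), and under $m + q > 2$ the correct leading balance in \eqref{ODE} is between convection and absorption, $-\beta\xi f' - \xi^\sigma f^q \approx 0$, producing the non-standard local interface
\begin{equation*}
f(\xi) \sim \left(\frac{(1-q)\xi_0^{\sigma-1}}{\beta}\right)^{1/(1-q)} (\xi_0 - \xi)^{1/(1-q)} \qquad \text{as } \xi \to \xi_0^-,
\end{equation*}
which fixes the target behavior of the shooting.

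\emph{Shooting classification and existence.} We split $(0,\infty)$ into the set $\mathcal{A}$ of parameters for which $f(\cdot;\beta)$ fails to produce a compact right interface (either $\xi_{\max}(\beta) = +\infty$, or the solution terminates without the correct contact behavior from above) and the set $\mathcal{B}$ of parameters for which $f$ reaches zero at some finite $\xi_{\max}(\beta)$ with $(f^m)'(\xi_{\max}^-) \neq 0$, that is, a non-admissible ``crashing'' interface. Continuous dependence of $f(\cdot;\beta)$ on $\beta$ yields that both sets are open. The existence argument consists in proving that $\mathcal{A}$ is non-empty for sufficiently small $\beta$ (weak convection cannot drive the profile down to zero at finite distance) and that $\mathcal{B}$ is non-empty for sufficiently large $\beta$ (strong convection forces a premature drop with non-vanishing flux), so that the common boundary $\partial\mathcal{A}\cap\partial\mathcal{B}$ is non-empty; any element thereof furnishes an admissible $\beta_*$ whose profile must realize the convection-absorption interface described above.

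\emph{Uniqueness and main obstacle.} Uniqueness of $\beta_*$ will rest on strict monotonicity in $\beta$ of the local expansion at $\xi = 1^+$, combined with a Sturm-type zero-counting argument for the difference $f(\cdot;\beta_1)-f(\cdot;\beta_2)$ of two candidate admissible profiles; the monotonicity at $\xi = 1^+$ rigidly prescribes the initial ordering, and a careful analysis of the sign changes of this difference along the ODE flow should preclude both profiles from being compactly supported. The principal technical difficulties are expected to be, first, the quantitative ODE estimates needed to certify that $\mathcal{A}$ and $\mathcal{B}$ are both non-empty — which require delicate control over the three-way interplay of degenerate diffusion, convection and singular absorption on the full positivity interval — and, second, the uniform-in-$\beta$ estimates at the right interface ensuring that the limit profile at $\beta_* \in \partial\mathcal{A}\cap\partial\mathcal{B}$ realizes the expected convection-absorption asymptotics rather than degenerating (for instance with $\xi_0(\beta_*) \in \{1,\infty\}$).
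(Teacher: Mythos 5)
Your proposal takes a genuinely different route — a one-sided shooting in the parameter $\beta$ from the dead-core edge $\xi_*=1$ — whereas the paper explicitly reformulates the problem in a phase plane and settles it by tracking \emph{two} invariant manifolds toward a common transversal. Indeed, the authors state at the end of the Introduction that a classical shooting method ``does not seem applicable here,'' and that warning points at precisely the step you leave as an assertion.

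The gap is the claimed trichotomy/openness of $\mathcal{A}$ and $\mathcal{B}$. After passing to the variables $(Y,Z)$ of Section~\ref{subsec.aaf}, your shooting trajectory emerges from a hyperbolic saddle $P_1$ and is supposed to land on the (non-hyperbolic, one-dimensional centre manifold of the) critical point $P_0$, which has a zero eigenvalue. Between these two there sits a third critical point $P_2$ (respectively $Q_2$ after the further change of variables), corresponding to the positive ``quasi-equilibrium'' profile $f^{m-q}\sim\mathrm{const}\cdot\xi^\sigma$, and Lemma~\ref{lem.Q2} shows that for an intermediate range of the rescaled parameter $K$ (equivalently $\beta$), $Q_2$ is a \emph{focus}. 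Near a focus there is no monotone relation between the shooting parameter and the qualitative fate of the orbit; an arbitrarily small variation of $\beta$ can add (or remove) a full loop around $Q_2$ before the trajectory heads toward $P_0$. This is exactly the mechanism that destroys the clean partition you need: $\mathcal{A}$ (``no compact right interface'') does not stay open — a trajectory trapped near the focus for all $\eta$ is a boundary phenomenon, not an interior one — and there is no reason $\mathcal{A}\cup\mathcal{B}$ should exhaust a neighbourhood of the good $\beta_*$, since nothing rules out sequences of admissible parameters accumulating from both sides as the number of loops around $Q_2$ changes. Your statement that ``continuous dependence yields that both sets are open'' ignores all of this; it is precisely the assertion that would need a proof, and the focus dynamics make it implausible in general. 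Likewise, the uniqueness sketch via Sturm-type zero counting for $f(\cdot;\beta_1)-f(\cdot;\beta_2)$ is not an argument: the equation is not linear, both profiles degenerate at both ends of their supports (with different exponents), and the known Sturm-type intersection-comparison theory for degenerate diffusion does not deliver what you need without the kind of quantitative barrier structure the paper builds by hand.

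What the paper does instead is shoot from \emph{both} critical points toward a common section. It identifies the single orbit $l_1(K)$ leaving $P_1$ (the dead-core separatrix) and the single orbit $l_0(K)$ entering $P_0$ (the interface separatrix), lets each one run only until it first (respectively last) meets the transversal $\{V=0\}$, and records the ordinate $U_1(K)$, respectively $U_0(K)$, at that crossing. Both orbit pieces are short — they do not loop around $Q_2$ — so the spiral dynamics are irrelevant. Propositions~\ref{prop.1}, \ref{prop.0} and~\ref{prop.monot} then show, via barrier (invariant-region) comparisons for the one-parameter family of vector fields $\mathbf{S}_K$, that $K\mapsto U_1(K)$ and $K\mapsto U_0(K)$ move strictly and in opposite directions; Propositions~\ref{prop.cd} and~\ref{prop.cd2} supply continuity including at the endpoints of the relevant $K$-ranges; and Bolzano's theorem gives a unique $K_*$ with $U_0(K_*)=U_1(K_*)$, i.e.\ a unique matching of the two separatrices. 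If you want to salvage a shooting-style proof you would have to reproduce an analogue of this two-ended matching; the one-ended shooting, as written, does not close.
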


The availability of such dead core profiles is a interesting feature of Eq.~\eqref{eq1}. Indeed, formation of dead cores has been investigated in diffusion equations involving a reaction term, starting from the paper \cite{BS84}. More recently, dead cores have been also discovered as properties of solutions to diffusion equations with strong absorption, see for example \cite{GLS10, S11}. However, these works deal with problems featuring a finite time extinction of the solutions, and it is shown therein that the rate of formation of dead cores is \emph{non-self-similar}. The solution we construct in Theorem~\ref{th.1} differs thus from these previous works since it is a solution to the Cauchy problem in $\real^N$ and does not vanish in finite time, the dead core being present for any $t\in\real$ with edges evolving at an exponential rate. Self-similar solutions presenting dead cores (and a finite time blow-up) are also uncovered for quasilinear reaction-diffusion equations in recent works by two of the authors \cite{IS22, IS24}.

Since we have identified self-similar solutions with positive values at $x=0$ in \cite{IL24} and Theorem~\ref{th.1} provides a family of solutions (modulo the rescaling~\eqref{resc}) presenting a dead core in a neighborhood of the origin, a natural question arises with respect to the availability of profiles (and thus self-similar solutions in exponential form) such that $f(0)=0$ but $f(\xi)>0$ in a right neighborhood of the origin. This is the subject of the next result, where, in order to ensure uniqueness and in view of~\eqref{resc}, we have now to fix the interface to, for example, $\xi_0=1$.

\begin{theorem}\label{th.2}
Let $m$, $q$ and $\sigma$ as in~\eqref{range.exp} be such that $m+q>2$. Then there exists an exponent $\beta_0>0$ such that, for any $\beta\in[\beta_0,\infty)$ and corresponding $\alpha\in[\alpha_0,\infty)$ given by \eqref{SS.exp}, there exists a unique non-negative radially symmetric self-similar solution in exponential form
$$
U(t,x)=e^{-\alpha t}f(|x|e^{\beta t}), \qquad (t,x)\in\mathbb{R}\times\mathbb{R}^N,
$$
where the profile $f$ is a solution to~\eqref{ODE} such that ${\rm supp}\,f_*=[0,1]$. Moreover, the profile $f$ has the following local behavior at the origin:
\begin{equation}\label{beh.P2}
f(\xi)=\left[\frac{(m-1)^2}{2m[N(m-1)+2]}\right]^{1/(m-q)}\xi^{2/(m-1)}+o(\xi^{2/(m-1)}) \quad {\rm as} \ \xi\to0.
\end{equation}
\end{theorem}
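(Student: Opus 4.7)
\medskip
\noindent \textbf{Proof plan.}

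\smallskip
\noindent \textit{Step 1: Local behavior at the origin.} For every $\beta>0$ (with $\alpha = 2\beta/(m-1)$ by~\eqref{SS.exp}), the goal is to construct a local solution of~\eqref{ODE} near $\xi=0$ with the leading asymptotics~\eqref{beh.P2}. The constant appearing there is pinned down by the leading-order balance: plugging $f(\xi) \sim A \xi^{2/(m-1)}$ into~\eqref{ODE}, the convective contribution $\alpha f - \beta \xi f'$ cancels identically at this order thanks to~\eqref{SS.exp}, the diffusion part contributes
\begin{equation*}
\frac{2m[N(m-1)+2]}{(m-1)^2}\, A^m \xi^{2/(m-1)},
\end{equation*}
and the absorption term $\xi^\sigma f^q$ contributes $A^q \xi^{2/(m-1)}$, using $\sigma=2(1-q)/(m-1)$; balancing both forces $A^{m-q}=(m-1)^2/(2m[N(m-1)+2])$, which matches~\eqref{beh.P2}. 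To upgrade this formal computation to a rigorous local solution, I would write $f(\xi) = A \xi^{2/(m-1)} + h(\xi)$ with $h=o(\xi^{2/(m-1)})$, derive a Volterra-type integral equation for $h$ and solve it by a contraction argument in a suitable weighted function space on a small interval $(0,\delta)$. Since the rescaling~\eqref{resc} acts transitively on the resulting one-parameter family of local solutions, it is enough to work with a single representative $f(\cdot;\beta)$.

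\smallskip
\noindent \textit{Step 2: Shooting to an interface and the threshold $\beta_0$.} Extend $f(\cdot;\beta)$ to its maximal interval of positivity. To discriminate the formation of a finite interface from global positivity, I would recast~\eqref{ODE} as an autonomous planar system via
\begin{equation*}
X(s) := \xi^{-2/(m-1)} f(\xi), \qquad Y(s) := \xi^{-(m+1)/(m-1)} (f^m)'(\xi), \qquad s = \log\xi,
\end{equation*}
so that the local solution emerges from the equilibrium $P_0 = \bigl(A,\tfrac{2m}{m-1}A^m\bigr)$ as $s\to-\infty$, while an interface at some $\xi_1(\beta)\in(0,\infty)$ corresponds to the trajectory reaching the axis $\{X=0\}$. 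The central claim is that the set of $\beta$'s for which the outgoing trajectory from $P_0$ hits $\{X=0\}$ in finite $s$-time is precisely $[\beta_0,\infty)$ for some $\beta_0>0$. For large $\beta$, the enhanced drift $-\beta\xi f'$ should force the collapse of $f$ to $0$ at a finite $\xi_1(\beta)$, via comparison with an appropriate super-solution; for small $\beta$ (including $\beta=0$, which reduces~\eqref{ODE} to a stationary equation), an invariant region in the $(X,Y)$-plane should trap the trajectory away from $\{X=0\}$. Monotonicity of the shooting in $\beta$, established either by an explicit pairwise comparison of the two ODEs~\eqref{ODE} or by a convexity argument in the phase plane, would then yield the unique threshold $\beta_0$.

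\smallskip
\noindent \textit{Step 3: Rescaling and uniqueness.} For each $\beta \geq \beta_0$, apply~\eqref{resc} with $\lambda = \xi_1(\beta)^{-2/(m-1)}$ to normalize the support of the profile to $[0,1]$; the leading behavior~\eqref{beh.P2} is invariant under this operation since the coefficient $A$ depends only on $m,q,N$. Uniqueness of the normalized profile follows from the uniqueness-up-to-rescaling of the local solution at $\xi=0$ established in Step~1. The main obstacle is clearly Step~2: obtaining the precise characterization $[\beta_0,\infty)$ of the admissible exponents, which requires a delicate phase-plane analysis of the autonomous system above and a monotonicity-in-$\beta$ argument. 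The hypothesis $m+q>2$ (equivalently $\sigma<2$) is expected to be used decisively here, as it controls the signs of several coefficients appearing in the linearization at $P_0$ and at the critical points on $\{X=0\}$, and ensures that the integral equation in Step~1 is contractive on the correct weighted scale.
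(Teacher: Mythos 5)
The constant $A$ in~\eqref{beh.P2} is correctly derived in Step~1, and the broad three-step outline (local analysis at $\xi=0$, shooting/phase-plane dichotomy in $\beta$, normalization by~\eqref{resc}) is reasonable in spirit. However, there is a genuine gap in the logic of Steps~1 and~2 that the paper's proof is specifically designed to avoid.

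In the regime actually relevant to Theorem~\ref{th.2} ($\beta\ge\beta_0$, equivalently $K\le K_0\le K_u$ in the paper's notation), the fixed point corresponding to the origin behaviour $f\sim A\xi^{2/(m-1)}$ is an \emph{unstable node}: the linearization in any of the equivalent phase-plane formulations (the paper's $Q_2$, or your $P_0=(A,\tfrac{2m}{m-1}A^m)$) has two positive eigenvalues. Consequently there is a full one-parameter family of orbits emanating from this point, i.e.\ a one-parameter family of distinct local solutions of~\eqref{ODE} sharing the leading behaviour~\eqref{beh.P2}. You assert in Step~1 that the rescaling~\eqref{resc} acts transitively on this family, but~\eqref{resc} is exactly time-translation $s\mapsto s+\mathrm{const}$ in your autonomous system, and time-translation does \emph{not} exhaust the family of orbits sourced at a node. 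Your Volterra/contraction argument would therefore produce either no canonical representative or a family with a free parameter, so the ``outgoing trajectory from $P_0$'' in Step~2 is not well-defined and the map $\beta\mapsto\xi_1(\beta)$ is not a shooting map. A second, related omission: you only test whether $f$ reaches zero, i.e.\ whether the orbit hits $\{X=0\}$, but a compactly supported profile must also satisfy the contact condition $(f^m)'(\xi_0)=0$, which in your variables is $Y(\log\xi_0)=0$. Thus the trajectory must hit the \emph{critical point} $(0,0)$, not just the axis $\{X=0\}$; a generic member of the family from $P_0$ will cross $\{X=0\}$ with $Y\ne0$ and does not give an admissible profile.

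The paper circumvents both issues by fixing the unique orbit at the \emph{interface} end rather than at the origin: the critical point $P_0=(0,0)$ in the $(Y,Z)$ phase plane (which encodes $f(\xi_0)=(f^m)'(\xi_0)=0$) has a uniquely determined one-dimensional center manifold with a single orbit $l_0(K)$ in $\{Z>0\}$ (Lemma~\ref{lem.P0}); the whole argument then tracks $l_0(K)$ backward and shows (Proposition~\ref{prop.Ksmall}, Lemma~\ref{lem.U0}, Corollary~\ref{cor00}) that for $K\le K_0$ it connects to the node behind the origin. The local behaviour~\eqref{beh.P2} is then a \emph{byproduct} of the convergence $Z_{0,K}(\eta)\to Z_K$, not an input. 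A further, smaller technical point: your choice of phase variables with $s=\log\xi$ places the interface at a finite $s$-time and produces a vector field with factors $1/X^{m-1}$; the paper's time change $d\eta\propto (\xi/f^{m-1})\,d\xi$ sends both ends of the support to $\pm\infty$ and desingularizes the interface, which makes the Poincar\'e--Bendixson and invariant-region arguments workable. If you insist on starting from $\xi=0$, you would need an additional selection principle (e.g.\ proving that exactly one member of the one-parameter family limits to $Q_0$) which effectively reproduces the paper's center-manifold argument at $Q_0$ anyway.
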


\begin{remark}\label{rem.1}
	As it will follow from the proofs, the self-similar exponents $\beta^*$, $\beta_*$, and $\beta_0$ of the solutions obtained in \cite{IL24}, Theorem~\ref{th.1}, and Theorem~\ref{th.2}, respectively, satisfy $\beta^*<\beta_*<\beta_0$.
\end{remark}

\begin{remark}\label{rem.1bis}
The solutions given by Theorem~\ref{th.2} behave as 
$$
U(t,x)\sim\left[\frac{(m-1)^2}{2m[N(m-1)+2]}\right]^{1/(m-q)}|x|^{2/(m-1)}, \quad {\rm as} \ t\to-\infty,
$$
while solutions given by Theorem~\ref{th.1} satisfies $U_*(t,x) = 0$ as $t\to-\infty$, for any $x\in\real^N$.
\end{remark}

Let us point out that the solutions constructed in Theorems~\ref{th.1} and~\ref{th.2} are \emph{borderline cases} emphasizing the optimality of the conditions established in \cite{IL23} for finite time extinction to take place. Indeed, on the one hand, for non-negative initial conditions $u_0$ supported in an annulus $\{x\in\real^N\ :\ r < |x|<R\}$ with $0<r<R$, it follows from \cite[Corollary~3.2]{IL23} that the corresponding solution to~\eqref{eq1} (with parameters as in~\eqref{range.exp}) vanishes in finite time if the amplitude $\|u_0\|_{\infty}$ is sufficiently small, and Theorem~\ref{th.1} implies that such a smallness criterion cannot be removed. On the other hand, when $u_0$ is positive in $\real^N\setminus\{0\}$ with $u_0(0)=0$, a condition guaranteeing finite time extinction of the corresponding solution to~\eqref{eq1} is that there are $a>\sigma_*/(1-q)=2/(m-1)$ and $C>0$ such that $u_0(x)\le C |x|^a$ for $x\in\real^N$, along with the smallness of $\|u_0\|_\infty$, see \cite[Corollary~3.2]{IL23}. According to~\eqref{beh.P2}, the solutions constructed in Theorem~\ref{th.2} show the sharpness of this criterion for extinction in finite time to occur.

\medskip

\textbf{Comments on the techniques.} While the proofs in the first part of this research \cite{IL24} rely on a classical shooting method, such an approach does not seem applicable here. Thus, we employ a different and more involved technique, starting from an alternative formulation of the differential equation~\eqref{ODE} as an autonomous dynamical system in $\mathbb{R}^2$ obtained through a transformation, as indicated in Section~\ref{subsec.aaf}. Some preliminary results are first needed in order to exploit fully this transformation. We begin with the identification of the behavior of the solutions to~\eqref{ODE} at the edges of their support when the latter is included in $[0,\infty)$, which is performed in Section \ref{sec.dci}. With the aid of these preliminary results, we prove in Section~\ref{sec.dynamic} that these solutions correspond to complete orbits of the autonomous dynamical system, connecting two critical points of it.

The remaining sections are devoted to the analysis of the trajectories of this dynamical system. Let us mention here that the fundamental tool in this analysis is the identification of \emph{invariant regions}, with boundaries playing the role of barriers for the trajectories of the system, limiting their $\omega$-limit sets. On the one hand, we show in Section~\ref{subsec.irm} that the availability of such barriers then implies a monotone variation (with an opposite monotonicity) of the trajectories connecting from/to the two critical points of the dynamical system unfolding the dead core behavior, respectively the interface behavior, with respect to the parameter $\beta$. On the other hand, a rather technical analysis, performed in Section~\ref{subsec.cd}, is needed in order to establish the \emph{continuous dependence} when changing the parameter $\beta$, as we in fact deal with a one-parameter family of distinct dynamical systems. The proof of Theorem \ref{th.1} then follows from the opposite monotonicity, the continuous dependence and an application of Bolzano's Theorem. As for Theorem~\ref{th.2}, its proof is a consequence of a part of the analysis establishing the configuration of the trajectories for $\beta$ large, given in Section~\ref{subsec.cppsK}.

\section{Dead cores and interfaces}\label{sec.dci}

In this section, we rigorously define what we mean by dead core and interface and we give some preliminary results concerning the local behavior of a profile near these two edges of its support. These results will be then employed at the end of the section, when an alternative formulation of the differential equation \eqref{ODE} is introduced, in order to identify the critical points of interest in the forthcoming study. We thus start by defining the two main features of the solutions considered in this paper.

\begin{definition}\label{def.dcp}
Let $\beta>0$.
	
\medskip
	
(i) A solution $f$ to~\eqref{ODE} has a \emph{dead core} if there exists $\xi_*\ge 0$ and $\delta_*>0$ such that
	\begin{equation*}
		f(\xi)=0 \;\text{ for any }\; \xi\in[0,\xi_*], \qquad f(\xi)>0 \;\text{ for any }\; \xi\in(\xi_*,\xi_*+\delta_*], 
	\end{equation*}
	and
	\begin{equation*}
		(f^m)'(\xi_*)=0. 
	\end{equation*}

\medskip

(ii) A solution $f$ to~\eqref{ODE} has an \emph{interface} if there exists $\xi_0>0$ and $\delta_0\in (0,\xi_0)$ such that
	\begin{equation*}
		f(\xi)=0 \;\text{ for any }\; \xi\in[\xi_0,\xi_0+\delta_0], \qquad f(\xi)>0 \;\text{ for any }\; \xi\in[\xi_0-\delta_0,\xi_0), 
	\end{equation*}
	and
	\begin{equation*}
		(f^m)'(\xi_0)=0. 
	\end{equation*}
\end{definition}

Let us mention here that, with an ``abuse of language", we also call a dead core the vanishing of the profile only at $\xi_*=0$, as included in part (i) of Definition \ref{def.dcp}. We analyze in the next subsections the local behavior of a profile $f$ as $\xi\to\xi_*$ when $\xi_*>0$ and as $\xi\to\xi_0$, recalling that $m+q>2$.

\subsection{Behavior at a dead core}\label{subsec.bdc}

The next result makes precise the local behavior of a profile with a dead core near its left edge of the support.

\begin{lemma}\label{lem.p001}
	Let $\beta>0$ and consider a solution $f$ to \eqref{ODE} having a dead core at $\xi_*>0$ in the sense of Definition~\ref{def.dcp}. Then $f$ is increasing in a right neighborhood of $\xi_*$ with
	\begin{subequations}\label{p001}
		\begin{equation}
			\lim_{\xi\searrow\xi_*} \big(f^{m-1} \big)'(\xi) = \frac{m-1}{m} \beta\xi_*, \label{p001a}
		\end{equation}
		and, as $\xi\searrow\xi_*$,
		\begin{equation}
			f(\xi) = \left( \frac{m-1}{m} \beta \xi_* \right)^{1/(m-1)} (\xi-\xi_*)^{1/(m-1)} + o\big( (\xi-\xi_*)^{1/(m-1)}  \big). \label{p001b}
		\end{equation}
	\end{subequations}
\end{lemma}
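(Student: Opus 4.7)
The plan is to extract from equation~\eqref{ODE} a concise integral representation for $(f^m)'(\xi)$, deduce from it the strict monotonicity of $f$ in a right neighborhood of $\xi_*$ together with matching upper and lower bounds, and finally identify the leading balance responsible for~\eqref{p001a}.

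I would first multiply~\eqref{ODE} by $\xi^{N-1}$ to put the principal part into divergence form and integrate on $[\xi_*,\xi]$, exploiting the boundary values $f(\xi_*)=(f^m)'(\xi_*)=0$, then perform an integration by parts on the term $\beta s^N f'(s)$. This should yield the key identity
\begin{equation*}
(f^m)'(\xi) = \beta\xi f(\xi) + R(\xi), \qquad R(\xi):=\xi^{1-N}\int_{\xi_*}^{\xi}s^{N-1}\bigl[-(\alpha+N\beta)f(s)+s^{\sigma}f^{q}(s)\bigr]\,ds.
\end{equation*}
Since $f>0$ on $(\xi_*,\xi_*+\delta_*]$, dividing by $f$ and using the identity $(f^m)'/f=\tfrac{m}{m-1}(f^{m-1})'$ recasts the above as
\begin{equation*}
(f^{m-1})'(\xi) = \frac{m-1}{m}\beta\xi + \frac{m-1}{m}\cdot\frac{R(\xi)}{f(\xi)},
\end{equation*}
so the entire task reduces to proving $R(\xi)/f(\xi)\to 0$ as $\xi\searrow\xi_*$.

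Because $q<1$ and $f$ is continuous at $\xi_*$ with $f(\xi_*)=0$, the inequality $(\alpha+N\beta)f(s)^{1-q}\le s^{\sigma}/2$ holds on $[\xi_*,\xi]$ as soon as $\xi$ is close enough to $\xi_*$, so the integrand defining $R$ is non-negative on that interval; hence $R(\xi)\ge 0$. This forces $(f^m)'(\xi)\ge \beta\xi f(\xi)>0$ near $\xi_*$, which gives the strict monotonicity claimed in the statement. Using this monotonicity, I can split the integrand of $R$ into its negative and positive parts and bound them separately by $Cf(\xi)(\xi-\xi_*)$ and $Cf^{q}(\xi)(\xi-\xi_*)$ respectively. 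Plugging the lower bound $R(\xi)\ge -Cf(\xi)(\xi-\xi_*)$ back into the main identity yields $(f^m)'(\xi)\ge (\beta\xi_*/2)f(\xi)$ for $\xi$ close to $\xi_*$, hence $(f^{m-1})'(\xi)\ge c_0>0$, and integration from $\xi_*$ produces the matching lower bound $f(\xi)\ge c(\xi-\xi_*)^{1/(m-1)}$.

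Combining the upper bound on $R$ with this lower bound on $f$ leads to
\begin{equation*}
\frac{|R(\xi)|}{f(\xi)} \le C'(\xi-\xi_*) + C'' f^{q-1}(\xi)(\xi-\xi_*) \le C'''(\xi-\xi_*)^{(m+q-2)/(m-1)},
\end{equation*}
which tends to $0$ as $\xi\searrow\xi_*$ precisely because $m+q>2$; this establishes~\eqref{p001a}. The expansion~\eqref{p001b} then follows by integrating $(f^{m-1})'$ from $\xi_*$, using $f^{m-1}(\xi_*)=0$, and taking the $(m-1)$-th root. The most delicate point of the argument will be the bootstrap: monotonicity must be extracted first from the bare sign structure of the integrand of $R$, and only then can it be exploited to produce the lower bound $f\gtrsim (\xi-\xi_*)^{1/(m-1)}$ that turns the exponent $(m+q-2)/(m-1)$ positive and makes the hypothesis $m+q>2$ decisive in the very last step.
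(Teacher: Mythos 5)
Your proposal is correct and takes essentially the same approach as the paper: your integral quantity $R$ equals $\xi^{1-N}H$, where $H(\xi)=\xi^{N-1}(f^m)'(\xi)-\beta\xi^N f(\xi)$ is the auxiliary function the paper introduces and differentiates. Both arguments obtain monotonicity from the sign of the integrand of $R$ (equivalently $H'$) near $\xi_*$, then bootstrap the preliminary lower bound $f\gtrsim(\xi-\xi_*)^{1/(m-1)}$ to tame the term involving $f^{q-1}$ and invoke $m+q>2$ in the final step.
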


\begin{proof}
	Set $F=f^m$ and
	\begin{equation}
		H(\xi) := \xi^{N-1} F'(\xi) - \beta \xi^N f(\xi), \qquad \xi \in [\xi_*,\xi_*+\delta_*]. \label{p002}
	\end{equation}
	On the one hand, Definition~\ref{def.dcp} implies that $H(\xi_*)=0$. On the other hand, it follows from~\eqref{ODE} that
	\begin{equation}
		H'(\xi) = -(\alpha+N\beta) \xi^{N-1} f(\xi) + \xi^{\sigma+N-1} f^q(\xi), \qquad \xi\in [\xi_*,\xi_*+\delta_*]. \label{p003}
	\end{equation}
	Equivalently,
	\begin{equation*}
		H'(\xi) = \xi^{N-1} f^q(\xi) \left[ \xi^\sigma  -(\alpha+N\beta) f^{1-q}(\xi) \right], \qquad \xi\in [\xi_*,\xi_*+\delta_*],
	\end{equation*}
	and, since
	\begin{equation*}
		\lim_{\xi\searrow\xi_*} \left[ \xi^\sigma  -(\alpha+N\beta) f^{1-q}(\xi) \right] = \xi_*^\sigma>0
	\end{equation*}
	and $\xi^{N-1} f^q(\xi) >0$ for $\xi\in (\xi_*,\xi_*+\delta_*]$ by Definition~\ref{def.dcp}, we conclude that there is $\delta\in (0,\delta_*)$ such that $H'>0$ on $(\xi_*,\xi_*+\delta)$. Consequently, $H(\xi)>H(\xi_*)=0$ for $\xi\in (\xi_*,\xi_*+\delta)$ and we deduce from the positivity of $f$ on $(\xi_*,\xi_*+\delta)$ and the identity
	\begin{equation*}
		\xi^{N-1} F'(\xi) = H(\xi) + \beta \xi^N f(\xi), \qquad \xi\in (\xi_*,\xi_*+\delta),
	\end{equation*}
	that
	\begin{equation}
		F'(\xi)>0, \qquad \xi\in (\xi_*,\xi_*+\delta). \label{p004}
	\end{equation}
	Now, dropping the positive term in the right-hand side of~\eqref{p003} and integrating on $(\xi_*,\xi)$ for  $\xi\in (\xi_*,\xi_*+\delta)$, we infer from the monotonicity~\eqref{p004} of $f$ and the property $H(\xi_*)=0$ that
	\begin{align*}
		H(\xi) & = H(\xi) -H(\xi_*) \ge - (\alpha+\beta N) \int_{\xi_*}^\xi s^{N-1} f(s)\ ds \\
		& \ge -(\alpha+N\beta) f(\xi) \int_{\xi_*}^\xi s^{N-1}\ ds \ge - (\alpha+N\beta) f(\xi) \xi^{N-1} (\xi-\xi_*).
	\end{align*}
	Therefore, recalling the definition~\eqref{p002} of $H$ and dividing by $\xi^{N-1} f(\xi)>0$, we obtain
	\begin{equation*}
		\frac{m}{m-1} \big( f^{m-1} \big)'(\xi) \ge \beta\xi - (\alpha+N\beta) (\xi-\xi_*), \qquad \xi\in (\xi_*,\xi_*+\delta),
	\end{equation*}
	from which we readily deduce that
	\begin{equation}
		\liminf_{\xi\searrow\xi_*} \big( f^{m-1} \big)'(\xi) \ge \frac{m-1}{m} \beta\xi_*. \label{p005}
	\end{equation}
	According to~\eqref{p005}, we may assume that, after possibly reducing the value of $\delta>0$,
	\begin{equation*}
		\big( f^{m-1} \big)'(\xi) \ge \frac{m-1}{2m} \beta\xi_*, \qquad \xi\in (\xi_*,\xi_*+\delta).
	\end{equation*}
	Integration of the above inequality on $(\xi_*,\xi)$ for $\xi\in (\xi_*,\xi_*+\delta)$ gives
	\begin{equation}
		f^{m-1}(\xi) \ge \frac{m-1}{2m} \beta\xi_* (\xi-\xi_*), \qquad \xi\in (\xi_*,\xi_*+\delta). \label{p006}
	\end{equation}
	We next infer from~\eqref{p003} and~\eqref{p004} that, for $\xi\in (\xi_*,\xi_*+\delta)$,
	\begin{equation*}
		H(\xi) \le \int_{\xi_*}^\xi s^{\sigma+N-1} f^q(s)\ ds \le \xi^{\sigma+N-1} f^q(\xi) (\xi-\xi_*).
	\end{equation*}
	Hence, after dividing by $\xi^{N-1} f(\xi)>0$ and using~\eqref{p002} and~\eqref{p006},
	\begin{align*}
		\frac{m}{m-1} \big( f^{m-1} \big)'(\xi) & \le \beta \xi + \xi^\sigma f^{q-1}(\xi) (\xi-\xi_*)\\
		& \le \beta\xi + \left[ \frac{m-1}{2m} \beta\xi_* \right]^{(q-1)/(m-1)} \xi^\sigma (\xi-\xi_*)^{(m+q-2)/(m-1)}.
	\end{align*}
	Recalling that $m+q>2$, we may let $\xi\searrow\xi_*$ in the above inequality and conclude that
	\begin{equation*}
		\limsup_{\xi\searrow\xi_*} \big( f^{m-1} \big)'(\xi) \le \frac{m-1}{m} \beta\xi_*.
	\end{equation*}
	Gathering~\eqref{p005} and the above inequality completes the proof of~\eqref{p001a}. Integrating the latter and using $f^{m-1}(\xi_*)=0$ give~\eqref{p001b}.
\end{proof}

\begin{remark}\label{rem.2}
	As one can readily notice, the previous proof cannot be extended to the limiting case $\xi_*=0$ corresponding to profiles described in Theorem~\ref{th.2}. The local behavior~\eqref{beh.P2} will be thus directly derived from the analysis of a dynamical system, as shown in the final section of the paper.
\end{remark}

\subsection{Behavior at an interface}\label{subsec.bi}

The interface behavior for compactly supported solutions to \eqref{ODE} is given in the following statement.

\begin{lemma}\label{lem.p002}
	Let $\beta>0$ and consider a solution $f$ to \eqref{ODE} having an interface at $\xi_0>0$ in the sense of Definition~\ref{def.dcp}. Then $f$ is decreasing in a left neighborhood of $\xi_0$ with
	\begin{subequations}\label{p007}
		\begin{equation}
			\lim_{\xi\nearrow\xi_0} \big(f^{m-1} \big)'(\xi) = 0, \label{p007a}
		\end{equation}
		and, as $\xi\nearrow\xi_0$,
		\begin{equation}
			f(\xi) = \left( \frac{1-q}{\beta} \xi_0^{\sigma-1} \right)^{1/(1-q)} (\xi_0-\xi)^{1/(1-q)} + o\big( (\xi_0-\xi)^{1/(1-q)}  \big). \label{p007b}
		\end{equation}
	\end{subequations}
\end{lemma}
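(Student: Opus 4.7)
The plan is to follow the structure of the proof of Lemma~\ref{lem.p001}, with two essential differences: first, the strict monotonicity of $f$ is not automatic (in the dead-core case, it followed directly from Definition~\ref{def.dcp}(i)); second, the sharp constant in~\eqref{p007b} comes from the dominant balance in~\eqref{ODE} between the transport term $\beta \xi f'$ and the absorption $\xi^\sigma f^q$, whereas in Lemma~\ref{lem.p001} it was the diffusion $(f^m)''$ that was balanced against the transport. I expect this second point to be the main technical obstacle.

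For the monotonicity, I would argue that $f$ cannot admit a local maximum in a sufficiently small left neighborhood $(\xi_0-\delta,\xi_0)$. Indeed, at such a hypothetical maximum $\bar\xi$, $f'(\bar\xi)=0$ and $(f^m)''(\bar\xi)\le 0$, whereas~\eqref{ODE} reduces to
\[
(f^m)''(\bar\xi)=f^q(\bar\xi)\,\bigl[\bar\xi^\sigma-\alpha f^{1-q}(\bar\xi)\bigr],
\]
which is strictly positive as soon as $f(\bar\xi)$ is small enough, since $f^{1-q}(\xi)\to 0$ as $\xi\nearrow\xi_0$. Combined with $f>0$ on $[\xi_0-\delta_0,\xi_0)$ and $f(\xi_0)=0$, an elementary analysis rules out isolated minima as well and forces $f$ to be strictly decreasing on some $(\xi_0-\delta,\xi_0)$.

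Next, I would reuse the auxiliary function $H(\xi):=\xi^{N-1}F'(\xi)-\beta\xi^N f(\xi)$ with $F=f^m$ from the proof of Lemma~\ref{lem.p001}. The interface conditions give $H(\xi_0)=0$, and~\eqref{ODE} yields
\[
H'(\xi)=\xi^{N-1}f^q(\xi)\,\bigl[\xi^\sigma-(\alpha+N\beta)f^{1-q}(\xi)\bigr],
\]
which is positive in a left neighborhood of $\xi_0$ (again because $f\to 0$); hence $H<0$ there. Combining the monotonicity $F'\le 0$ with the identity $\xi^{N-1}F'=H+\beta\xi^N f$ gives $-H(\xi)\ge \beta\xi^N f(\xi)$, while dropping the negative part of $H'$ and using that $f$ is decreasing furnishes the complementary estimate $-H(\xi)\le \xi_0^{\sigma+N-1}(\xi_0-\xi)f^q(\xi)$. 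Together these deliver the crude bound $f^{1-q}(\xi)=O(\xi_0-\xi)$ and thus $f(\xi)=O((\xi_0-\xi)^{1/(1-q)})$.

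The hard part will be upgrading this crude control to the sharp asymptotic~\eqref{p007b}. Heuristically, introducing $v:=f^{1-q}$ and dividing~\eqref{ODE} by $f^q$ reveals the leading balance
\[
\frac{\beta\xi}{1-q}\,v'(\xi) + \xi^\sigma \;\approx\; 0,
\]
the remaining contributions $\alpha v$, $f^{-q}F''$ and $f^{-q}F'/\xi$ all being of lower order near $\xi_0$. This is where the assumption $m+q>2$ enters decisively: starting from the crude bound $f=O((\xi_0-\xi)^{1/(1-q)})$, each ``diffusive'' remainder carries an extra positive power $(\xi_0-\xi)^{(m+q-2)/(1-q)}$ and therefore vanishes in the limit. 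A bootstrap refining the estimates on $H$ and on $\int_\xi^{\xi_0} f^q(s)\,ds$ (matching the lower bound $-H\ge\beta\xi^N f$ with the improved upper bound for $-H$) pins down $v'(\xi)\to -(1-q)\xi_0^{\sigma-1}/\beta$, and integration with $v(\xi_0)=0$ then gives~\eqref{p007b}. Finally,~\eqref{p007a} follows at once from the identity $(f^{m-1})'(\xi)=\frac{m-1}{1-q}f^{m-2+q}(\xi)\,v'(\xi)$ together with $f^{m-2+q}\to 0$ (again by $m+q>2$) and the boundedness of $v'$.
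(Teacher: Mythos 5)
Your overall strategy is essentially the paper's: you argue monotonicity by showing that $(f^m)''>0$ at any critical point of $f$ near $\xi_0$, you introduce the same auxiliary function $H$, you extract from $-H\ge\beta\xi^N f$ and an upper bound on $-H$ the estimate $f^{1-q}(\xi)=O(\xi_0-\xi)$, and you correctly identify $m+q>2$ as the reason why the diffusive contributions are subleading in the balance $\beta\xi f'\approx\xi^\sigma f^q$. Two remarks on the intermediate step. First, the upper bound you obtain on $-H$ by simply invoking the monotonicity of $f$ inside the integral gives only $\limsup f^{1-q}(\xi)/(\xi_0-\xi)\le \xi_0^{\sigma-1}/\beta$, which overshoots the sharp constant in~\eqref{p007b} by a factor $1/(1-q)$; the paper instead writes a differential inequality for $G(\xi):=\int_\xi^{\xi_0}s^{\sigma+N-1}f^q(s)\,ds$ (estimate~\eqref{p012}) and integrates it to reach directly the sharp $\limsup$ in~\eqref{p014}. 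Your cruder bound can in fact be upgraded by one self-improvement step (feeding it back into $\int_\xi^{\xi_0}s^{\sigma+N-1}f^q\,ds$), but that is not in your text.

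The real gap is the final ``bootstrap''. You plan to prove~\eqref{p007b} first by pinning down $v'=(f^{1-q})'\to -(1-q)\xi_0^{\sigma-1}/\beta$, then deduce~\eqref{p007a} via $(f^{m-1})'=\frac{m-1}{1-q}f^{m+q-2}v'$. But when you divide~\eqref{ODE} by $f^q$ to isolate $v'$, the ``diffusive remainders'' you wish to discard are precisely $\frac{N-1}{\xi}\frac{F'}{f^q}=\frac{(N-1)m}{(m-1)\xi}f^{1-q}(f^{m-1})'$ and $F''/f^q$: to show these vanish you already need to know that $(f^{m-1})'$ is bounded (and in fact tends to $0$), which is~\eqref{p007a}. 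The crude bound $f^{1-q}=O(\xi_0-\xi)$ alone controls $f$ but not $F'$ or $F''$ to the required order, and the relation $F''/f^q=\xi^\sigma-\alpha f^{1-q}-\frac{N-1}{\xi}\frac{F'}{f^q}+\frac{\beta\xi}{1-q}v'$ is just~\eqref{ODE} rearranged, so using it is circular. The paper resolves this ordering problem by \emph{first} establishing~\eqref{p015}, i.e.~\eqref{p007a}, from the pointwise bound~\eqref{p013} via a separate argument (its proof invokes \cite[Lemma~4.9]{ILS24}), and only \emph{then} combining~\eqref{p014} with~\eqref{p015} to obtain~\eqref{p007b} (following \cite[Proposition~4.11]{ILS24}). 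Your proposal does not supply an independent route to~\eqref{p007a}, so as written the bootstrap cannot be closed; that is the missing ingredient.
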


\begin{proof}
	Since
	\begin{equation*}
		\lim_{\xi\nearrow\xi_0} \left[ \xi^\sigma - \alpha f^{1-q}(\xi) \right] = \xi_0^\sigma>0,
	\end{equation*}
	we may assume that, after possibly reducing the value of $\delta_0>0$,
	\begin{equation}
		\xi^\sigma - \alpha f^{1-q}(\xi) > 0, \qquad \xi \in [\xi_0-\delta_0,\xi_0]. \label{p008}
	\end{equation}
	Assume for contradiction that there is $\xi_1\in (\xi_0-\delta_0,\xi_0)$ such that $\big( f^m \big)'(\xi_1)=f'(\xi_1)=0$. We then infer from~\eqref{ODE} and~\eqref{p008} that
	\begin{equation*}
		F''(\xi_1)  = - \alpha f(\xi_1) + \xi_1^\sigma f^q(\xi_1) = f^q(\xi_1) \left[ \xi_1^\sigma - \alpha f^{1-q}(\xi_1) \right]>0,
	\end{equation*}
	recalling that $f(\xi_1)>0$. Consequently, $f'$ is positive in a right neighborhood of $\xi_1$ and we set
	\begin{equation*}
		\xi_2 := \inf\left\{ \xi\in (\x_1,\xi_0)\ :\ f'(\xi)=0 \right\} > \xi_1.
	\end{equation*}	
	Since $f(\xi)\ge f(\xi_1)>0$ for $\xi\in [\xi_1,\xi_2]$, we necessarily have $\xi_2<\xi_0$ and we use again~\eqref{ODE}, \eqref{p008}, and the positivity of $f(\xi_2)$ to deduce that
	\begin{equation*}
		F''(\xi_2) = f^q(\xi_2) \left[ \xi_2^\sigma - \alpha f^{1-q}(\xi_2) \right]>0.
	\end{equation*}
	However, since $F'(\xi)\ge 0=F'(\xi_1)$ for $\xi\in (\xi_1,\xi_2)$, there holds $F''(\xi_2)\le 0$, and a contradiction. Consequently,
	\begin{equation}
		F'<0 \;\;\text{ and }\;\; f'<0 \;\;\text{ on }\;\; (\xi_0-\delta_0,\xi_0). \label{p009}
	\end{equation}
	
	Next, using once more the function $H$ defined by
	\begin{equation*}
		H(\xi) := \xi^{N-1} F'(\xi) - \beta \xi^N f(\xi), \qquad \xi \in [\xi_0-\delta_0,\xi_0], 
	\end{equation*}
	with derivative
	\begin{equation*}
		H'(\xi) = -(\alpha+N\beta) \xi^{N-1} f(\xi) + \xi^{\sigma+N-1} f^q(\xi), \qquad \xi\in [\xi_0-\delta_0,\xi_0], 
	\end{equation*}
	we infer from the non-negativity and monotonicity~\eqref{p009} of $f$ that, for $\xi\in (\xi_0-\delta_0,\xi_0)$,
	\begin{align}
		\beta \xi^N f(\xi) & \le -\xi^{N-1} F'(\xi) + \beta \xi^N f(\xi) = H(\xi_0) - H(\xi) \nonumber\\
		& \le G(\xi) := \int_\xi^{\xi_0} s^{\sigma+N-1} f^q(s)\ ds. \label{p012}
	\end{align}
	We next deduce from~\eqref{p012} that
	\begin{equation*}
		- \beta^q \frac{G'(\xi)}{\xi^{N(1-q)+\sigma-1}} = \big( \beta \xi^N f(\xi) \big)^q \le G^q(\xi) ;
	\end{equation*}
	that is,
	\begin{equation*}
		- \frac{1}{1-q} \big( G^{1-q} \big)'(\xi) \le \beta^{-q} \xi^{N(1-q)+\sigma-1}, \qquad \xi\in [\xi_0-\delta_0,\xi_0].
	\end{equation*}
	Integrating the above inequality over $(\xi,\xi_0)$ for $\xi\in(\xi_0-\delta_0,\xi_0)$, we obtain
	\begin{equation*}
		\frac{G^{1-q}(\xi)}{1-q} \le \frac{\xi_0^{N(1-q)+\sigma} - \xi^{N(1-q)+\sigma}}{[N(1-q)+\sigma] \beta^q},
	\end{equation*}
	whence, by~\eqref{p012},
	\begin{equation}
		f^{1-q}(\xi) \le \frac{(1-q)}{\beta} \frac{\xi_0^{N(1-q)+\sigma} - \xi^{N(1-q)+\sigma}}{N(1-q)+\sigma}\xi^{-N(1-q)}, \qquad \xi\in (\xi_0-\delta_0,\xi_0). \label{p013}
	\end{equation}
	In particular,
	\begin{equation}
		\limsup_{\xi\nearrow\xi_0} \frac{f^{1-q}(\xi)}{\xi_0-\xi} \le \frac{1-q}{\beta} \xi_0^{\sigma-1}. \label{p014}
	\end{equation}
	To complete the proof, we observe that, thanks to~\eqref{p013}, we may argue as in \cite[Lemma~4.9]{ILS24} to prove that
	\begin{equation}
		\limsup_{\xi\nearrow\xi_0} \big( f^{m-1} \big)'(\xi) = 0. \label{p015}
	\end{equation}
	We next combine~\eqref{p014} and~\eqref{p015} and proceed as in the proof of \cite[Proposition~4.11]{ILS24} to establish~\eqref{p007}.
\end{proof}

\subsection{An alternative formulation}\label{subsec.aaf}

In the next lines, we introduce a transformation which maps solutions to~\eqref{ODE} presenting dead core and interface onto trajectories of an autonomous dynamical system, allowing thus for the employment of techniques specific to the theory of dynamical systems. In order to simplify the notation, let us fix some constants that will come frequently into play in the subsequent analysis:
\begin{equation*}
\mu:= m+q-2>0, \qquad \nu:= \frac{m-1}{m+q-2} = 1 + \frac{1-q}{\mu}>1. 
\end{equation*}
Let $\beta>0$ and consider a solution $f$ to~\eqref{ODE} with a dead core at $\xi=\xi_*>0$, an interface at $\xi_0>\xi_*$, and satisfying $f>0$ on $(\xi_*,\xi_0)$. Borrowing ideas from \cite{IS22a, IS22b, J53}, we define the functions
\begin{equation}\label{change.large}
	\begin{split}
		\mathcal{Y}_f(\xi) & := \frac{2m}{\alpha}\xi^{-1} f^{m-2}(\xi) f'(\xi), \qquad \xi\in (\xi_*,\xi_0), \\
		\mathcal{Z}_f(\xi) & := \left[\frac{2m}{\alpha}\right]^{1/\nu} \xi^{-2/\nu} f^{\mu}(\xi), \qquad \xi\in (\xi_*,\xi_0),
	\end{split}
\end{equation}
with $\alpha=2\beta/(m-1)$, together with the new independent variable
\begin{equation}\label{PSvar.large} \eta_f(\xi)=\frac{\alpha}{2m}\int_{(\xi_0+\xi_*)/2}^{\xi}\frac{s}{f^{m-1}(s)}\,ds=\int_{(\xi_0+\xi_*)/2}^{\xi}\frac{ds}{s\mathcal{Z}_f(s)^{\nu}}, \qquad \xi\in (\xi_*,\xi_0).
\end{equation}
Since $m+q>2$, it follows from Lemmas~\ref{lem.p001} and~\ref{lem.p002} that
\begin{equation*}
	\lim_{\xi\searrow \xi_*} \eta_f(\xi) = -\infty, \qquad \lim_{\xi\nearrow\xi_0} \eta_f(\xi) = \infty, 
\end{equation*}
so that $\eta_f$ maps $(\xi_*,\xi_0)$ onto $\mathbb{R}$. Introducing $(Y_f,Z_f)$ defined by
\begin{equation*}
	(\mathcal{Y}_f,\mathcal{Z}_f)=(Y_f\circ\eta_f,Z_f\circ\eta_f) 
\end{equation*}
and setting
\begin{equation*}
	K:=\frac{1}{m}\left[\frac{2m}{\alpha}\right]^{(m-q)/(m-1)}
\end{equation*}
we see that $(Y_f,Z_f)$ solves
\begin{subequations}\label{p017}
\begin{equation}
	\left\{
	\begin{split}
		\frac{dY_f}{d\eta} &= (m-1)Y_f+KZ_f-Y_f^2-2Z_f^{\nu}-NY_fZ_f^{\nu}, \qquad \eta\in\mathbb{R},\\[1mm]
		\frac{dZ_f}{d\eta} &= \mu Y_fZ_f-\frac{2}{\nu} Z_f^{\nu+1},  \qquad \eta\in\mathbb{R}.
	\end{split} \label{p017a}
	\right.
\end{equation}
Observe that we have transformed the non-autonomous second order ordinary differential equation~\eqref{ODE} for $f$ to an autonomous system of ordinary differential equations for $(Y_f,Z_f)$. In addition, $Z_f>0$ in $\mathbb{R}$ and we deduce from Lemmas~\ref{lem.p001} and~\ref{lem.p002} that
\begin{equation}
	\lim_{\eta\to -\infty} (Y_f,Z_f)(\eta) = (m-1,0) \label{p017b}
\end{equation}
and
\begin{equation}
	\lim_{\eta\to \infty} (Y_f,Z_f)(\eta) = (0,0). \label{p017c}
\end{equation}
\end{subequations}
In other words, $(Y_f,Z_f)$ is an heteroclinic orbit of a two-dimensional autonomous dynamical system connecting the critical points $(m-1,0)$ and $(0,0)$. Thus, instead of constructing $f$ as a solution to~\eqref{ODE} with dead core and interface, we will rather construct $(Y_f,Z_f)$ and the next section is devoted to a detailed analysis of the corresponding dynamical system.

\section{An auxiliary dynamical system}\label{sec.dynamic}

Let $K>0$. For $(y,z)\in \mathbb{R}^2$, we define $\mathbf{R}(y,z) = (R_1,R_2)(y,z)$ by
\begin{align*}
	R_1(y,z) & := (m-1) y + K z - y^2 - (2+Ny) z_+^\nu, \\
	R_2(y,z) & := \mu z \left( y - \frac{2}{m-1} z_+^\nu \right),
\end{align*}
where $z_+ := \max\{z,0\}$. Since $\nu>1$, the vector field $\mathbf{R}$ belongs to $C^1(\mathbb{R}^2,\mathbb{R}^2)$ with
\begin{equation*}
	\partial_y R_1(y,z) = (m-1) - 2 y - N z_+^\nu, \qquad \partial_z R_1(y,z) = K - \nu N y Z_+^{\nu-1},
\end{equation*}
\begin{equation*}
	\partial_y R_2(y,z) = \mu z, \qquad \partial_z R_2(y,z) = \mu y - 2 \frac{\nu+1}{\nu} z_+^\nu,
\end{equation*}
for $(y,z)\in \mathbb{R}^2$. We infer from the Cauchy-Lipschitz theorem that, for any $(Y_0,Z_0)\in \mathbb{R}^2$, there is a unique maximal solution
\begin{equation*}
	(Y,Z) = \Phi(\cdot,Y_0,Z_0) \in C^1\big((\eta^-(Y_0,Z_0),\eta^+(Y_0,Z_0));\mathbb{R}^2 \big)
\end{equation*}
to
\begin{subequations}\label{p018}
	\begin{align}
	\frac{d}{d\eta}(Y,Z) & = \mathbf{R}(Y,Z), \label{p018a}\\
	(Y,Z)(0) & = (Y_0,Z_0), \label{p018b}
	\end{align}
\end{subequations}
with $\eta^-(Y_0,Z_0)<0<\eta^+(Y_0,Z_0)$. It readily follows from~\eqref{p018} and the comparison principle that
\begin{equation}
	\left\{
	\begin{split}
		Z(\eta)>0 \;\;\text{ for }\;\; \eta\in (\eta^-(Y_0,Z_0),\eta^+(Y_0,Z_0)) \;\;\text{ if }\;\; Z_0>0, \\
		Z(\eta)<0 \;\;\text{ for }\;\; \eta\in (\eta^-(Y_0,Z_0),\eta^+(Y_0,Z_0)) \;\;\text{ if }\;\; Z_0<0.
	\end{split} \label{p019}
	\right.
\end{equation}
We next observe that the vector field $\mathbf{R}$ has three critical points
\begin{equation*}
	P_0=(0,0), \qquad P_1=(m-1,0), \qquad P_2= \left(\frac{2Z_K^\nu}{m-1},Z_K \right),
\end{equation*}
where
\begin{equation}\label{xp2}
	Z_K := \left( \frac{K(m-1)^2}{2[N(m-1)+2]}\right)^{\mu/(m-q)}.
\end{equation}
According to~\eqref{p017} and the discussion at the end of Section~\ref{subsec.aaf}, we are interested in complete orbits of~\eqref{p018} with a positive second component and connecting $P_1$ to $P_0$, which are thus the two critical points of interest. Let us start with the analysis of the local behavior near $P_1$.

\subsection{Local behavior near $P_1$}\label{subsec.lb1}

We define the unstable manifold $W_u(P_1)$ as usual by
\begin{equation*}
	W_u(P_1) := \left\{ (Y_0,Z_0)\in\mathbb{R}^2\ :\ \lim_{\eta\to -\infty} \Phi(\eta,Y_0,Z_0) = P_1 \right\}.
\end{equation*}{\tiny }

\begin{lemma}\label{lem.P1}
	The critical point $P_1$ is a saddle point and the intersection of its unstable manifold $W_u(P_1)$ and the positive cone $(0,\infty)^2$ of $\mathbb{R}^2$ is a single orbit
	\begin{equation*}
		l_1(K) = W_u(P_1)\cap (0,\infty)^2 = \{(Y_{1,K},Z_{1,K})(\eta)\ :\ \eta\in (-\infty,\eta_{1,K}^+)\}
	\end{equation*}
	of~\eqref{p018} with $\eta_{1,K}^+ \in (0,\infty]$. Moreover,
	\begin{equation*}
		Z_{1,K}^\nu \in L^1(-\infty,0) \;\;\text{ and }\;\; \lim_{\eta\to-\infty} \frac{Y_{1,K}(\eta)+1-m}{Z_{1,K}(\eta)} = \frac{K}{(m+q-1)(m-1)}.
	\end{equation*}
\end{lemma}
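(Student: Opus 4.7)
The plan is to apply the stable/unstable manifold theorem at the saddle $P_1=(m-1,0)$ and then to identify which branch of the resulting one-dimensional invariant manifold enters the open positive cone $(0,\infty)^2$. First I would linearize $\mathbf{R}$ at $P_1$. Using the partial derivative formulas recorded just after the definition of $\mathbf{R}$, and noting that the terms proportional to $z_+^{\nu-1}$ vanish at $z=0$ because $\nu>1$ (this is precisely where the standing assumption $m+q>2$ is used, since it ensures $\mathbf{R}\in C^1$ in a neighborhood of $P_1$), one obtains the upper-triangular Jacobian
\begin{equation*}
J(P_1)=\begin{pmatrix}-(m-1) & K \\ 0 & \mu(m-1)\end{pmatrix},
\end{equation*}
whose eigenvalues $\lambda_-=-(m-1)<0$ and $\lambda_+=\mu(m-1)>0$ have opposite sign. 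Hence $P_1$ is a hyperbolic saddle point. A direct computation gives the unstable eigenvector $v=(K,(m-1)(m+q-1))$, both of whose components are strictly positive.

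Second, the local unstable manifold theorem produces a one-dimensional $C^1$ invariant manifold through $P_1$ tangent to $\mathrm{span}(v)$, which splits into two branches issuing from $P_1$ tangent to $+v$ and $-v$ respectively. Because both components of $v$ are positive, the $+v$ branch enters $\{Y>m-1,\ Z>0\}\subset(0,\infty)^2$, while the $-v$ branch enters $\{Z<0\}$, a half-plane which is forward invariant by~\eqref{p019} and consequently never re-enters the positive cone. Since any orbit of $W_u(P_1)$ must, as $\eta\to -\infty$, coincide with one of these two local branches, only the $+v$ branch can contribute points to $W_u(P_1)\cap(0,\infty)^2$. I would then set $\eta_{1,K}^+\in(0,\infty]$ to be the first exit time of this orbit from $(0,\infty)^2$ (positive since the orbit enters the open cone immediately from $P_1$), and label its components $(Y_{1,K},Z_{1,K})$ so that
\begin{equation*}
l_1(K)=\{(Y_{1,K},Z_{1,K})(\eta)\,:\,\eta\in(-\infty,\eta_{1,K}^+)\}.
\end{equation*}

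Third, the stated asymptotics follow directly from the tangency to the unstable eigenvector. A standard refinement of the unstable manifold theorem gives the expansion
\begin{equation*}
\big(Y_{1,K}(\eta)-(m-1),\,Z_{1,K}(\eta)\big)=C\,e^{\lambda_+\eta}\,v+o\big(e^{\lambda_+\eta}\big)\qquad\text{as }\eta\to-\infty,
\end{equation*}
for some $C>0$, from which the ratio $K/[(m-1)(m+q-1)]$ is obtained by dividing the two scalar components. For the integrability property, the same expansion yields $Z_{1,K}^\nu(\eta)=O\big(e^{\nu\mu(m-1)\eta}\big)$ as $\eta\to-\infty$, and since $\nu\mu(m-1)=(m-1)^2>0$, we deduce $Z_{1,K}^\nu\in L^1(-\infty,0)$.

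I do not foresee a serious conceptual obstacle. The only delicate points are (i) the $C^1$-regularity of $\mathbf{R}$ at $P_1$, for which the hypothesis $\nu>1$ is crucial, and (ii) the sign bookkeeping that selects the correct branch of $W_u(P_1)$, which is handled by the positivity of both components of $v$ together with the invariance of $\{Z<0\}$ recorded in~\eqref{p019}. The one small extra point that I would need to verify carefully is the equality $l_1(K)=W_u(P_1)\cap(0,\infty)^2$, which also requires that the $+v$ orbit, once it exits the positive cone at $\eta_{1,K}^+$, cannot re-enter it; this should follow from a quick inspection of the sign of $\mathbf{R}$ on the boundaries $\{Y=0,\,Z>0\}$ and $\{Z=0,\,Y>0\}$.
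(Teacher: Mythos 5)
Your proposal is correct and follows essentially the same route as the paper: compute the Jacobian at $P_1$, identify the saddle structure with unstable eigenvector $\mathbf{e}_2=(K,(m+q-1)(m-1))$, and use the unstable manifold theorem together with the invariance of $\{Z<0\}$ from~\eqref{p019} to single out the branch entering $(0,\infty)^2$. The only real difference is in how you extract the two asymptotic properties: you invoke the sharper statement that the unstable-manifold orbit satisfies $(Y-(m-1),Z)=Ce^{\lambda_+\eta}\mathbf{e}_2+o(e^{\lambda_+\eta})$, which yields both the ratio limit and $Z^\nu\in L^1(-\infty,0)$ in one stroke; the paper instead reads the limit directly off the $C^1$ graph parametrization $(Y,Z)=P_1+\zeta\mathbf{e}_2+h_1(\zeta)\mathbf{e}_1$ with $h_1(0)=h_1'(0)=0$ (never needing the exact rate of decay of $\zeta$), and obtains integrability separately from $\frac{d}{d\eta}\ln Z\to\mu(m-1)>0$. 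Your version is a bit slicker but leans on a refinement of the manifold theorem that the paper deliberately avoids; both are valid. The re-entry question you flag at the end (whether the orbit, once it exits $(0,\infty)^2$, can come back, which is needed for the literal set equality $l_1(K)=W_u(P_1)\cap(0,\infty)^2$) is not addressed in the paper's proof either, so it is not a gap specific to your argument; it would indeed merit a line, e.g.\ by noting that the orbit stays in $\{Z>0\}$ by~\eqref{p019} and inspecting the sign of $R_1$ on $\{Y=0,\,Z>0\}$ along the return path.
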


\begin{proof}
	Since
	\begin{equation*}
		M_1 = D\mathbf{R}(P_1)=
		\begin{pmatrix}
			-(m-1) & K \\
			0 & \mu(m-1)
		\end{pmatrix}
	\end{equation*}
	has two eigenvalues $\lambda_1=-(m-1)<0<\lambda_2=\mu(m-1)>0$ with associated eigenvectors $\mathbf{e}_1=(1,0)$ and $\mathbf{e}_2=(K,(m+q-1)(m-1))$, the critical point~$P_1$ is a saddle point and thus an hyperbolic critical point. According to the stable manifold theorem, see \cite[Theorem~19.11]{Amann} for instance, there are an open neighborhood $\mathcal{U}_1$ of $P_1$ in $\mathbb{R}^2$, an open interval $\mathcal{J}_1$ containing $0$, and $h_1\in C^1(\mathcal{J}_1;\mathcal{U}_1)$ such that $h_1(0) = h_1'(0)=0$, and
	\begin{equation*}
		\left\{ P_1 + \zeta \mathbf{e}_2 + h_1(\zeta) \mathbf{e}_1\ :\ \zeta\in (-\delta_1,\delta_1) \right\} \subset W_u^{\mathcal{U}_1}(P_1) \subset \left\{ P_1 + \zeta \mathbf{e}_2 + h_1(\zeta) \mathbf{e}_1\ :\ \zeta\in \mathcal{J}_1 \right\}
	\end{equation*}
	where
	\begin{equation*}
		W_u^{\mathcal{U}_1}(P_1) := \big\{ (Y_0,Z_0)\in W_u(P_1)\ :\ \Phi(\eta,Y_0,Z_0)\in \mathcal{U}_1 \;\text{ for all }\; \eta\in (-\infty,0) \big\}.
	\end{equation*}
	We first observe that, since $h_1(0)=h_1'(0)=0$, $P_1+\zeta \mathbf{e}_2 + h_1(\zeta) \mathbf{e}_1 = \big( m-1+K\zeta+h_1(\zeta),(m+q-1)(m-1)\zeta \big)$ belongs to $(0,\infty)^2$ for $\zeta>0$ sufficiently small. Therefore,
	\begin{equation*}
		W_u^{\mathcal{U}_1}(P_1) \cap (0,\infty)^2 \ne\emptyset.
	\end{equation*}
	Consider next $(Y_0,Z_0)\in W_u(P_1)\cap (0,\infty)^2$ and set $(Y,Z)=\Phi(\cdot,Y_0,Z_0)$. We infer from~\eqref{p018} and~\eqref{p019} that $Z(\eta)>0$ for $\eta\in (-\infty,0)$ and
	\begin{equation*}
		\frac{d}{d\eta}\ln{Z(\eta)} = \mu \left( Y(\eta) - \frac{2}{m-1} Z^\nu(\eta) \right), \qquad \eta\in (-\infty,0).
	\end{equation*}
	Since $(Y_0,Z_0)\in W_u(P_1)$, we deduce from the above identity that
	\begin{equation*}
		\lim_{\eta\to -\infty} \frac{d}{d\eta}\ln{Z(\eta)}  = \mu(m-1)>0.
	\end{equation*}
	Consequently, there is $\eta_1\in (-\infty,0)$ such that, for $\eta\in (-\infty,\eta_1)$,
	\begin{equation*}
		\frac{\mu(m-1)}{2} \le \frac{d}{d\eta}\ln{Z(\eta)}.
	\end{equation*}
	Hence, after integration,
	\begin{equation*}
		Z^\nu(\eta) \le Z^\nu(\eta_1) e^{(m-1)^2(\eta-\eta_1)/2}, \qquad \eta\in (-\infty,\eta_1),
	\end{equation*}
	from which we conclude that
	\begin{equation*}
		Z^\nu \in L^1(-\infty,0). 
	\end{equation*}
	
	Let us finally consider $(Y_0,Z_0)\in W_u(P_1)\cap (0,\infty)^2$ and $(\tilde{Y}_0,\tilde{Z}_0)\in W_u(P_1)\cap (0,\infty)^2$. There is $\eta_0<0$ such that
	\begin{align*}
		(Y,Z)(\eta) & := \Phi(\eta,Y_0,Z_0) \in W_u^{\mathcal{U}_1}(P_1)\cap (0,\infty)^2, \qquad \eta\in (-\infty,\eta_0), \\
		(\tilde{Y},\tilde{Z})(\eta) & := \Phi(\eta,\tilde{Y}_0,\tilde{Z}_0) \in W_u^{\mathcal{U}_1}(P_1)\cap (0,\infty)^2, \qquad \eta\in (-\infty,\eta_0).
	\end{align*}
	There are then $(\zeta_0,\tilde{\zeta}_0)\in \mathcal{J}_1^2$ such that
	\begin{equation*}
		(Y,Z)(\eta_0) = P_1 + \zeta_0 \mathbf{e}_2 + h_1(\zeta_0) \mathbf{e}_1 \;\;\text{ and }\;\; (\tilde{Y},\tilde{Z})(\eta_0) = P_1 + \tilde{\zeta}_0 \mathbf{e}_2 + h_1(\tilde{\zeta}_0) \mathbf{e}_1,
	\end{equation*}
	and we may assume that $\zeta_0 \le \tilde{\zeta}_0$ without loss of generality. Since $\big( \tilde{Y},\tilde{Z} \big)(\cdot+\eta_0)$ belongs to $W_u^{\mathcal{U}_1}(P_1)$, there is $\tilde{\eta}_0\le \eta_0$ such that
	\begin{equation*}
		\big( \tilde{Y}, \tilde{Z} \big)(\tilde{\eta}_0) = P_1 + \zeta_0\mathbf{e}_2 + h_1(\zeta_0) \mathbf{e}_1 = (Y,Z)(\eta_0).
	\end{equation*}
	Consequently, $\big( \tilde{Y},\tilde{Z} \big)(\cdot,\tilde{\eta}_0-\eta_0) = (Y,Z)$ and $W_u(P_1)\cap (0,\infty)^2$ contains a single orbit $(Y_{1,K},Z_{1,K})$ which can be assumed to be defined on $(-\infty,\eta_{1,K}^+)$ for some $\eta_{1,K}^+ \in (0,\infty]$.
	
	Finally, since $(Y_{1,K},Z_{1,K})$ belongs to $W_u(P_1)\cap (0,\infty)^2$, there are $\bar{\eta}<0$ and $\bar{\zeta}\in C((-\infty,\bar{\eta}))$ such that, for any $\eta\in (-\infty,\bar{\eta})$, $\zeta(\eta)\in (-\delta_1,\delta_1)$ and
	\begin{align*}
		(Y_{1,K},Z_{1,K})(\eta) & = P_1 + \bar{\zeta} \mathbf{e}_2 + h_1(\bar{\zeta}(\eta)) \mathbf{e}_1 \\
		& = \big( m-1+K \bar{\zeta}(\eta) + h_1(\bar{\zeta}(\eta)),(m+q-1)(m-1) \bar{\zeta}(\eta) \big).
	\end{align*}
	Hence, recalling that $h_1(0) = h_1'(0)=0$,
	\begin{equation*}
		\lim_{\eta\to-\infty} \frac{Y_{1,K}(\eta)+1-m}{Z_{1,K}(\eta)} = \lim_{\eta\to -\infty} \frac{K \bar{\zeta}(\eta) + h_1(\bar{\zeta}(\eta))}{(m+q-1)(m-1) \bar{\zeta}(\eta)} = \frac{K}{(m+q-1)(m-1)},
	\end{equation*}
	and the proof is complete.
\end{proof}

\subsection{Local behavior near $P_0$}\label{subsec.lb2}

We next turn to the local analysis in a neighborhood of $P_0$.

\begin{lemma}\label{lem.P0}
	The critical point $P_0$ has a one-dimensional center manifold $W_c(P_0)$ and a one-dimensional unstable manifold $W_u(P_0) = (-\infty,m-1)\times \{0\}$. Moreover, $W_c(P_0)\cap \big( \mathbb{R}\times (0,\infty) \big)$ is uniquely determined and contains a single orbit
	\begin{equation*}
		l_0(K) = W_c(P_0)\cap \big( \mathbb{R}\times (0,\infty) \big) = \{(Y_{0,K},Z_{0,K})(\eta)\ :\ \eta\in (\eta_{0,K}^-,\infty)\}
	\end{equation*}
	of~\eqref{p018} with $\eta_{0,K}^- \in [-\infty,0)$. Moreover,
	\begin{equation*}
		Z_{0,K}^\nu \in L^1(0,\infty), \quad \lim_{\eta\to\infty} Z_{0,K}(\eta)=0, \quad \lim_{\eta\to\infty} \frac{Y_{0,K}(\eta)}{Z_{0,K}(\eta)} = -\frac{K}{m-1}.
	\end{equation*}
\end{lemma}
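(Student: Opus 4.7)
I would mirror the strategy of Lemma~\ref{lem.P1}, replacing the hyperbolic invariant manifold theorem by the center manifold theorem since one of the eigenvalues at $P_0$ vanishes. A direct computation, in which $\nu>1$ ensures that the $Z^\nu$ and $Z^{\nu-1}$ contributions vanish at the origin, gives
$$M_0 := D\mathbf{R}(P_0) = \begin{pmatrix} m-1 & K \\ 0 & 0 \end{pmatrix},$$
with eigenvalues $\lambda_1 = m-1>0$ and $\lambda_2 = 0$, associated respectively to eigenvectors $\mathbf{e}_1 = (1,0)$ and $\mathbf{e}_2 = (-K, m-1)$. To identify $W_u(P_0)$, I observe that $\{Z=0\}$ is invariant under~\eqref{p018a} since $R_2(y,0)=0$, and on it the reduced scalar ODE reads $Y' = Y(m-1-Y)$. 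A phase-line analysis of this logistic-type equation shows that $Y(\eta)\to 0$ as $\eta\to -\infty$ precisely when $Y(0)\in (-\infty, m-1)$. Since $W_u(P_0)$ is one-dimensional and tangent to $\mathbf{e}_1$, it lies in $\{Z=0\}$, hence coincides with $(-\infty, m-1) \times \{0\}$.

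For the center direction, I would invoke the Center Manifold Theorem to produce a locally invariant $C^1$ manifold $W_c^{\mathrm{loc}}(P_0)$ through $P_0$, tangent to $\mathbf{e}_2$. Because the second component of $\mathbf{e}_2$ does not vanish, $W_c^{\mathrm{loc}}(P_0)$ is locally the graph $Y = \Phi(Z)$ of a $C^1$ function on an open interval around $0$, with $\Phi(0)=0$ and $\Phi'(0) = -K/(m-1)$. Inserting $Y = \Phi(Z)$ into the second equation of~\eqref{p018a} and using $\nu>1$ yields the reduced scalar flow
$$\frac{dZ}{d\eta} = \mu Z \left( \Phi(Z) - \frac{2}{m-1} Z_+^\nu \right) = -\frac{\mu K}{m-1} Z^2 + o(Z^2) \quad \text{as } Z \searrow 0.$$
Thus $dZ/d\eta<0$ for $Z>0$ small, so forward integration produces an orbit $(Y_{0,K}, Z_{0,K})$ with $Z_{0,K}$ strictly decreasing to $0$ as $\eta\to\infty$. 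A standard comparison with $z' = -cz^2$ then gives $Z_{0,K}(\eta) = O(\eta^{-1})$ at infinity, whence $Z_{0,K}^\nu \in L^1(0,\infty)$ since $\nu>1$. The asymptotic slope is read off directly from the graph representation, since $Y_{0,K}/Z_{0,K} = \Phi(Z_{0,K})/Z_{0,K} \to \Phi'(0) = -K/(m-1)$. Running the orbit backward, it eventually leaves the neighborhood where the graph representation is valid and is continued as a solution of~\eqref{p018} until its maximal negative existence time $\eta_{0,K}^-\in[-\infty, 0)$.

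The main obstacle is the uniqueness claim, since center manifolds need not be unique as sets. To handle this, I would argue directly on orbits: any trajectory of~\eqref{p018} with $Z>0$ that converges to $P_0$ forward in time must asymptotically have vanishing component along $\mathbf{e}_1$, otherwise this component would grow at rate $e^{(m-1)\eta}$ under the linearized flow, contradicting convergence. Such an orbit is therefore trapped on every local center manifold through $P_0$, so that $W_c(P_0)\cap(\mathbb{R}\times(0,\infty))$ is independent of the choice of $W_c(P_0)$. A Gronwall-type estimate on the difference of two candidate orbits, based on the reduced dynamics $Z'\sim -(\mu K/(m-1))Z^2$, then collapses this set to a single orbit modulo time translation, and this orbit is precisely $l_0(K)$. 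Making the ``trapping on the center manifold'' argument airtight---separating the exponentially unstable direction from the merely algebraically contractive center direction---is the most delicate step of the proof.
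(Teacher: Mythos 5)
Your outline follows the same skeleton as the paper: compute $D\mathbf{R}(P_0)$ with eigenvalues $m-1$ and $0$, identify $W_u(P_0)$ inside the invariant line $\{Z=0\}$ via the logistic reduction, invoke the center-manifold theorem to obtain a graph $Y=\Phi(Z)$ tangent to $\mathbf{e}_2$, derive $dZ/d\eta = -\frac{\mu K}{m-1}Z^2 + o(Z^2)$ for $Z>0$ small, and extract the decay $Z=O(1/\eta)$ (hence $Z^\nu\in L^1$ since $\nu>1$) and the ratio $Y/Z\to -K/(m-1)$ directly from the graph. All of that matches the paper's proof in both structure and substance. Where you diverge is the uniqueness issue. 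The paper does not derive the center-manifold uniqueness from scratch: it invokes a specific result of Sijbrand (Theorem~3.2'(i)) stating that a forward-bounded orbit converging to the origin is contained in every $C^1$ center manifold, which immediately forces any two candidate center manifolds to agree on $\mathbb{R}\times(0,\infty)$; then the single-orbit claim is obtained exactly as in Lemma~\ref{lem.P1}, by matching base points in the graph parametrization and shifting time, without any Gronwall estimate. Your proposed replacement---show directly that the exponentially unstable $\mathbf{e}_1$-component must vanish for forward-convergent orbits, hence the orbit is trapped on every center manifold, then collapse to a single orbit---is the right fact to use, and you correctly flag it as the one step whose details need care; the paper simply outsources that care to Sijbrand. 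So the approaches are equivalent in content, with the paper being more economical by leaning on a citable theorem. One small remark: for the single-orbit conclusion a Gronwall argument is overkill; once both orbits lie on the same invariant graph near $P_0$, a time shift identifies them, as the paper does in Lemma~\ref{lem.P1}.
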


\begin{proof}
	We first observe that
	\begin{equation*}
		M_0 = D\mathbf{R}(P_0)=
		\begin{pmatrix}
			m-1 & K \\
			0 & 0
		\end{pmatrix}
	\end{equation*}
	has two eigenvalues $\lambda_1=m-1>0$ and $\lambda_2=0$ with associated eigenvectors $\mathbf{e}_1=(1,0)$ and $\mathbf{e}_2=(-K,m-1)$. Therefore, by the center manifold theorem, see \cite[Section~2.7]{Pe}, there exists a $C^1$-smooth one-dimensional center manifold $W_c(P_0)$ and a one-dimensional unstable manifold $W_u(P_0)$ for the flow $\Phi$ associated to~\eqref{p018}.
	
	On the one hand, according to \cite[Definition~1.1 \& Section~2]{Sij}, there are $\delta_0>0$ and $h_0\in C^1(-\delta_0,\delta_0)$ satisfying $h_0(0)=h_0'(0)=0$ such that
	\begin{equation}
		W_c(P_0) = \left\{ \frac{\zeta}{m-1} \mathbf{e}_2 + h_0(\zeta) \mathbf{e}_1\ :\ \zeta\in (-\delta_0,\delta_0) \right\} \label{p021}
	\end{equation}
	and we may assume without loss of generality that
	\begin{equation}
		|h_0(\zeta)| \le \frac{K}{2(m-1)} |\zeta|, \qquad \zeta\in (-\delta_0,\delta_0), \label{p024}
	\end{equation}
	due to $h_0(0)=0$. Consider now $(Y_0,Z_0)\in W_c(P_0)\cap \big( \mathbb{R}\times (0,\infty) \big)$. Since both $W_c(P_0)$ and $\mathbb{R}\times (0,\infty)$ are positively invariant for the flow $\Phi$ associated to~\eqref{p018}, we obtain that
	\begin{equation}
		(Y,Z)(\eta) = \Phi(\eta,Y_0,Z_0) \in W_c(P_0)\cap \big( \mathbb{R}\times (0,\infty) \big), \qquad \eta\in [0,\eta^+(Y_0,Z_0)). \label{p022}
	\end{equation}
	We then infer from~\eqref{p021} and~\eqref{p022} that
	\begin{equation}
		Y(\eta) = - \frac{K}{m-1} Z(\eta) + h_0(Z(\eta)), \quad 0<Z(\eta)<\delta_0, \qquad \eta\in [0,\eta^+(Y_0,Z_0)), \label{p023}
	\end{equation}
	which readily implies that
	\begin{equation}
		 \eta^+(Y_0,Z_0)=\infty \;\;\text{ and }\;\; \lim_{\eta\to\infty} \frac{Y(\eta)}{Z(\eta)} = - \frac{K}{m-1}. \label{p025}
	\end{equation}
	Moreover, by~\eqref{p018}, \eqref{p024}, and~\eqref{p023},
	\begin{equation*}
		\frac{dZ}{d\eta} = - \frac{\mu K}{m-1} Z^2 + \mu Z h_0(Z) - \frac{2\mu}{m-1} Z^{\nu+1} \le - \frac{\mu K}{2(m-1)} Z^2, 
	\end{equation*}
	from which we deduce that $Z$ is decreasing on $[0,\infty)$ and satisfies
	\begin{equation}
		\lim_{\eta\to \infty} Z(\eta) = 0, \qquad \lim_{\eta\to\infty} \left( \frac{1}{Z^2(\eta)} \frac{dZ}{d\eta}(\eta) \right) = - \frac{\mu K}{m-1}, \label{p027}
	\end{equation}
	using as well the positivity~\eqref{p023} of $Z$, $h_0(0)=h_0'(0)=0$, and $\nu>1$. It follows in particular from~\eqref{p027} that $Z(\eta)\sim (m-1)/(\mu K \eta)$ as $\eta\to\infty$ and thus that
	\begin{equation}
		Z^\nu\in L^1(0,\infty) \label{p028}
	\end{equation}
	since $\nu>1$. We next argue as in the proof of Lemma~\ref{lem.P1} to show that $W_c(P_0)$ contains a single orbit which satisfies the properties listed in Lemma~\ref{lem.P0} according to~\eqref{p025}, \eqref{p027}, and~\eqref{p028}.
	
	On the other hand, assume that there are $\tilde{\delta}_0>0$ and $\tilde{h}_0\in C^1(-\tilde{\delta}_0,\tilde{\delta}_0)$ satisfying $\tilde{h}_0(0)=\tilde{h}_0'(0)=0$ such that
	\begin{equation*}
		\tilde{W}_c(P_0) = \left\{ \frac{\zeta}{m-1} \mathbf{e}_2 + \tilde{h}_0(\zeta) \mathbf{e}_1\ :\ \zeta\in (-\tilde{\delta}_0,\tilde{\delta}_0) \right\}
	\end{equation*}
	is a $C^1$-smooth center manifold for the flow $\Phi$ associated to~\eqref{p018}. Pick $(Y_0,Z_0)\in W_c(P_0)\cap \big( \mathbb{R}\times (0,\infty) \big)$ such that $Z_0<\tilde{\delta}_0$ and set $(Y,Z) = \Phi(\cdot,Y_0,Z_0)$. Since $\eta^+(Y_0,Z_0)=\infty$ and $Z(\eta)$ converges to zero as $\eta\to\infty$ according to~\eqref{p025} and~\eqref{p027}, the condition \cite[(3.20)]{Sij} is satisfied and an application of \cite[Theorem~3.2'(i)]{Sij} ensures that $h_0(Z(\eta)) = \tilde{h}_0(Z(\eta))$ for $\eta\in [0,\infty)$. Therefore, $h_0(Z_0) = \tilde{h}_0(Z_0)$, so that $(Y_0,Z_0)$ belongs to $\tilde{W}_c(P_0)\cap \big( \mathbb{R}\times (0,\infty) \big)$. We have thus established that
	\begin{equation*}
		W_c(P_0)\cap \big( \mathbb{R}\times (0,\infty) \big) \subset \tilde{W}_c(P_0)\cap \big( \mathbb{R}\times (0,\infty) \big).
	\end{equation*}
	A similar argument gives the reverse inclusion, thereby establishing the uniqueness of the center manifold.
	
	We finally observe that, for $Y_0\in\mathbb{R}$, the solution $(Y,Z)=\Phi(\cdot,Y_0,0)$ to~\eqref{p018} is explicit and given by
	\begin{equation*}
		Y(\eta) = \frac{(m-1) Y_0}{Y_0 + [(m-1)-Y_0] e^{-(m-1)\eta}} \;\;\text{ and }\;\;  Z(\eta)  = 0
	\end{equation*}
	for $\eta\in (-\infty,\eta^+(Y_0,0))$ with
	\begin{align*}
		\eta^+(Y_0,0) & = \infty \;\;\text{ for }\;\; Y_0\in (-\infty,m-1], \\
		\eta^+(Y_0,0) & = \frac{1}{m-1} \ln{\left( \frac{Y_0+1-m}{Y_0} \right)} \;\;\text{ for }\;\; Y_0>m-1.
	\end{align*}
	Moreover, for $Y_0\in (-\infty,m-1)$,
	\begin{equation*}
		\lim_{\eta\to-\infty} \Phi(\eta,Y_0,0) = P_0, \qquad \lim_{\eta\to \infty} \Phi(\eta,Y_0,0) = P_1,
	\end{equation*}
	so that $(-\infty,m-1)\times\{0\} \subset W_u(P_0)$. Since $W_u(P_0)$ is one-dimensional and unique, we conclude that $W_u(P_0)=(-\infty,m-1)\times\{0\}$.
\end{proof}

\section{Invariant regions and trajectory analysis}\label{subsec.aads}

To proceed further and derive more qualitative properties of solutions to~\eqref{p018}, it turns out to be more convenient to perform a transverse change of variables and set, for $K>0$ and $(Y_0,Z_0)\in \mathbb{R}^2$,
\begin{equation}
	U(\lambda_K\eta) = \frac{Z(\eta)}{Z_K}, \quad V(\lambda_K\eta) = \frac{(m-1)Y(\eta) - 2 Z_+^\nu(\eta)}{\lambda_K} \label{p029}
\end{equation}
for $\eta\in (\eta^-(Y_0,Z_0),\eta^+(Y_0,Z_0))$, where $Z_K$ is defined in~\eqref{xp2}, $(Y,Z)=\Phi(\cdot,Y_0,Z_0)$, and
\begin{equation*}
	\lambda_K = \sqrt{\frac{2(mN-N+2)}{m-1}}Z_K^\nu.
\end{equation*}
Introducing
\begin{equation*}
	(U_0,V_0) = \left( \frac{Z_0}{Z_K} , \frac{(m-1)Y_0 - 2 Z_0^\nu}{\lambda_K} \right), \quad \zeta=\lambda_K\eta,
\end{equation*}
the system of ordinary differential equations solved by $(U,V)$ reads
\begin{equation}
	\frac{d}{d\zeta}(U,V) = \mathbf{S}_K(U,V), \qquad (U,V)(0) = (U_0,V_0), \label{p030}
\end{equation}
where the vector field $\mathbf{S}_K:=(S_{1,K},S_{2,K})$ is given by
\begin{equation*}
	S_{1,K}(u,v) = \frac{uv}{\nu}, \qquad S_{2,K}(u,v) = C_1(K) v - \frac{v^2}{m-1} + u - u_+^{2\nu} - C_2 v u_+^\nu
\end{equation*}
for $(u,v)\in\mathbb{R}^2$ and
\begin{equation*}
	C_1(K) := \frac{m-1}{\lambda_K} , \qquad C_2 := \frac{N(m-1)+ 2m + 2}{\sqrt{2(m-1)[N(m-1)+2]}}. 
\end{equation*}
We point out that
\begin{equation}
	C_1(K) = \frac{\sqrt{m-1}}{K^{(m-1)/(m-q)}} \left( \frac{2 [N(m-1)+2]}{(m-1)^2} \right)^{\mu/2(m-q)} = \frac{C_1(1)}{K^{(m-1)/(m-q)}} \label{p031}
\end{equation}
is a decreasing function of the parameter $K$.

Since $\nu>1$, the vector field $\mathbf{S}_K$ belongs to $C^1(\mathbb{R}^2;\mathbb{R}^2)$ with
\begin{equation*}
	\partial_u S_{1,K}(u,v) = \frac{v}{\nu}, \qquad \partial_v S_{1,K}(u,v) = \frac{u}{\nu},
\end{equation*}
\begin{equation*}
	\partial_u S_{2,K}(u,v) = 1 - 2 \nu u_+^{2\nu-1} - \nu C_2 v u_+^{\nu-1}, \qquad \partial_v S_{2,K}(u,v) = C_1(K) - \frac{2v}{m-1} - C_2 u_+^\nu,
\end{equation*}
and it has three critical points
\begin{equation*}
	Q_0 := (0,0), \quad Q_{1,K} := \big( 0,(m-1)C_1(K) \big), \quad Q_2 := (1,0),
\end{equation*}
corresponding to the critical points $(P_0,P_1,P_2)$ of $\mathbf{R}$ in~\eqref{p018}. It follows from the Cauchy-Lipschitz theorem that, for any $(U_0,V_0)\in\mathbb{R}^2$, there is a unique maximal solution
\begin{equation*}
	(U,V) = \Psi_K(\cdot,U_0,V_0) \in C^1\big((\zeta_K^-(U_0,V_0),\zeta_K^+(U_0,V_0));\mathbb{R}^2 \big)
\end{equation*}
to~\eqref{p030} with $\zeta_K^-(U_0,V_0)<0<\zeta_K^+(U_0,V_0)$. We also introduce a parametrization of the orbits $l_0(K)$ and $l_1(K)$ in the $(u,v)$-plane and set
\begin{equation}
	\begin{split}
		l_0(K) & = \big\{ (U_{0,K},V_{0,K})(\zeta)\ :\ \zeta\in (\zeta_{0,K}^-,\infty) \big\}, \\
		l_1(K) & = \big\{ (U_{1,K},V_{1,K})(\zeta)\ :\ \zeta\in (-\infty,\zeta_{1,K}^+) \big\},
	\end{split} \label{p049}
\end{equation}
with $\zeta_{0,K}^-\in [-\infty,0)$ and $\zeta_{1,K}^+\in (0,\infty]$. By Lemma~\ref{lem.P1}, Lemma~\ref{lem.P0}, and~\eqref{p029},
\begin{equation}
	\lim_{\zeta\to \infty} U_{0,K}(\zeta) = \lim_{\zeta\to \infty} V_{0,K}(\zeta) = 0, \qquad \lim_{\zeta\to\infty} \frac{V_{0,K}}{U_{0,K}}(\zeta) = - \frac{1}{C_1(K)} \label{p050}
\end{equation}
and
\begin{equation*}
	\lim_{\zeta\to -\infty} U_{1,K}(\zeta) = 0, \qquad \lim_{\zeta\to -\infty} V_{1,K}(\zeta) = (m-1)C_1(K). 
\end{equation*}
We are now in a position to complete the local analysis of the critical point $Q_2$, which was not considered in the previous sections.

\begin{lemma}\label{lem.Q2}
	There are positive constants $K_u<K_f<K_s$ depending only on $N$, $m$, and $q$ such that the critical point $Q_2$ is
	\begin{itemize}
		\item an \emph{unstable node} (that is, two positive eigenvalues) for $K\in(0,K_u]$.
		\item an \emph{unstable focus} (that is, two complex conjugate eigenvalues with positive real parts) for $K\in(K_u,K_f)$.
		\item a \emph{stable focus} (that is, two complex conjugate eigenvalues with negative real parts) for $K\in(K_f,K_s)$.
		\item a \emph{stable node} (that is, two negative eigenvalues) for $K\in[K_s,\infty)$.
	\end{itemize}
\end{lemma}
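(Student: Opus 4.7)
The proof is a standard linear stability analysis at the critical point $Q_2=(1,0)$, combined with the monotonicity of $K\mapsto C_1(K)$ recorded in \eqref{p031}. First, using the formulas for the partial derivatives of $\mathbf{S}_K$ displayed just before the statement, I would compute
$$D\mathbf{S}_K(Q_2)=\begin{pmatrix} 0 & 1/\nu \\ 1-2\nu & C_1(K)-C_2 \end{pmatrix},$$
so that its trace is $T(K):=C_1(K)-C_2$ and its determinant is $\delta_0:=(2\nu-1)/\nu = 2-1/\nu$, which is strictly positive since $\nu>1$ and independent of $K$.

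The eigenvalues of $D\mathbf{S}_K(Q_2)$ are $\lambda_\pm(K)=\tfrac{1}{2}\bigl(T(K)\pm\sqrt{T(K)^2-4\delta_0}\bigr)$. Because $\delta_0>0$, whenever these eigenvalues are real their product is positive, so both share the sign of $T(K)$; whenever they are complex conjugate, their common real part equals $T(K)/2$. Hence the nature of $Q_2$ is governed by the pair $\bigl(\mathrm{sgn}\,T(K),\mathrm{sgn}\,\Delta(K)\bigr)$ with $\Delta(K):=T(K)^2-4\delta_0$, yielding the four cases in the lemma: unstable node when $T(K)>0$ and $\Delta(K)\ge 0$, unstable focus when $T(K)>0$ and $\Delta(K)<0$, stable focus when $T(K)<0$ and $\Delta(K)<0$, and stable node when $T(K)<0$ and $\Delta(K)\ge 0$.

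To extract the thresholds, I would exploit that, by \eqref{p031}, the map $K\mapsto C_1(K)$ is a continuous strictly decreasing bijection from $(0,\infty)$ onto $(0,\infty)$. It then suffices to define $K_u$, $K_f$, $K_s$ as the unique solutions of $T(K_u)=2\sqrt{\delta_0}$, $T(K_f)=0$, and $T(K_s)=-2\sqrt{\delta_0}$ respectively, and the monotonicity of $T$ automatically forces $0<K_u<K_f<K_s$, \emph{provided} each target value of $C_1$ lies in $(0,\infty)$. The only non-trivial such requirement is $C_2>2\sqrt{\delta_0}$, which, using $\delta_0=(m-q)/(m-1)$ and the explicit formula for $C_2$, is equivalent after clearing denominators to the identity
$$\bigl(N(m-1)+2-2m\bigr)^2 + 8q\bigl(N(m-1)+2\bigr)>0,$$
which is manifestly true. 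With $K_u$, $K_f$, $K_s$ so defined, reading off the signs of $T(K)$ and $\Delta(K)$ on each of the four intervals $(0,K_u]$, $(K_u,K_f)$, $(K_f,K_s)$, $[K_s,\infty)$ gives the four classifications stated. The argument is essentially routine; the only mildly delicate point is the positivity check just mentioned, which guarantees that $K_s$ is well-defined.
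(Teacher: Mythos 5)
Your proof is correct and follows essentially the same route as the paper: compute $D\mathbf{S}_K(Q_2)$, observe that the determinant $(2\nu-1)/\nu=(m-q)/(m-1)$ is positive and $K$-independent while the trace $C_1(K)-C_2$ decreases in $K$, and read off the four regimes from the signs of the trace and discriminant. The one thing you add that the paper leaves implicit is the explicit check that $C_2>2\sqrt{(m-q)/(m-1)}$, which is what guarantees $K_s$ exists; that check is correct and is a welcome bit of extra rigor.
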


\begin{proof}
	Since
	\begin{equation*}
		D\mathbf{S}_K(Q_2) =
		\begin{pmatrix}
			0 & \displaystyle{\frac{1}{\nu}} \\
			\displaystyle{- \frac{m-q}{\mu}} & C_1(K) - C_2
		\end{pmatrix}
	\end{equation*}
	the eigenvalues $(\lambda_1,\lambda_2)$ of $D\mathbf{S}_K(Q_2)$ are solutions to
	\begin{equation*}
		\lambda^2 - (C_1(K)-C_2) \lambda + (m-q)/(m-1)=0.
	\end{equation*}
	Its discriminant being $\big( C_1(K)-C_2 \big)^2 - 4(m-q)/(m-1)$, it readily follows from the monotonicity~\eqref{p031} of $K\mapsto C_1(K)$ that there are $0<K_u<K_s$ such that
	\begin{align*}
		C_2 - C_1(K) & \ge 2 \sqrt{\frac{m-q}{m-1}}, \qquad K\ge K_s, \\
		C_1(K) - C_2 & \ge 2 \sqrt{\frac{m-q}{m-1}}, \qquad 0<K\le K_u, \\
		|C_1(K)-C_2| & < 2 \sqrt{\frac{m-q}{m-1}}, \qquad K\in (K_u,K_s).
	\end{align*}
	Then $(\lambda_1,\lambda_2)\in (0,\infty)^2$ when $K\ge K_s$, $(\lambda_1,\lambda_2)\in (-\infty,0)^2$ when $K\le K_u$ and there is a unique $K_f\in (K_u,K_s)$ (which is characterized by the identity $C_1(K_f)=C_2$) such that $(\lambda_1,\lambda_2)$ are complex conjugates with positive real parts for $K\in (K_u,K_f)$ and complex conjugates with negative real parts for $K\in (K_f,K_s)$.
\end{proof}

\subsection{Configuration of the phase plane for small $K>0$} \label{subsec.cppsK}

We provide in the next result further information on the orbits $l_0(K)$ and $l_1(K)$ for small values of $K$ (corresponding to large values of the self-similar exponents $\alpha$ and $\beta$).

\begin{proposition}\label{prop.Ksmall}
	There is $K_\star>0$ such that the following two statements hold true for any $K\in(0,K_\star)$:
	\begin{enumerate}
		\item The orbit $l_0(K)$ is a connection between $Q_2$ and $Q_0$ and is completely contained in $(0,\infty)\times(-\infty,0)$.
		\item The orbit $l_1(K)$ crosses the half-line $(0,\infty)\times \{0\}$ at a finite point.
	\end{enumerate}
\end{proposition}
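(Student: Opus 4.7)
The analysis is carried out in the $(U,V)$-phase plane of~\eqref{p030}. By Lemma~\ref{lem.P0} and~\eqref{p029}, the orbit $l_0(K)$ approaches $Q_0$ as $\zeta\to\infty$ tangentially to the direction $V/U\to -1/C_1(K)<0$, so a forward-time neighborhood of $Q_0$ on $l_0(K)$ sits in the fourth quadrant $(0,\infty)\times(-\infty,0)$. Likewise, from Lemma~\ref{lem.P1} and~\eqref{p029}, $l_1(K)$ leaves $Q_{1,K}=(0,(m-1)C_1(K))$ into the first quadrant $(0,\infty)^2$. A preliminary remark used in both parts is that on the open segment $V=0$, $U\in(0,1)$, the vector field satisfies $dV/d\zeta = U-U^{2\nu}>0$, so trajectories cross $V=0$ strictly upward there.

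For Part~1 I plan to construct a trapping region $\mathcal{R}_0\subset(0,1)\times(-\infty,0)$ bounded above by $V=0$, on the sides by $U=0$ and $U=1$, and below by a barrier derived from the negative branch $V_-(U)$ of the $V$-nullcline (which, for small $K$, satisfies $V_-(U)\sim -U(1-U^{2\nu-1})/C_1(K)$ uniformly on $(0,1)$, as follows from the identity $V_+V_- = -(m-1)(U-U^{2\nu})$ together with $V_+\sim (m-1)C_1(K)$). A second-order expansion of the center manifold at $Q_0$ shows that $l_0(K)$ enters $\mathcal{R}_0$ strictly for large $\zeta$, and sign-of-the-vector-field checks on each piece of $\partial\mathcal{R}_0$ will establish that $\mathcal{R}_0$ is invariant under the backward flow. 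Dulac's criterion with the weight $g(U,V)=1/(UV)$, for which $\nabla\cdot(g\mathbf{S}_K) = -\frac{1}{U(m-1)}-\frac{1-U^{2\nu-1}}{V^2}<0$ on $(0,1)\times(-\infty,0)$, then excludes closed orbits in $\mathcal{R}_0$, so Poincar\'e--Bendixson forces the $\alpha$-limit of $l_0(K)$ to be an equilibrium on $\partial\mathcal{R}_0$; since $Q_0$ is its $\omega$-limit rather than $\alpha$-limit, the only possibility left is $Q_2$.

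For Part~2 I argue by contradiction, assuming $l_1(K)$ never crosses $V=0$. Then $V>0$ throughout and $dU/d\zeta = UV/\nu>0$, so $U$ is strictly increasing along $l_1(K)$. If $U$ has a finite limit $U_\infty$, the orbit converges to an equilibrium of $\mathbf{S}_K$, which must be $Q_0$, $Q_{1,K}$, or $Q_2$; the first two have $U=0$ and contradict the monotone increase from $0$, while $Q_2$ is ruled out by Lemma~\ref{lem.Q2}, which declares it an unstable node for $K\in(0,K_u]$. If instead $U\to\infty$, I will show that for $U$ large the term $-U^{2\nu}$ dominates in $dV/d\zeta$; integrating the resulting differential inequality $V\,dV/dU\lesssim -\nu U^{2\nu-1}$ yields $V^2\lesssim C-U^{2\nu}$, which forces $V$ to vanish at a finite value of $U$ and gives the required contradiction.

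The main obstacle lies in the backward-invariance of $\mathcal{R}_0$ in Part~1, more precisely in ruling out that the backward trajectory of $l_0(K)$ exits through the segment $\{U=1\}\times(-\infty,0)$ away from $Q_2$. Here the smallness of $K$, equivalently the largeness of $C_1(K)$, is used in an essential way: it ensures that $V_-$ stays uniformly close to the $U$-axis (of order $1/C_1(K)$), making $\mathcal{R}_0$ a thin strip in which the unstable nature of $Q_2$ (Lemma~\ref{lem.Q2}) can be exploited to funnel the backward orbit into $Q_2$. The precise matching between the center-manifold expansion of $l_0(K)$ at $Q_0$ and the nullcline $V_-$ near the two corners $(0,0)$ and $(1,0)$ is where I expect most of the technical work to concentrate.
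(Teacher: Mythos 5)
Your Part~2 argument is essentially identical to the paper's: assume $l_1(K)\subset(0,\infty)^2$, deduce monotonicity of $U_{1,K}$, establish boundedness, and conclude via the $\omega$-limit set and Lemma~\ref{lem.Q2} that $l_1(K)$ would connect to the unstable node $Q_2$, a contradiction. The splitting into $U\to U_\infty<\infty$ versus $U\to\infty$ is a minor variant; the paper proves directly that $U_{1,K}+V_{1,K}$ is bounded by a single Young-inequality estimate, which absorbs your second case into the first.

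For Part~1 the strategy (forward-entering trapping region in the fourth quadrant, then Poincar\'e--Bendixson plus exclusion of closed orbits) is the paper's as well, and your Dulac computation $\nabla\cdot\big(\tfrac{1}{UV}\,\mathbf{S}_K\big)=-\tfrac{1}{(m-1)U}-\tfrac{1-U^{2\nu-1}}{V^2}<0$ on $(0,1)\times(-\infty,0)$ is correct and is a cleaner way to rule out limit cycles than the ad hoc argument the paper uses (the paper exploits monotonicity of $U_{0,K}$ to force a hypothetical periodic orbit to have constant $U$-coordinate, then a cubic ``potential'' for the resulting scalar ODE in $V$). So that ingredient actually improves on the paper.

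The genuine gap is exactly where you flag it: the invariance of your region near $Q_2$. If the lower boundary is taken to be (a curve close to) the $V$-nullcline branch $V_-$, then along that curve the field is purely horizontal and whether trajectories cross it depends on the sign of $V_-'$, which changes on $(0,1)$ (negative near $U=0$, positive near $U=1$), so $V_-$ is not a one-sided barrier. And if one instead keeps a vertical side $\{1\}\times(V_-(1),0)$, the backward flow exits through it ($dU/d\zeta<0$ in forward time on $U=1$, $V<0$), so it is not a backward barrier at all; ``funneling into $Q_2$ via unstableness'' is not a proof. The paper sidesteps both problems with the triangle $\mathcal T_1=\{(u,v):0<u<1,\ u-1\le v\le 0\}$, whose two bounded sides both terminate at the vertex $Q_2=(1,0)$: the top side $v=0$ is crossed strictly upward (your observation $u-u^{2\nu}>0$), and on the hypotenuse $v=u-1$ a single explicit computation gives
\begin{equation*}
\big\langle \mathbf{S}_K(u,u-1),(1,-1)\big\rangle \ge (1-u)\Big(C_1(K)-\tfrac1\nu-C_2-(2\nu-1)\Big),
\end{equation*}
which is positive for $K$ small because $C_1(K)\to\infty$. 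This is the one place where smallness of $K$ enters, and it does so through a clean inequality, not through the asymptotic shape of the nullcline. You would need to replace your nullcline-derived barrier by such a concrete subsolution curve hitting $Q_2$ and do the corresponding sign check; as written, the proposal defers precisely the step that carries the content of the proposition.
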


\begin{proof}
	\noindent\textbf{Step~1.} Let us introduce the triangle
	\begin{equation*}
		\mathcal{T}_1 := \{ (u,v)\in (0,1)\times\mathbb{R}\ :\ u-1 \le v \le 0\}.
	\end{equation*}
	We shall show that $\mathcal{T}_1$ is negatively invariant for the flow associated to~\eqref{p030} when $K$ is small. Indeed, we first recall that the first equation in~\eqref{p030} ensures that $(0,\infty)\times\mathbb{R}$ is invariant for~\eqref{p030}. Next, on $(0,1)\times \{0\}$,
	\begin{equation*}
		\big\langle D\mathbf{S}_K(u,0) , (0,1) \big\rangle = u - u^{2\nu} >0,
	\end{equation*}
	an inequality which implies that $(0,1)\times (-\infty,0]$ is negatively invariant for~\eqref{p030}.
	Finally, for $u\in (0,1)$,
	\begin{align*}
		\big\langle D\mathbf{S}_K(u,u-1) , (1,-1) \big\rangle&  = (1-u) \left( C_1(K) - \frac{u}{\nu} + \frac{1-u}{m-1} - C_2 u^\nu \right) - u + u^{2\nu} \\
		& \ge(1-u) \left( C_1(K) - \frac{1}{\nu} - C_2 \right) + (2\nu-1) u \int_1^u u_*^{2\nu-2}\ du_* \\
		& \ge(1-u) \left( C_1(K) - \frac{1}{\nu} - C_2 - (2\nu-1) \right),
	\end{align*}
	and the right-hand side of the above inequality is positive for $K$ sufficiently small according to~\eqref{p031}. Therefore, there is $K_\star>0$ such that $\{(u,v)\in (0,1)\times\mathbb{R}\ :\ u-1\le v\}$ is negatively invariant for~\eqref{p030} when $K\in (0,K_\star)$. We have thus established that
	\begin{equation}
		\mathcal{T}_1 \;\text{ is negatively invariant for~\eqref{p030} when }\; K\in (0,K_\star). \label{p032}
	\end{equation}

\medskip
	
\noindent\textbf{Step~2.} Let $K\in (0,K_\star)$. According to~\eqref{p050} and Lemma~\ref{lem.P0}, $U_{0,K}(\zeta)>0$ for $\zeta>0$ and there is $\zeta_\infty>0$ such that, for $\zeta>\zeta_\infty$,
\begin{equation*}
	\max\{ U_{0,K}(\zeta) , -V_{0,K}(\zeta) \})<\frac{1}{2}, \quad -\frac{2}{C_1(K)} < \frac{V_{0,K}(\zeta)}{U_{0,K}(\zeta)} < - \frac{1}{2 C_1(K)}
\end{equation*}
Therefore, for $\zeta>\zeta_\infty$, $V_{0,K}(\zeta) < -U_{0,K}(\zeta)/2C_1(K)< 0 < U_{0,K}(\zeta)$ and
\begin{equation*}
	U_{0,K}(\zeta) - 1 < - \frac{1}{2} < V_{0,K}(\zeta);
\end{equation*}
that is, $(U_{0,K},V_{0,K})(\zeta) \in \mathcal{T}_1$ for $\zeta>\zeta_\infty$. Owing to the negative invariance~\eqref{p032} of $\mathcal{T}_1$, we conclude that $(U_{0,K},V_{0,K})(\zeta)\in \mathcal{T}_1$ for all $\zeta\in (\zeta_{0,K}^-,\infty)$. This property, along with~\eqref{p030} and the boundedness of $\mathcal{T}_1$, implies in particular that $\zeta_{0,K}^-=-\infty$ and
\begin{equation}
	(U_{0,K},V_{0,K})(\zeta) \in \mathcal{T}_1 \;\;\text{ with }\;\; \frac{dU_{0,K}}{d\zeta}(\zeta) < 0, \qquad \zeta\in\mathbb{R}. \label{p033}
\end{equation}
An immediate consequence of~\eqref{p033} is the existence of $U_{-\infty}\in (U_{0,K}(0),1]$ such that
\begin{equation}
	\lim_{\zeta\to -\infty} U_{0,K}(\zeta) = U_{-\infty}. \label{p080}
\end{equation}
We now study the behavior of $(U_{0,K},V_{0,K})(\zeta)$ as $\zeta\to-\infty$ and introduce the corresponding $\alpha$-limit set
\begin{equation*}
	\alpha_{0,K} := \{ \text{cluster points in }\; \mathbb{R}^2 \;\text{ of }\; (U_{0,K},V_{0,K})(\zeta) \;\text{ as }\; \zeta\to -\infty \}.
\end{equation*}
Assume for contradiction that $\alpha_{0,K}$ contains no critical point of~\eqref{p030}. According to the Poincar\'e-Bendixson theorem, see \cite[Section~3.7, Theorem~1]{Pe}, $\alpha_{0,K}$ is then a periodic orbit $(U_P,V_P)$ of~\eqref{p030} with period $P>0$ and it satisfies $U_P(\zeta) = U_{-\infty}$ for all $\zeta\in [0,P]$ by~\eqref{p080}. This property, along with~\eqref{p030}, implies that $V_P$ solves
\begin{equation*}
	\frac{dV_P}{d\zeta} = C_1(K) V_P - \frac{V_P^2}{m-1} - C_2 U_{-\infty}^\nu V_P + U_{-\infty} - U_{-\infty}^{2\nu}, \qquad \zeta\in [0,P].
\end{equation*}
Introducing
\begin{equation*}
	H(v) := - \big( U_{-\infty} - U_{-\infty}^{2\nu} \big) v - \frac{C_1(K)-C_2 U_{-\infty}^\nu}{2} v^2 + \frac{v^3}{3(m-1)}, \qquad v\in\mathbb{R},
\end{equation*}
we see that, for $\zeta\in [0,P]$,
\begin{equation*}
	\frac{d}{d\zeta} H(V_P) = - \left| \frac{dV_P}{d\zeta} \right|^2.
\end{equation*}
Hence, after integration over $[0,P]$,
\begin{equation*}
	0 = H(V_P(P)) - H(V_P(0)) = - \int_0^P \left| \frac{dV_P}{d\zeta}(\zeta_*) \right|^2\ d\zeta_*,
\end{equation*}
from which we conclude that there is $V_{-\infty}\le 0$ such that $V_P\equiv V_\infty$ on $[0,P]$, the non-positivity of $V_\infty$ being a consequence of~\eqref{p032}. The periodic orbit $(U_P,V_P)$ then reduces to a critical point $(U_{-\infty},V_{-\infty})$ and a contradiction. We have thus established that $\alpha_{0,K}$ contains at least one critical point of~\eqref{p030}, which belongs to $\overline{\mathcal{T}_1}$ and with first component $U_{-\infty}>0$ by~\eqref{p080}. Therefore, $Q_0\not\in \alpha_{0,K}$ and we conclude that $\alpha_{0,K}=\{Q_2\}$, thereby completing the proof of the first statement of Proposition~\ref{prop.Ksmall}.

\medskip

\noindent\textbf{Step~3.} We finally prove the second statement of Proposition~\ref{prop.Ksmall}. To this end, we assume for contradiction that there is $K\in (0,K_u)$ such that
\begin{equation}
	l_1(K) \subset (0,\infty)^2. \label{p035}
\end{equation}
By~\eqref{p030}, \eqref{p035}, and Young's inequality,
\begin{align*}
	\frac{dV_{1,K}}{d\zeta} & \le \frac{m-1}{2} C_1(K)^2+ \frac{V_{1,K}^2}{2(m-1)} - \frac{V_{1,K}^2}{m-1} + \frac{U_{1,K}^{2\nu}}{2\nu} + \frac{2\nu-1}{2\nu} - U_{1,K}^{2\nu} \\
	& \le \frac{m-1}{2} C_1(K)^2 + \frac{2\nu-1}{2\nu} - \frac{V_{1,K}^2}{2(m-1)} - \frac{2\nu-1}{2\nu} U_{1,K}^{2\nu}
\end{align*}
and
\begin{align*}
	\frac{dU_{1,K}}{d\zeta} & \le \frac{U_{1,K}^{2\nu}}{2\nu} + \frac{2\nu-1}{2\nu} \left( \frac{V_{1,K}}{\nu} \right)^{(2\nu-1)/2\nu} \le \frac{U_{1,K}^{2\nu}}{2\nu} + \frac{V_{1,K}^2}{4(m-1)} + C(m,q),
\end{align*}
so that
\begin{equation*}
	\frac{d}{d\zeta} \big( U_{1,K} + V_{1,K} \big) \le C(K,m,q) - \frac{\nu-1}{\nu} U_{1,K}^{2\nu} - \frac{V_{1,K}^2}{4(m-1)} .
\end{equation*}
It readily follows from the above differential inequality that $(U_{1,K},V_{1,K})$ is bounded on $[0,\zeta_{1,K}^+)$, while~\eqref{p030} and~\eqref{p035} ensure that $dU_{1,K}/d\zeta>0$ on $[0,\zeta_{1,K}^+)$. Therefore, $\zeta_{1,K}^+=\infty$ and there exists $U_\infty>0$ such that
\begin{equation*}
	\lim_{\zeta\to\infty} U_{1,K}(\zeta) = U_\infty>0. 
\end{equation*}
We now argue as at the end of \textbf{Step~2} to show that
\begin{equation*}
	\omega_{1,K} := \{ \text{cluster points in }\; \mathbb{R}^2 \;\text{ of }\; (U_{1,K},V_{1,K})(\zeta) \;\text{ as }\; \zeta\to \infty \} = \{Q_2\},
\end{equation*}
from which we deduce that $l_1(K)$ connects $Q_1$ to $Q_2$. However, $Q_2$ is an unstable node as $K\in (0,K_u)$ and a contradiction. Consequently, $l_1(K)$ crosses the half-line $(0,\infty)\times \{0\}$ at a finite point.
\end{proof}

\begin{remark}\label{rem.3}
	The first statement in Proposition~\ref{prop.Ksmall} already leads to the proof of Theorem~\ref{th.2}. However, the end of the proof is postponed to Section~\ref{sec.proofs} at the end of the paper, since more precise information on the dynamics of~\eqref{p030} is needed to prove Theorem~\ref{th.1}.
\end{remark}

\subsection{Invariant regions}\label{subsec.irm}

Let $K>0$. Recalling the definition~\eqref{p049} of the trajectory $(U_{1,K},V_{1,K})$, we define
\begin{equation*}
	\begin{split}
		\zeta_{1,K} & := \inf\{ \zeta\in (-\infty,\zeta_{1,K}^+)\ :\ V_{1,K}(\zeta)=0 \} \in (-\infty,\infty], \\
		U_1(K) & := \lim_{\zeta\nearrow\zeta_{1,K}} U_{1,K}(\zeta) \in (0,\infty],
	\end{split} 
\end{equation*}
the latter being a consequence of the monotonicity of $U_{1,K}$ on $(-\infty,\zeta_{1,K})$.

We gather preliminary information on $U_1(K)$ in the next lemma.
	
\begin{lemma}\label{lem.U1K}
	Let $K>0$.
	\begin{itemize}
		\item [(a)] If $U_1(K)<\infty$ and $\zeta_{1,K}<\infty$, then
		\begin{equation*}
			U_{1}(K)>1, \quad V_{1,K}(\zeta_{1,K})=0, \;\text{ and }\;\; \frac{dV_{1,K}}{d\zeta}(\zeta_{1,K})<0.
		\end{equation*}
		\item [(b)] If $U_1(K)<\infty$ and $\zeta_{1,K}=\infty$, then $U_{1}(K)=1$ and
		\begin{equation*}
			\lim\limits_{\zeta\to\infty} \big(U_{1,K},V_{1,K}\big)(\zeta) = Q_2.
		\end{equation*}
	\end{itemize}
\end{lemma}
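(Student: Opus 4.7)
The plan is to handle both cases by exploiting the monotonicity of $U_{1,K}$ on $(-\infty,\zeta_{1,K})$. By definition of $\zeta_{1,K}$, $V_{1,K}(\zeta)>0$ for $\zeta\in(-\infty,\zeta_{1,K})$, so the first equation in~\eqref{p030} yields $dU_{1,K}/d\zeta = U_{1,K}V_{1,K}/\nu>0$ on this interval. Combined with the limit $\lim_{\zeta\to-\infty}U_{1,K}(\zeta)=0$, this ensures that $U_{1,K}$ is strictly increasing from $0$ up to $U_1(K)$, which also guarantees $U_1(K)>0$.

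For part~(a), the boundedness of $U_{1,K}$ on $(-\infty,\zeta_{1,K})$ (given by $U_1(K)<\infty$) together with the quadratic damping $-V^2/(m-1)$ in the second equation of~\eqref{p030} (which dominates for $V$ large, with $V_{1,K}>0$ bounded from below by~$0$) implies that $V_{1,K}$ is also bounded on this finite interval. Therefore, the maximal solution extends to $[\zeta_{1,K},\zeta_{1,K}+\varepsilon)$, and by continuity $V_{1,K}(\zeta_{1,K})=0$. Evaluating the second equation at $\zeta=\zeta_{1,K}$ with $V_{1,K}=0$ and $U_{1,K}=U_1(K)$ kills all $V$-dependent terms and gives
\[
\frac{dV_{1,K}}{d\zeta}(\zeta_{1,K})=U_1(K)-U_1(K)^{2\nu}.
\]
Since $V_{1,K}$ passes from positive values to a zero at $\zeta_{1,K}$, this derivative must be $\le 0$, forcing $U_1(K)\ge 1$. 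Equality would mean $(U_{1,K},V_{1,K})(\zeta_{1,K})=(1,0)=Q_2$; but $Q_2$ is a critical point, and the constant orbit at $Q_2$ would then be a second solution of~\eqref{p030} through that initial datum, violating uniqueness, since the orbit started at $Q_{1,K}\ne Q_2$. Hence $U_1(K)>1$ strictly, and the expression above is strictly negative, as claimed.

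For part~(b), $\zeta_{1,K}=\infty$ means $V_{1,K}>0$ throughout $(-\infty,\zeta_{1,K}^+)$ and $U_{1,K}$ is increasing there. The hypothesis $U_1(K)<\infty$ bounds $U_{1,K}$, and the same quadratic-damping argument bounds $V_{1,K}$; therefore $\zeta_{1,K}^+=\infty$ and the forward orbit $\{(U_{1,K},V_{1,K})(\zeta):\zeta\ge 0\}$ lies in a compact subset of $\mathbb{R}^2$. Its $\omega$-limit set $\omega_{1,K}$ is thus non-empty, compact, connected, and invariant under the flow, and convergence $U_{1,K}(\zeta)\to U_1(K)$ forces $\omega_{1,K}$ to be contained in the vertical line $\{U=U_1(K)\}$. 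Invoking Poincar\'e--Bendixson (as quoted in the proof of Proposition~\ref{prop.Ksmall}), any full orbit inside $\omega_{1,K}$ has constant first coordinate, so $dU/d\zeta=UV/\nu\equiv 0$; since $U_1(K)>0$ this imposes $V\equiv 0$, reducing every orbit of $\omega_{1,K}$ to a critical point on the line $\{U=U_1(K)\}$. The only critical points of $\mathbf{S}_K$ are $Q_0$, $Q_{1,K}$, $Q_2$; the strict monotonicity of $U_{1,K}$ from $0$ rules out $U_1(K)=0$, leaving $U_1(K)=1$ and $\omega_{1,K}=\{Q_2\}$.

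The main technical step is the uniform bound on $V_{1,K}$ (used in both parts to guarantee the orbit extends up to $\zeta_{1,K}$ or stays in a compact set); once this is settled, the finite-time contradiction via uniqueness in part~(a) and the Poincar\'e--Bendixson trichotomy in part~(b) are clean and short.
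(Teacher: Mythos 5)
Your proof is correct and follows essentially the same approach as the paper: in part (a), evaluate the second component of the vector field at the first zero of $V_{1,K}$ and rule out equality $U_1(K)=1$ by uniqueness (the orbit would then reduce to the constant orbit at $Q_2$, contradicting its origin at $Q_{1,K}$); in part (b), bound the orbit in $(0,\infty)^2$ via the quadratic damping of the second equation, then pin down the $\omega$-limit set as the single critical point $Q_2$ using $U_{1,K}\to U_1(K)>0$ and the invariance / Poincar\'e--Bendixson structure, exactly paralleling Step~3 of the proof of Proposition~\ref{prop.Ksmall} (which the paper simply cites). One small economy you could make: in part (a) the paragraph bounding $V_{1,K}$ so as to ``extend the solution past $\zeta_{1,K}$'' is unnecessary, since by definition $\zeta_{1,K}$ is the infimum of the closed, non-empty set $\{V_{1,K}=0\}\cap(-\infty,\zeta_{1,K}^+)$, which is automatically an element of that set whenever $\zeta_{1,K}<\infty$; thus $V_{1,K}(\zeta_{1,K})=0$ comes directly from the definition, with no extension argument needed.
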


\begin{proof} (a) The definition of $\zeta_{1,K}$, along with the monotonicity of $U_{1,K}$ on $(-\infty,\zeta_{1,K})$ and~\eqref{p030} entails that $U_1(K) = U_{1,K}(\zeta_{1,K})>0$, $V_{1,K}(\zeta_{1,K}) = 0$, and
\begin{equation*}
	 0 \ge \frac{dV_{1,K}}{d\zeta}(\zeta_{1,K}) = U_1(K) \big( 1 - U_1(K)^{2\nu-1}\big).
\end{equation*}
Furthermore, if $U_1(K)=1$, then $(U_{1,K},V_{1,K})(\zeta)=Q_2$ for all $\zeta\in\mathbb{R}$, which contradicts the definition of $l_1(K)$. Consequently, $U_1(K)\ne 1$ and thus $U_1(K)>1$ with $\frac{dV_{1,K}}{d\zeta}(\zeta_{1,K})<0$, as claimed.

\medskip

\noindent (b) Since $\zeta_{1,K}=\infty$, the function $V_{1,K}$ is positive on $\mathbb{R}$ and so is $U_{1,K}$ due to its monotonicity. Then $l_1(K)\subset (0,\infty)^2$ and we argue as in \textbf{Step~3} of the proof of Proposition~\ref{prop.Ksmall} to conclude that $(U_{1,K},V_{1,K})(\zeta)$ converges to $Q_2$ as $\zeta\to\infty$.
\end{proof}

Next, since $U_{1,K}$ is increasing from $(-\infty,\zeta_{1,K})$ onto $(0,U_1(K))$, then it is invertible on that set, so that we may define $b_{1,K} := V_{1,K}\circ U_{1,K}^{-1}$. Then $b_{1,K}$ maps $(0,U_{1}(K))$ onto $V_{1,K}((-\infty,\zeta_{1,K}))$. Moreover, we can extend $b_{1,K}$ by continuity with $b_{1,K}(0)=(m-1)C_1(K)$ and $b_{1,K}(U_1(K))=0$ when $U_1(K)<\infty$, see Lemma~\ref{lem.U1K}. We next define
\begin{equation*}
	\mathcal{B}_{1,K} := \big\{ (u,v) \in (0,U_1(K))\times \real\ :\ 0 < v < b_{1,K}(u) \big\}. 
\end{equation*}
The next result establishes that $\mathcal{B}_{1,K}$ is an invariant region for the system~\eqref{p030} with larger values of $K$.

\begin{proposition}\label{prop.1}
Assume that $K_2>K_1>0$ with $U_1(K_1)<\infty$ and consider $(U_0,V_0)\in\mathcal{B}_{1,K_1}$. Then
\begin{equation*}
	(U,V)(\zeta) = \Psi_{K_2}(\zeta,U_0,V_0)\in\mathcal{B}_{1,K_1}
\end{equation*}
for any $\zeta\in\big[0,\min\{\zeta_{K_2}^{+}(U_0,V_0),\zeta_{K_2}^{\circ}(U_0,V_0)\}\big)$, where
\begin{equation*}
	\zeta_{K_2}^{\circ}(U_0,V_0):=\inf\{\zeta\in(0,\zeta_{K_2}^{+}(U_0,V_0))\ :\ V(\zeta)=0\}.
\end{equation*}
Moreover, if $\zeta_{K_2}^{\circ}(U_0,V_0)<\zeta_{K_2}^{+}(U_0,V_0)$, then $V(\zeta_{K_2}^{\circ}(U_0,V_0))=0$ and $U(\zeta_{K_2}^{\circ}(U_0,V_0))\in(1,U_1(K_1)]$.
\end{proposition}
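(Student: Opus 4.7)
The statement is a standard invariant region argument once one has identified the correct comparison between the two vector fields $\mathbf{S}_{K_1}$ and $\mathbf{S}_{K_2}$. The boundary of $\mathcal{B}_{1,K_1}$ decomposes into three pieces: the left segment $\{0\}\times (0,(m-1)C_1(K_1))$, the bottom segment $(0,U_1(K_1))\times\{0\}$, and the top piece, which is precisely the orbit $l_1(K_1)$ parametrized by the graph $v=b_{1,K_1}(u)$, $u\in (0,U_1(K_1))$. The plan is to show that, under the flow $\Psi_{K_2}$, a trajectory starting in $\mathcal{B}_{1,K_1}$ can only leave through the bottom piece, which is exactly what the stopping time $\zeta_{K_2}^\circ$ encodes.

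I would introduce the exit time $\tau:=\inf\{\zeta\in (0,\min\{\zeta_{K_2}^+,\zeta_{K_2}^\circ\})\,:\,(U,V)(\zeta)\notin \mathcal{B}_{1,K_1}\}$ and assume for contradiction that it is finite and strictly smaller than $\min\{\zeta_{K_2}^+,\zeta_{K_2}^\circ\}$. Since $dU/d\zeta=UV/\nu$, the line $\{u=0\}$ is invariant, so the left boundary cannot be crossed; by definition of $\zeta_{K_2}^\circ$, the bottom boundary is not crossed before $\tau$ either. Hence $(U,V)(\tau)$ must lie on the top piece, i.e.\ on $l_1(K_1)\setminus\{Q_2,(0,(m-1)C_1(K_1))\}$, say at the point $(u_*,b_{1,K_1}(u_*))$ with $u_*\in (0,U_1(K_1))$.

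The core computation is to compare the flow of $\mathbf{S}_{K_2}$ with the tangent to $l_1(K_1)$ at such a point. Since the first component $S_{1,K}(u,v)=uv/\nu$ is independent of $K$ and
\begin{equation*}
	\mathbf{S}_{K_2}(u,v)-\mathbf{S}_{K_1}(u,v)=\bigl(0,(C_1(K_2)-C_1(K_1))\,v\bigr),
\end{equation*}
the outward normal direction $(-b_{1,K_1}'(u_*),1)$ to the graph satisfies
\begin{equation*}
	\bigl\langle \mathbf{S}_{K_2}(u_*,b_{1,K_1}(u_*)),\,(-b_{1,K_1}'(u_*),1)\bigr\rangle=\bigl(C_1(K_2)-C_1(K_1)\bigr)\,b_{1,K_1}(u_*),
\end{equation*}
the $\mathbf{S}_{K_1}$-contribution vanishing because $l_1(K_1)$ is an orbit of~\eqref{p030} with parameter $K_1$. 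By~\eqref{p031}, the map $K\mapsto C_1(K)$ is decreasing, so $C_1(K_2)-C_1(K_1)<0$, while $b_{1,K_1}(u_*)>0$. Therefore the flow $\mathbf{S}_{K_2}$ strictly points inward at $(u_*,b_{1,K_1}(u_*))$, which contradicts the definition of $\tau$ (the trajectory lay strictly inside $\mathcal{B}_{1,K_1}$ on $(0,\tau)$ and reached the top boundary at $\tau$). This monotonicity identity is the one step worth double-checking carefully, but once in hand it closes the invariance argument; the main qualitative obstacle is really just being careful that the graph representation of $l_1(K_1)$ is valid, which is granted by Lemma~\ref{lem.U1K} and the monotonicity of $U_{1,K_1}$ on $(-\infty,\zeta_{1,K_1})$.

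For the final conclusion, assume $\zeta_{K_2}^\circ<\zeta_{K_2}^+$. By continuity and the invariance just established, $(U,V)(\zeta_{K_2}^\circ)\in\overline{\mathcal{B}_{1,K_1}}\cap\{v=0\}$, so $U(\zeta_{K_2}^\circ)\in [0,U_1(K_1)]$ and $V(\zeta_{K_2}^\circ)=0$. Since $V>0$ on $[0,\zeta_{K_2}^\circ)$, one has $dV/d\zeta(\zeta_{K_2}^\circ)\le 0$, and by~\eqref{p030}
\begin{equation*}
	\frac{dV}{d\zeta}(\zeta_{K_2}^\circ)=U(\zeta_{K_2}^\circ)-U(\zeta_{K_2}^\circ)^{2\nu},
\end{equation*}
hence $U(\zeta_{K_2}^\circ)\ge 1$. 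If $U(\zeta_{K_2}^\circ)=1$, then $(U,V)(\zeta_{K_2}^\circ)=Q_2$ is a critical point reached in finite time, contradicting Cauchy--Lipschitz uniqueness (as $\mathbf{S}_{K_2}\in C^1$). Similarly $U(\zeta_{K_2}^\circ)=0$ would force $(U,V)(\zeta_{K_2}^\circ)=Q_0$ in finite time. Thus $U(\zeta_{K_2}^\circ)\in (1,U_1(K_1)]$, which completes the proof.
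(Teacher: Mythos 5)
Your proof is correct and follows essentially the same strategy as the paper: the key step is the identity $\langle \mathbf{S}_{K_2},(-b_{1,K_1}',1)\rangle = (C_1(K_2)-C_1(K_1))\,b_{1,K_1}$ on the curve $v=b_{1,K_1}(u)$, combined with the monotonicity of $K\mapsto C_1(K)$, which is exactly the computation in the paper (there written as $S_{2,K_2}-b_{1,K_1}'S_{1,K_2}$). The exit-time wrapper and the Cauchy--Lipschitz exclusion of the critical point at $\zeta_{K_2}^\circ$ are cosmetic variants of the paper's more direct phrasing.
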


\begin{proof}
Since $(U_0,V_0)\in(0,\infty)^2$ and $V>0$ on $[0,\zeta_{K_2}^{\circ}(U_0,V_0))$, we deduce from~\eqref{p030} that $U$ is increasing on $\big[0,\zeta_{K_2}^{\circ}(U_0,V_0)\big)$ and thus positive. Moreover, for $u\in(0,U_1(K_1))$, there is $\zeta_u\in(-\infty,\zeta_{1,K_1})$ such that $U_{1,K_1}(\zeta_u)=u$ and
$$
b_{1,K_1}(u)=b_{1,K_1}\circ U_{1,K_1}(\zeta_u)=V_{1,K_1}(\zeta_u), \quad b_{1,K_1}'(u)=\frac{dV_{1,K_1}/d\zeta}{dU_{1,K_1}/d\zeta}(\zeta_u).
$$
Consequently,
\begin{equation*}
\begin{split}
S_{2,K_2}(u,b_{1,K_1}(u))&-b_{1,K_1}'(u)S_{1,K_2}(u,b_{1,K_1}(u))\\
&=S_{2,K_2}(u,b_{1,K_1}(u)) -  \frac{dV_{1,K_1}/d\zeta}{dU_{1,K_1}/d\zeta}(\zeta_u) S_{1,K_2}(u,b_{1,K_1}(u))\\
&=\left(S_{2,K_2}-\frac{S_{2,K_1}}{S_{1,K_1}}S_{1,K_2}\right)(U_{1,K_1}(\zeta_u),V_{1,K_1}(\zeta_u))\\
&=(C_1(K_2)-C_1(K_1))V_{1,K_1}(\zeta_u)<0,
\end{split}
\end{equation*}
which shows that the direction of the flow of the system~\eqref{p030} corresponding to $K=K_2$ across the curve
\begin{equation*}
	\mathcal{C}_{1,K_1} := \big\{ (u,v)\in (0,U_1(K_1))\times\mathbb{R}\ :\ v=b_{1,K_1}(u))\big\}
\end{equation*}
points towards the interior of the region $\mathcal{B}_{1,K_1}$. We thus infer that $(U(\zeta),V(\zeta))$ cannot leave the region $\mathcal{B}_{1,K_1}$ for $0 < \zeta < \min\{\zeta_{K_2}^{+}(u_0,v_0),\zeta_{K_2}^{\circ}(u_0,v_0)\}$, as claimed.

Finally, if $\zeta_{K_2}^{\circ}(U_0,V_0)<\zeta_{K_2}^{+}(U_0,V_0)$, then $\zeta_{K_2}^{\circ}(U_0,V_0)<\infty$ and $V(\zeta_{K_2}^{\circ}(U_0,V_0))=0$. On the one hand, $(U,V)\big(\big[0,\zeta_{K_2}^\circ(U_0,V_0)\big)\big)\subset \mathcal{B}_{1,K_1}$ implies that $U\big(\zeta_{K_2}^\circ(U_0,V_0)\big)\le U_1(K_1)$. On the other hand, it follows from the definition of $\zeta_{K_2}^\circ(U_0,V_0)$ and~\eqref{p030} that
\begin{equation}
	0\ge \frac{dV}{d\zeta}(\zeta_{K_2}^{\circ}(U_0,V_0)) = U(\zeta_{K_2}^{\circ}(U_0,V_0)) (1-U(\zeta_{K_2}^{\circ}(U_0,V_0))^{2\nu-1}). \label{al01}
\end{equation}
Assume for contradiction that $\frac{dV}{d\zeta}(\zeta_{K_2}^{\circ}(U_0,V_0))=0$. Then $(U,V)\big(\zeta_{K_2}^{\circ}(U_0,V_0)\big) = Q_2$ due to~\eqref{al01} and the positivity and monotonicity of $U$ on $\big[0,\zeta_{K_2}^\circ(U_0,V_0)\big)$ and thus $(U_0,V_0) = Q_2$, which contradicts the positivity of $V_0$. Consequently, $\frac{dV}{d\zeta}(\zeta_{K_2}^{\circ}(U_0,V_0))<0$ and the positivity of $U$ on $\big[0,\zeta_{K_2}^{\circ}(U_0,V_0)\big]$ entails that $U\big(\zeta_{K_2}^{\circ}(U_0,V_0)\big) > 1$.
\end{proof}

The fact that $U_1(K)<\infty$ for $K>K_1$, provided $U_1(K_1)<\infty$, follows as an immediate consequence of the previous invariance result.

\begin{corollary}\label{cor1}
For all $K>0$, we have $U_1(K)<\infty$ and, if $K_2>K_1$, then there holds $(U_{1,K_2},V_{1,K_2})(\zeta)\in\mathcal{B}_{1,K_1}$ for any $\zeta\in(-\infty,\zeta_{1,K_2})$, as well as $U_1(K_2)\le U_1(K_1)$. Moreover, for  $K\in(0,\infty)$, either $\zeta_{1,K}<\infty$ with $V_{1,K}(\zeta_{1,K})=0$, $\frac{dV_{1,K}}{d\zeta}(\zeta_{1,K})<0$, and $U_{1}(K)>1$, or $\zeta_{1,K}=\infty$ with $\lim\limits_{\zeta\to\infty}V_{1,K}(\zeta)=0$ and $U_1(K)=1$.
\end{corollary}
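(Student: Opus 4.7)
\textbf{Proof plan for Corollary~\ref{cor1}.}

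The plan is to bootstrap the finiteness of $U_1(K)$ from small $K$ (where it follows from Proposition~\ref{prop.Ksmall}) to arbitrary $K>0$, using the invariance of $\mathcal{B}_{1,K_1}$ established in Proposition~\ref{prop.1}, and then to read off the dichotomy from Lemma~\ref{lem.U1K}.

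First, I would pick a reference value $K_1\in(0,K_\star)$. By Proposition~\ref{prop.Ksmall}, the orbit $l_1(K_1)$ meets $(0,\infty)\times\{0\}$ at a finite point, so $\zeta_{1,K_1}<\infty$ and Lemma~\ref{lem.U1K}(a) gives $U_1(K_1)\in(1,\infty)$. Consequently, $b_{1,K_1}$ extends continuously to $[0,U_1(K_1)]$ with $b_{1,K_1}(0)=(m-1)C_1(K_1)$ and $b_{1,K_1}(U_1(K_1))=0$, so that $\mathcal{B}_{1,K_1}$ is a bounded planar region.

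Second, for any $K_2>K_1$ I would show that $(U_{1,K_2},V_{1,K_2})(\zeta)$ enters $\mathcal{B}_{1,K_1}$ for $\zeta$ sufficiently negative. Tracing Lemma~\ref{lem.P1} through the change of variables~\eqref{p029} one obtains
\begin{equation*}
\lim_{\zeta\to-\infty}(U_{1,K_2},V_{1,K_2})(\zeta)=\bigl(0,(m-1)C_1(K_2)\bigr),
\end{equation*}
with $U_{1,K_2}(\zeta)>0$ throughout its domain. Since~\eqref{p031} asserts that $K\mapsto C_1(K)$ is strictly decreasing, $(m-1)C_1(K_2)<(m-1)C_1(K_1)=b_{1,K_1}(0^+)$, and the continuity of $b_{1,K_1}$ at $0$ yields $0<V_{1,K_2}(\zeta)<b_{1,K_1}(U_{1,K_2}(\zeta))$ with $U_{1,K_2}(\zeta)<U_1(K_1)$ for all $\zeta\le\zeta_0$ with $\zeta_0$ sufficiently negative. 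Then Proposition~\ref{prop.1} applied at time $\zeta_0$ propagates the containment to the whole interval $[\zeta_0,\zeta_{1,K_2})$, hence to $(-\infty,\zeta_{1,K_2})$. In particular $U_1(K_2)\le U_1(K_1)<\infty$, which simultaneously yields the announced monotonicity and the finiteness of $U_1(K_2)$.

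Third, finiteness of $U_1(K)$ for all $K>0$ follows by a single split: if $K<K_\star$ apply Proposition~\ref{prop.Ksmall} directly, and if $K\ge K_\star$ apply the previous paragraph with any $K_1\in(0,K_\star)$ and $K_2=K$. Once $U_1(K)<\infty$ is known, the final dichotomy is a direct translation of Lemma~\ref{lem.U1K}(a)--(b), the non-emptiness of the two alternatives being ensured by the definition of $\zeta_{1,K}$.

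The main obstacle is the step showing that the trajectory actually enters $\mathcal{B}_{1,K_1}$: one must carefully transport the asymptotics from Lemma~\ref{lem.P1} through both successive changes of variables to the plane $(U,V)$ and then combine the strict monotonicity of $C_1$ with the continuity of $b_{1,K_1}$ at the left endpoint. All the remaining assertions are rather mechanical consequences of Proposition~\ref{prop.1} and Lemma~\ref{lem.U1K}.
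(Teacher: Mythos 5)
Your plan reproduces the paper's argument: bootstrap finiteness from $K\in(0,K_\star)$ via Proposition~\ref{prop.Ksmall}, propagate containment with Proposition~\ref{prop.1} using the monotonicity of $C_1$ and the asymptotics from Lemma~\ref{lem.P1}, and read the dichotomy off Lemma~\ref{lem.U1K}. The only point to make explicit is that, once finiteness of $U_1(K_1)$ is known for every $K_1>0$, you must rerun your second step with an arbitrary $K_1$ (not just $K_1<K_\star$) to obtain the stated containment and monotonicity for all $K_2>K_1>0$, as the paper does in its final two-pass sentence.
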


\begin{proof}
We first infer from Proposition~\ref{prop.Ksmall} that $U_1(K)<\infty$ for $K\in (0,K_\star)$.

Consider next $K_1>0$ such that $U_1(K_1)<\infty$ and $K_2>K_1$. Since
$$
\lim\limits_{\zeta\to-\infty}(U_{1,K_2},V_{1,K_2})(\zeta)=(0,(m-1)C_1(K_2)),
$$
with $U_{1,K_2}(\zeta)>0$ for $\zeta\in(-\infty,\zeta_{1,K_2})$, and $(m-1)C_1(K_2)<(m-1)C_1(K_1)$, there is $\zeta_2\in(-\infty,\zeta_{1,K_2})$ such that $(U_{1,K_2}(\zeta),V_{1,K_2}(\zeta))\in\mathcal{B}_{1,K_1}$ for any $\zeta\in(-\infty,\zeta_2)$. Then, by Proposition~\ref{prop.1},
\begin{equation*}
	(U_{1,K_2},V_{1,K_2})(\zeta)\in\mathcal{B}_{1,K_1} \;\;\text{ for any }\;\; \zeta\in (-\infty,\zeta_{1,K_2}).
\end{equation*}
Thus, $b_{1,K_2}(u)\le b_{1,K_1}(u)$ for $u\in \big(0,\min\{U_1(K_1),U_1(K_2)\}\big)$, from which we immediately deduce that $U_1(K_2)\le U_1(K_1)$. Applying first this property with $K_1\in (0,K_\star)$ and $K_2>K_1$ and then with arbitrary $K_2>K_1>0$ provide the first two statements of Corollary~\ref{cor1}.

Finally, let $K>0$. Since we have just proved that $U_1(K)<\infty$, the last statement of Corollary~\ref{cor1} readily follows from Lemma~\ref{lem.U1K}.
\end{proof}

Similarly, for $K>0$ and the trajectory $(U_{0,K},V_{0,K})$ in $l_0(K)$ defined in~\eqref{p049}, we define
\begin{equation*}
	\begin{split}
		\zeta_{0,K} & := \sup\{ \zeta\in (\zeta_{0,K}^-,\infty)\ :\ V_{0,K}(\zeta)=0 \} \in [-\infty,\infty), \\
		U_0(K) & := \lim_{\zeta\searrow\zeta_{0,K}} U_{0,K}(\zeta) \in (0,\infty],
	\end{split} 
\end{equation*}
and deduce from~\eqref{p050} that
\begin{equation}
	U_{0,K}>0>V_{0,K} \;\;\text {and }\;\; \frac{dU_{0,K}}{d\zeta}<0 \;\;\text{ on }\;\; (\zeta_{0,K},\infty). \label{p070}
\end{equation}

We first collect preliminary information on $U_0(K)$.
	
\begin{lemma}\label{lem.U0K}
Let $K>0$.
\begin{itemize}
	\item [(a)] If $U_0(K)<\infty$ and $\zeta_{0,K}>-\infty$, then
	\begin{equation*}
		U_{0}(K)>1, \quad V_{0,K}(\zeta_{0,K})=0, \;\text{ and }\;\; \frac{dV_{0,K}}{d\zeta}(\zeta_{0,K})<0.	
	\end{equation*}
	\item [(b)] If $U_0(K)<\infty$ and $\zeta_{0,K}=-\infty$, then $U_{0}(K)=1$ and
	\begin{equation*}
		\lim\limits_{\zeta\to-\infty} \big(U_{0,K},V_{0,K}\big)(\zeta) = Q_2.
	\end{equation*}
	\item [(c)] If $U_0(K)=\infty$ then $\zeta_{0,K}=-\infty$ and $V_{0,K}<0$ on $\mathbb{R}$.
\end{itemize}
\end{lemma}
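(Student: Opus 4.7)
The three cases correspond to qualitatively different global behaviors of the orbit $l_0(K)$ in backward time, and I would treat them in turn. For part~(a), I follow the argument of Lemma~\ref{lem.U1K}(a): by continuity and the definition of $\zeta_{0,K}$, one has $V_{0,K}(\zeta_{0,K})=0$ and $U_{0,K}(\zeta_{0,K})=U_0(K)>0$, and evaluating the second component of~\eqref{p030} at $\zeta_{0,K}$ together with the sign of $V_{0,K}$ immediately to the right yields
\begin{equation*}
0\ge\frac{dV_{0,K}}{d\zeta}(\zeta_{0,K})=U_0(K)\bigl(1-U_0(K)^{2\nu-1}\bigr),
\end{equation*}
so that $U_0(K)\ge 1$; the possibility $U_0(K)=1$ would place the orbit at $Q_2$ for some finite time, forcing $(U_{0,K},V_{0,K})\equiv Q_2$ by uniqueness for~\eqref{p030}, which contradicts the convergence to $Q_0$ as $\zeta\to\infty$. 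Therefore $U_0(K)>1$ and the derivative is strictly negative.

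For part~(b), I assume $U_0(K)<\infty$ and $\zeta_{0,K}=-\infty$, so that $V_{0,K}<0$ and $U_{0,K}$ is strictly decreasing on the whole maximal interval $(\zeta_{0,K}^-,\infty)$, with range in $(0,U_0(K))$. The decisive step is to bound $V_{0,K}$ from below: since $U_{0,K}$ is bounded by $U_0(K)$, the $V$-component of~\eqref{p030} admits an estimate of the form $\frac{dV_{0,K}}{d\zeta}\le -V_{0,K}^2/(m-1)+C_\star|V_{0,K}|+C_\star$, where $C_\star$ depends only on $K$, $N$, $m$, $q$ and $U_0(K)$. Choosing $M_\star$ large enough so that $V\le -M_\star$ implies $\frac{dV_{0,K}}{d\zeta}\le -V_{0,K}^2/(2(m-1))$, a direct comparison with the scalar ODE $\dot{y}=-y^2/(2(m-1))$ forces forward finite-time blow-up of $V_{0,K}$, which is forbidden by the global existence on $(\zeta_{0,K}^-,\infty)$. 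Hence $V_{0,K}\ge -M_\star$ and the orbit is bounded in backward time; standard continuation arguments then give $\zeta_{0,K}^-=-\infty$. The Poincar\'e-Bendixson theorem next ensures that the $\alpha$-limit set $\alpha_{0,K}$ is either a periodic orbit or contains a critical point; the monotonicity of $U_{0,K}$ rules out the periodic alternative, exactly as in \textbf{Step~2} of the proof of Proposition~\ref{prop.Ksmall}, and forces every point of $\alpha_{0,K}$ to have first coordinate $U_0(K)$. Comparing with the first coordinates $\{0,0,1\}$ of $\{Q_0,Q_{1,K},Q_2\}$ and noting that $U_0(K)\ge U_{0,K}(0)>0$ excludes $Q_0$ (and that $Q_{1,K}$ has $V>0$), one concludes $U_0(K)=1$ and $\alpha_{0,K}=\{Q_2\}$.

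For part~(c), suppose $U_0(K)=\infty$. If $\zeta_{0,K}$ were finite, it would lie strictly above $\zeta_{0,K}^-$, since $\zeta_{0,K}$ is a supremum of $V$-zeros taken inside $(\zeta_{0,K}^-,\infty)$, and hence in the interior of the maximal interval; continuity would then give the impossible equality $U_{0,K}(\zeta_{0,K})=U_0(K)=\infty$. Therefore $\zeta_{0,K}=-\infty$, meaning that $V_{0,K}$ has no zero on the maximal interval, and combining this with the negativity of $V_{0,K}$ near $+\infty$ furnished by~\eqref{p050} yields $V_{0,K}<0$ throughout. I expect the main obstacle in the whole proof to be the a priori lower bound on $V_{0,K}$ established in part~(b), on which the applicability of the Poincar\'e-Bendixson theorem hinges; once this bound is secured, the identification of the $\alpha$-limit via the monotonicity of $U_{0,K}$ and the extraction of part~(c) from the very definition of $\zeta_{0,K}$ are essentially routine.
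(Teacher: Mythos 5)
Your proof is correct and follows the same overall strategy as the paper: part~(a) mirrors Lemma~\ref{lem.U1K}(a), part~(c) exploits the definition of $\zeta_{0,K}$ and the logarithmic derivative of $U_{0,K}$, and part~(b) reduces to an a priori lower bound on $V_{0,K}$ followed by a Poincar\'e--Bendixson argument as in Step~2 of Proposition~\ref{prop.Ksmall}. The one genuine difference lies in how you obtain the lower bound on $V_{0,K}$ in part~(b). The paper absorbs the quadratic and cross terms with Young's inequality to get a \emph{linear} differential inequality $\frac{dV_{0,K}}{d\zeta} - C_1(K)V_{0,K}\le C(m,K)$, which it integrates over $(\zeta,0)$ to read off directly a uniform lower bound as $\zeta\to-\infty$. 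You instead keep the quadratic term and run a Riccati comparison: once $V_{0,K}\le -M_\star$, you would have $\frac{dV_{0,K}}{d\zeta}\le -V_{0,K}^2/2(m-1)$, which forces finite-time blow-down forward in $\zeta$ and contradicts the known existence on $(\zeta_1,\infty)$. Both arguments are sound; the paper's is a touch more direct and does not need to invoke forward global existence, whereas yours trades the linearization for a barrier-type blow-up criterion. Either way, the conclusion that $\zeta_{0,K}^-=-\infty$ together with boundedness in backward time and the monotonicity of $U_{0,K}$ pins $\alpha_{0,K}$ onto the line $\{u=U_0(K)\}$, and the identification with $Q_2$ (hence $U_0(K)=1$) then goes exactly as you describe.
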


\begin{proof} (a) The proof is similar to that of part~(a) of Lemma~\ref{lem.U1K}, to which we refer.
		
\medskip
		
\noindent (b) By~\eqref{p070}, $V_{0,K}<0$ and $0<U_{0,K}<U_0(K)$ on $\mathbb{R}$ and we infer from~\eqref{p030} and Young's inequality that, for $\zeta\in (-\infty,0)$,
\begin{align*}
	\frac{dV_{0,K}}{d\zeta} - C_1(K) V_{0,K} & \le - \frac{V_{0,K}^2}{m-1} + U_0(K) - C_2 U_0(K)^\nu V_{0,K} \\
	& \le \frac{V_{0,K}^2}{2(m-1)} + \frac{C_2^2 U_0(K)^{2\nu}}{2} - \frac{V_{0,K}^2}{m-1} + U_0(K) \\
	& \le  C(m,K) := U_0(K) +  \frac{C_2^2 U_0(K)^{2\nu}}{2}.
\end{align*}
Integrating the above differential inequality over $(\zeta,0)$ gives
\begin{equation*}
	V_{0,K}(0) e^{C_1(K)\zeta} + \frac{C(m,K)}{C_1(K)} \left( e^{C_1(K)\zeta} - 1 \right) \le V_{0,K}(\zeta), \qquad \zeta\in (-\infty,0),
\end{equation*}
from which we deduce that $V_{0,K}$ is bounded in $(-\infty,0)$. We then argue as in the end of \textbf{Step~2} in the proof of Proposition~\ref{prop.Ksmall} to conclude that $(U_{0,K},V_{0,K})(\zeta)$ converges to $Q_2$ as $\zeta\to-\infty$.

\medskip

\noindent (c) It follows from~\eqref{p030} and~\eqref{p070} that, for $\zeta\in (\zeta_{0,K},0)$,
\begin{equation*}
	\ln{U_{0,K}(0)} - \ln{U_{0,K}(\zeta)} = \frac{1}{\nu} \int_\zeta^0 V_{0,K}(\zeta_*)\, d\zeta_* = - \frac{1}{\nu} \int_\zeta^0 \big|V_{0,K}(\zeta_*)\big|\, d\zeta_*.
\end{equation*}
Letting $\zeta\searrow\zeta_{0,K}$ gives
\begin{equation*}
	\int_{\zeta_{0,K}}^0 \big|V_{0,K}(\zeta_*)\big|\, d\zeta_* = \infty,
\end{equation*}
hence $\zeta_{0,K}=-\infty$ owing to the continuity of $V_{0,K}$. The negativity of $V_{0,K}$ on $\mathbb{R}$ then follows from the definition of $\zeta_{0,K}$.
\end{proof}
	
Using again~\eqref{p070}, we note that $U_{0,K}$ is decreasing from $(\zeta_{0,K},\infty)$ onto $(0,U_0(K))$ and is thus invertible on that set. We then define $b_{0,K} := V_{0,K}\circ U_{0,K}^{-1}$, which maps $(0,U_{0,K})$ on $V_{0,K}((\zeta_{0,K},\infty))$. Moreover, we extend $b_{0,K}$ by continuity with $b_{0,K}(0)=b_{0,K}(U_0(K))=0$ when $U_0(K)<\infty$, see Lemma~\ref{lem.U0K}. We next define
\begin{equation*}
	\begin{split}
		\mathcal{B}_{0,K}^- & := \big\{ (u,v) \in (0,U_0(K))\times \real\ :\ v < b_{0,K}(u) \big\}, \\
		\mathcal{B}_{0,K}^+ & := \big\{ (u,v) \in (0,U_0(K))\times \real\ :\ 0> v > b_{0,K}(u) \big\},
	\end{split} 
\end{equation*}
and establish the invariance of the regions $\mathcal{B}_{0,K}^{-}$ and $\mathcal{B}_{0,K}^{+}$.

\begin{proposition}\label{prop.0}
(a) Let $K_2>K_1>0$ and consider $(U_0,V_0) \in \mathcal{B}_{0,K_1}^{-}$. Then
\begin{equation*}
	(U,V)(\zeta) = \Psi_{K_2}(\zeta,U_0,V_0) \in\mathcal{B}_{0,K_1}^{-}
\end{equation*}
for any $\zeta\in \big( \max\{\zeta_{K_2}^{-}(U_0,V_0), \zeta_{K_2}^{K_1}(U_0,V_0)\},0\big)$, where
\begin{equation*}
\begin{split}
	\zeta_{K_2}^{K_1}(U_0,V_0) & := - \infty \;\text{ if }\; U_0(K_1)=\infty, \\ \zeta_{K_2}^{K_1}(U_0,V_0) & := \sup\{\zeta\in(\zeta_{K_2}^{-}(U_0,V_0),0)\ :\  U(\zeta)=U_0(K_1)\} \;\text{ otherwise }.
\end{split}
\end{equation*}

(b) Let $K_1>K_2>0$ and consider $(U_0,V_0) \in \mathcal{B}_{0,K_1}^{+}$. Then
\begin{equation*}
	(U,V)(\zeta) = \Psi_{K_2}(\zeta,U_0,V_0) \in\mathcal{B}_{0,K_1}^{+}
\end{equation*}
for any $\zeta\in \big(\zeta_{K_2}^{\circ,-}(U_0,V_0),0\big)$,
where
\begin{equation*}
\zeta_{K_2}^{\circ,-}(U_0,V_0):=\sup\{\zeta\in(\zeta_{K_2}^{-}(U_0,V_0),0): V(\zeta)=0\}.
\end{equation*}
\end{proposition}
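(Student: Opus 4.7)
The strategy is to mimic the proof of Proposition~\ref{prop.1} by establishing the transversality of the flow of $\mathbf{S}_{K_2}$ across the curve $v=b_{0,K_1}(u)$, $u\in(0,U_0(K_1))$, which bounds the two regions $\mathcal{B}_{0,K_1}^{\pm}$. For $u\in(0,U_0(K_1))$ we set $\zeta_u:=U_{0,K_1}^{-1}(u)$. Since $l_0(K_1)$ is an orbit of~\eqref{p030} with $K=K_1$, the identity $b_{0,K_1}'(u)\,S_{1,K_1}(u,b_{0,K_1}(u))=S_{2,K_1}(u,b_{0,K_1}(u))$ holds, and this, together with $S_{1,K_2}\equiv S_{1,K_1}$ and $S_{2,K_2}-S_{2,K_1}=(C_1(K_2)-C_1(K_1))v$, gives
$$
S_{2,K_2}(u,b_{0,K_1}(u))-b_{0,K_1}'(u)\,S_{1,K_2}(u,b_{0,K_1}(u))=(C_1(K_2)-C_1(K_1))\,V_{0,K_1}(\zeta_u),
$$
exactly as in Proposition~\ref{prop.1}. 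Combining the negativity $V_{0,K_1}<0$ from~\eqref{p070} with the strict monotonicity of $K\mapsto C_1(K)$ from~\eqref{p031}, the right-hand side is strictly positive when $K_2>K_1$ (part~(a)) and strictly negative when $K_2<K_1$ (part~(b)).

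The invariance is then obtained via a maximality argument on the signed vertical gap $\phi(\zeta):=V(\zeta)-b_{0,K_1}(U(\zeta))$ along the trajectory $(U,V)=\Psi_{K_2}(\cdot,U_0,V_0)$. In case~(a), $\phi(0)<0$, and assuming for contradiction the existence of
$$
\zeta_\star:=\sup\{\zeta\in(\max\{\zeta_{K_2}^{-}(U_0,V_0),\zeta_{K_2}^{K_1}(U_0,V_0)\},0)\ :\ \phi(\zeta)=0\}
$$
forces $\phi(\zeta_\star)=0$ and $\phi<0$ on $(\zeta_\star,0]$, whence $\phi'(\zeta_\star)\le 0$; this contradicts the strict positivity of the transversality identity above at $(U(\zeta_\star),V(\zeta_\star))$, since $U(\zeta_\star)\in(0,U_0(K_1))$ throughout the admissible interval. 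Case~(b) is symmetric with opposite signs, the right-hand boundary of the maximality interval being $\zeta_{K_2}^{\circ,-}(U_0,V_0)$ instead. The remaining pieces of the boundary are handled as follows: the vertical line $u=0$ is unreachable since $dU/d\zeta=UV/\nu$ preserves the sign of $U$; in part~(a), the vertical line $u=U_0(K_1)$ (only present when $U_0(K_1)<\infty$) is precisely where $\zeta_{K_2}^{K_1}$ is designed to terminate the analysis; and in part~(b) the $u$-axis is the segment whose first backward crossing defines $\zeta_{K_2}^{\circ,-}$.

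The main technical point is the careful bookkeeping of the backward-in-time direction (opposite to that of Proposition~\ref{prop.1}) and the uniform handling of the degenerate case $U_0(K_1)=\infty$ in part~(a), where the convention $\zeta_{K_2}^{K_1}=-\infty$ absorbs the ambiguity so that the argument proceeds without modification; the extensions of $b_{0,K_1}$ to the endpoints already recorded before the statement suffice to make the contradiction on $\phi$ rigorous. No new analytical difficulty is expected beyond what has already been developed in the preceding parts of Section~\ref{subsec.irm}.
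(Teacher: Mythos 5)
Your proposal is correct and follows essentially the same route as the paper: both reduce to the transversality identity $S_{2,K_2}(u,b_{0,K_1}(u))-b_{0,K_1}'(u)S_{1,K_2}(u,b_{0,K_1}(u))=(C_1(K_2)-C_1(K_1))\,b_{0,K_1}(u)$ and then read off the sign from the monotonicity of $K\mapsto C_1(K)$ and the negativity of $b_{0,K_1}$, the only difference being that you rigorize the "flow points inward" deduction with an explicit first-crossing argument on $\phi=V-b_{0,K_1}(U)$ where the paper simply writes "we omit the details" after citing Proposition~\ref{prop.1}. Your bookkeeping of the backward time direction, the role of $\zeta_{K_2}^{K_1}$ and $\zeta_{K_2}^{\circ,-}$, and the unreachability of $u=0$ all match what the paper's proof implicitly relies on.
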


\begin{proof}
The proof is completely similar to that of Proposition~\ref{prop.1}, noticing that, for any $u\in(0,U_0(K_1))$ we have
$$
S_{2,K_2}(u,b_{0,K_1}(u))-b_{0,K_1}'(u)S_{1,K_2}(u,b_{0,K_1}(u))=(C_1(K_2)-C_1(K_1))b_{0,K_1}(u),
$$
and the conclusion follows in both cases from the monotonicity of $K\mapsto C_1(K)$, the negativity of $b_{0,K_1}$, and the fact that $U$ is decreasing on $\big( \zeta_{K_2}^{\circ,-}(U_0,V_0),0\big)$ due to the non-positivity of $V$ on that interval. We omit the details.
\end{proof}

Particularizing the previous result to the trajectory $(U_{0,K},V_{0,K})$, we obtain the following consequence.

\begin{corollary}\label{cor0}
Let $K_2>K_1$. Then there is $\zeta_{0,K_2}^{K_1}\in [-\infty,\infty)$ such that $(U_{0,K_2},V_{0,K_2})(\zeta)\in\mathcal{B}_{0,K_1}^{-}$ for any $\zeta\in \big(\zeta_{0,K_2}^{K_1},\infty\big)$. In particular, $U_0(K_2)=\infty$ if $U_0(K_1)=\infty$ and $U_0(K_2)\geq U_0(K_1)$ otherwise.
\end{corollary}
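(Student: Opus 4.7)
The plan is to follow the blueprint of the proof of Corollary~\ref{cor1}, but applied to the trajectory $(U_{0,K_2},V_{0,K_2})$ and the invariant region $\mathcal{B}_{0,K_1}^-$ near the critical point $Q_0$. First I would verify that $(U_{0,K_2},V_{0,K_2})(\zeta)$ enters $\mathcal{B}_{0,K_1}^-$ for all sufficiently large $\zeta$. By~\eqref{p050} applied to both trajectories $l_0(K_1)$ and $l_0(K_2)$, we have
$$
\lim_{\zeta\to\infty} \frac{V_{0,K_j}(\zeta)}{U_{0,K_j}(\zeta)} = -\frac{1}{C_1(K_j)}, \qquad j\in\{1,2\},
$$
and the identity for $j=1$ also yields $b_{0,K_1}(u)/u\to -1/C_1(K_1)$ as $u\to 0^+$. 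Since $K\mapsto C_1(K)$ is decreasing by~\eqref{p031} and $K_2>K_1$, one has the strict inequality $-1/C_1(K_2)<-1/C_1(K_1)<0$, from which $V_{0,K_2}(\zeta)<b_{0,K_1}(U_{0,K_2}(\zeta))$ for all $\zeta$ large enough.

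Next I would fix $\zeta_0$ large enough that $(U_{0,K_2},V_{0,K_2})(\zeta_0)\in\mathcal{B}_{0,K_1}^-$ and invoke Proposition~\ref{prop.0}(a) with this datum, after a time translation. The proposition provides $\zeta_{0,K_2}^{K_1}\in[-\infty,\zeta_0)$ such that $(U_{0,K_2},V_{0,K_2})(\zeta)\in\mathcal{B}_{0,K_1}^-$ for every $\zeta\in(\zeta_{0,K_2}^{K_1},\infty)$; this $\zeta_{0,K_2}^{K_1}$ equals $-\infty$ when $U_0(K_1)=\infty$, and otherwise is the supremum of the set of backward times at which $U_{0,K_2}$ attains the value $U_0(K_1)$, with the convention $\sup\emptyset=-\infty$. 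Since $\mathcal{B}_{0,K_1}^-\subset\mathbb{R}\times(-\infty,0)$, the last zero $\zeta_{0,K_2}$ of $V_{0,K_2}$ then automatically satisfies $\zeta_{0,K_2}\le\zeta_{0,K_2}^{K_1}$.

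The particular assertions are derived case by case. When $U_0(K_1)<\infty$ and $\zeta_{0,K_2}^{K_1}>-\infty$, continuity gives $U_{0,K_2}(\zeta_{0,K_2}^{K_1})=U_0(K_1)$, and combining this with $\zeta_{0,K_2}\le\zeta_{0,K_2}^{K_1}$ and the monotonicity~\eqref{p070} of $U_{0,K_2}$ yields $U_0(K_2)\ge U_0(K_1)$. When $U_0(K_1)=\infty$, the trajectory lies in $\mathcal{B}_{0,K_1}^-\subset\{v<0\}$ throughout its domain; neither alternative~(a) of Lemma~\ref{lem.U0K} (which requires $V_{0,K_2}$ to vanish at a finite $\zeta_{0,K_2}$) nor alternative~(b) (which would force $V_{0,K_2}\to 0$ as $\zeta\to-\infty$ and hence $V_{0,K_2}>b_{0,K_1}(U_{0,K_2})$ eventually, a contradiction) can hold, so alternative~(c) applies and $U_0(K_2)=\infty$.

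The main obstacle will be excluding the residual scenario $U_0(K_1)<\infty$ with $\zeta_{0,K_2}^{K_1}=-\infty$, in which the trajectory would remain trapped in $\mathcal{B}_{0,K_1}^-\cap\{0<u<U_0(K_1)\}$ for all backward times without reaching the right vertical boundary $\{U_0(K_1)\}\times(-\infty,0)$. I expect this to be handled by a Poincar\'e--Bendixson type argument in the spirit of Step~2 of the proof of Proposition~\ref{prop.Ksmall}: the only critical points of~\eqref{p030} that could possibly lie in $\overline{\mathcal{B}_{0,K_1}^-}$ are $Q_0$ and $Q_2$, and $Q_2$ belongs to $\overline{\mathcal{B}_{0,K_1}^-}$ only when $U_0(K_1)=1$, a configuration in which the conclusion $U_0(K_2)\ge 1 = U_0(K_1)$ still holds.
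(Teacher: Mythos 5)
Your Step~1 (entering $\mathcal{B}_{0,K_1}^-$ via the asymptotic slopes $b_{0,K_j}(u)/u\to -1/C_1(K_j)$ and the monotonicity of $C_1$, then invoking Proposition~\ref{prop.0}(a)) and your handling of the two ``generic'' cases (i) and (ii) via Lemma~\ref{lem.U0K} are exactly what the paper does. Where you differ is in the organization: the paper makes a single contradiction hypothesis $U_0(K_2)<U_0(K_1)$ and lets~\eqref{p081} drive everything, which has the effect of folding your ``residual'' case~(iii) into the same two-line dichotomy from Lemma~\ref{lem.U0K} already used in case~(ii). Your case-split is perfectly workable, but case~(iii) is left open.

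Concretely, the Poincar\'e--Bendixson argument you sketch for case~(iii) is not yet a proof. First, $\mathcal{B}_{0,K_1}^-$ is unbounded in the $v$-direction, so it is not a priori a compact set containing the backward trajectory: you must either rule out backward finite-time blow-up of $V_{0,K_2}$, or show that a blow-up with $U_{0,K_2}$ bounded above (as it is, by $U_0(K_1)<\infty$) is incompatible with $dU/d\zeta=UV/\nu$. Second, once a critical point $\alpha$-limit is obtained, you do not exclude $Q_0$; that requires a short extra argument (e.g., monotonicity of $U_{0,K_2}$ on all of $\mathbb{R}$ plus $U_{0,K_2}\to 0$ at $+\infty$ would force $U_{0,K_2}\equiv 0$). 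Neither step is hard, but they are not written. The cleanest repair is to notice that in case~(iii) you already have $V_{0,K_2}<0$ on the whole backward domain, hence $\zeta_{0,K_2}=-\infty$; then Lemma~\ref{lem.U0K}(b)--(c) applies directly without any fresh Poincar\'e--Bendixson work. Alternative~(c) gives $U_0(K_2)=\infty\ge U_0(K_1)$; alternative~(b) gives $U_0(K_2)=1$ and $(U_{0,K_2},V_{0,K_2})(\zeta)\to Q_2$ as $\zeta\to-\infty$, so passing to the limit in $V_{0,K_2}<b_{0,K_1}\circ U_{0,K_2}$ yields $0\le b_{0,K_1}(1)$, which forces $U_0(K_1)=1$ as well. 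This is precisely the mechanism you used to kill alternative~(b) in case~(ii), so the gap is really an oversight in applying the same lemma one more time rather than inventing a new dynamical-systems argument.
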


\begin{proof}
Since
$$
\lim\limits_{\zeta\to\infty}\frac{V_{0,K_2}(\zeta)}{U_{0,K_2}(\zeta)}=-\frac{1}{C_1(K_2)}<-\frac{1}{C_1(K_1)}
=\lim\limits_{\zeta\to\infty}\frac{V_{0,K_1}(\zeta)}{U_{0,K_1}(\zeta)},
$$
we have
\begin{equation*}
	\lim\limits_{u\to 0} \frac{b_{0,K_2}(u)}{u} = - \frac{1}{C_1(K_2)} < - \frac{1}{C_1(K_1)} = \lim\limits_{u\to 0} \frac{b_{0,K_1}(u)}{u}.
\end{equation*}
Consequently, $b_{0,K_2}<b_{0,K_1}$ in a right neighborhood of $u=0$, and we readily infer that $(U_{0,K_2},V_{0,K_2})(\zeta)\in\mathcal{B}_{0,K_1}^{-}$ for $\zeta\in(\zeta_0,\infty)$ for some $\zeta_0$ sufficiently large. The first statement of Corollary~\ref{cor0} then follows from part~(a) of Proposition~\ref{prop.0} with $\zeta_{0,K_2}^{K_1} := \zeta_{K_2}^{K_1}\big(U_{0,K_2}(\zeta_0),V_{0,K_2}(\zeta_0)\big)$.

Assume next for contradiction that $U_0(K_2)<U_0(K_1)$, which implies in particular that $U_0(K_2)<\infty$. Then
\begin{equation}
	V_{0,K_2}(\zeta) < b_{0,K_1}(\zeta) < 0, \qquad \zeta\in \big(\zeta_{0,K_2}^{K_1},\infty\big), \label{p081}
\end{equation}
from which we deduce that $\zeta_{0,K_2}\le \zeta_{0,K_2}^{K_1}$. The monotonicity of $U_{0,K_2}$ on $(\zeta_{0,K_2},\infty)$ then guarantees that $U_{0,K_2}(\zeta)<U_0(K_2)<U_0(K_1)$ for $\zeta\in \big(\zeta_{0,K_2}^{K_1},\infty)$ and thus $\zeta_{0,K_2}^{K_1} =\zeta_{0,K_2}$. Now,either $\zeta_{0,K_2} > -\infty$ and we infer from~\eqref{p081} and Lemma~\ref{lem.U0K} that $0<b_{0,K_1}(U_0(K_2))<0$, and a contradiction. Or $\zeta_{0,K_2}=-\infty$ and it follows from~\eqref{p081} and Lemma~\ref{lem.U0K} that $U_0(K_1)>U_0(K_2)=1$ and $0\le b_{0,K_1}(1)<0$, and again a contradiction. Therefore, $U_0(K_2)\ge U_0(K_1)$ as claimed.
\end{proof}

Further consequences of Proposition~\ref{prop.0} and Corollary~\ref{cor0} are collected in the next result.

\begin{corollary}\label{cor00}
Let $K>0$.
\begin{itemize}
	\item [(a)] if $U_0(K)\in (1,\infty)$, then $\zeta_{0,K}>-\infty$ with $V_{0,K}(\zeta_{0,K})=0$ and $\frac{dV_{0,K}}{d\zeta}(\zeta_{0,K})<0$;
	\item [(b)] if $U_0(K)=1$, then $\zeta_{0,K}=-\infty$, $V_{0,K}<0$ on $\mathbb{R}$, and $l_0(K)$ is a complete trajectory connecting $Q_2$ to $Q_0$. In particular, $l_0(K) \subset W_u(Q_2)$ and $K\in (0,K_u]$, where $K_u$ is defined in Lemma~\ref{lem.Q2}.
\end{itemize}
\end{corollary}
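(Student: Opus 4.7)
The plan is to extract both statements directly from Lemma~\ref{lem.U0K}, the sign information~\eqref{p070}, and the asymptotic behavior established in Lemma~\ref{lem.P0}, and then to isolate the critical-point classification at $Q_2$ for the final assertion in~(b).

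For part~(a), I would argue by a dichotomy. The assumption $U_0(K)\in(1,\infty)$ gives $U_0(K)<\infty$ together with $U_0(K)\neq 1$. If we had $\zeta_{0,K}=-\infty$, then Lemma~\ref{lem.U0K}(b) would force $U_0(K)=1$, contradicting $U_0(K)>1$. Hence $\zeta_{0,K}>-\infty$ and the remaining conclusions $V_{0,K}(\zeta_{0,K})=0$ and $\frac{dV_{0,K}}{d\zeta}(\zeta_{0,K})<0$ follow directly from Lemma~\ref{lem.U0K}(a).

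For part~(b), start again with Lemma~\ref{lem.U0K}: since $U_0(K)=1<\infty$, case~(a) of that lemma is incompatible (it would require $U_0(K)>1$), so we must be in case~(b), giving $\zeta_{0,K}=-\infty$ and $\lim_{\zeta\to-\infty}(U_{0,K},V_{0,K})(\zeta)=Q_2$. Combining $\zeta_{0,K}=-\infty$ with~\eqref{p070} yields $V_{0,K}<0$ on all of $\mathbb{R}$. The convergence $\lim_{\zeta\to\infty}(U_{0,K},V_{0,K})(\zeta)=Q_0$ is already recorded in~\eqref{p050}, so $l_0(K)$ is a complete heteroclinic orbit from $Q_2$ to $Q_0$. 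In particular, as a trajectory emerging from $Q_2$, it lies in $W_u(Q_2)$.

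The only non-routine piece is the restriction $K\in(0,K_u]$. The idea is that the behavior of $l_0(K)$ near $Q_2$ already prescribes the local type of $Q_2$. Since $l_0(K)$ tends to $Q_2$ as $\zeta\to-\infty$, $Q_2$ must have a nontrivial unstable manifold, which rules out the stable cases $K\in(K_f,K_s)$ and $K\in[K_s,\infty)$ from Lemma~\ref{lem.Q2}. It remains to exclude the unstable focus case $K\in(K_u,K_f)$: there the linearization at $Q_2$ has complex conjugate eigenvalues with positive real parts, so every orbit leaving $Q_2$ as $\zeta\to-\infty$ (equivalently, entering $Q_2$ in backward time) spirals, and therefore $V_{0,K}$ must change sign in every neighborhood of $Q_2$. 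This contradicts the strict negativity of $V_{0,K}$ on $\mathbb{R}$ just proved. Consequently $Q_2$ must be an unstable node, and Lemma~\ref{lem.Q2} yields $K\in(0,K_u]$. I expect this last exclusion of the unstable focus, via the oscillation of any orbit entering a focus, to be the only non-mechanical step.
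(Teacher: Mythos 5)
Your proposal is correct and follows essentially the same route as the paper: both parts are extracted from Lemma~\ref{lem.U0K} together with~\eqref{p070} and~\eqref{p050}, and the restriction $K\in(0,K_u]$ is obtained by ruling out the stable node/focus cases via $l_0(K)\subset W_u(Q_2)$ and the unstable focus case via the oscillation a spiral would force on $V_{0,K}$, which is exactly what the paper's terse appeal to ``Lemma~\ref{lem.Q2} and the definition of a focus'' encodes.
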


\begin{proof}
The first statement of Corollary~\ref{cor00} follows from Lemma~\ref{lem.U0K}.
	
Let us next consider the case where $K>0$ is such that $U_0(K)=1$. According to Lemma~\ref{lem.U0K}, we have $\zeta_{0,K}=-\infty$ in that case and $(U_{0,K},V_{0,K})(\zeta)$ converges to $Q_2$ as $\zeta\to-\infty$. Then $l_0(K)$ is a heteroclinic orbit of~\eqref{p030} connecting $Q_2$ and $Q_0$ which thus lies in $W_u(Q_2)$. Such a situation only occurs when $K\in (0,K_u]$ according to Lemma~\ref{lem.Q2} and the definition of a focus.
\end{proof}

We need one more preparatory result related to the orbit $l_0(K)$ for some $K>0$; more precisely, we have:

\begin{lemma}\label{lem.amplitude}
If $K>0$ is such that $U_0(K)\in (1,\infty)$, then $U_{0,K}(\zeta)\le U_0(K)$ for all $\zeta\in(\zeta_{0,K}^-,\infty)$. In other words, $U_0(K)$ is the highest value of the coordinate $u$ attained along the trajectory $l_0(K)$.
\end{lemma}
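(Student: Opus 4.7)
The plan is to argue by contradiction, relying on the uniqueness of orbits for the autonomous planar system~\eqref{p030}. Suppose that there exists $\zeta^{\star} \in (\zeta_{0,K}^{-}, \zeta_{0,K})$ with $U_{0,K}(\zeta^{\star}) > U_0(K)$. First I would locate a clean ``crossing level'' by setting $\bar{\zeta} := \inf\{\zeta \in (\zeta^{\star}, \zeta_{0,K}] : U_{0,K}(\zeta) \le U_0(K)\}$. From the second-order expansion $\frac{d^2 U_{0,K}}{d\zeta^2}(\zeta_{0,K}) = \frac{U_0(K)}{\nu}\,\frac{dV_{0,K}}{d\zeta}(\zeta_{0,K}) < 0$ (thanks to Corollary~\ref{cor00}(a)), we know $U_{0,K} < U_0(K)$ in a left neighborhood of $\zeta_{0,K}$, so $\bar{\zeta} < \zeta_{0,K}$, $U_{0,K}(\bar{\zeta}) = U_0(K)$ by continuity, and $U_{0,K} > U_0(K)$ on $(\zeta^{\star}, \bar{\zeta})$. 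In particular $V_{0,K}(\bar{\zeta}) \le 0$. The subtle equality case $V_{0,K}(\bar{\zeta}) = 0$ is ruled out because $U_0(K) > 1$ forces $\frac{dV_{0,K}}{d\zeta}(\bar{\zeta}) = U_0(K)(1 - U_0(K)^{2\nu-1}) < 0$, which would make $\bar{\zeta}$ a \emph{strict} local maximum of $U_{0,K}$, incompatible with $U_{0,K} > U_0(K)$ just to the left. Hence $V_{0,K}(\bar{\zeta}) < 0$ strictly.

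Next I would isolate two arcs of the orbit $l_0(K)$ in the half-plane $\{V \le 0\}$ that share a common $u$-range. Let $\zeta_1 := \inf\{\zeta > \bar{\zeta} : V_{0,K}(\zeta) = 0\} \le \zeta_{0,K}$. On $(\bar{\zeta}, \zeta_1)$ one has $V_{0,K} < 0$, so $U_{0,K}$ is strictly decreasing from $U_0(K)$ down to $U_{0,K}(\zeta_1)$, and the sign condition $\frac{dV_{0,K}}{d\zeta}(\zeta_1) \ge 0$ at that upward crossing gives $U_{0,K}(\zeta_1) \le 1 < U_0(K)$; I shall call the resulting piece \emph{Piece A}. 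By~\eqref{p070}, the piece $[\zeta_{0,K}, \infty)$ lies in $\{V < 0\}$ with $U_{0,K}$ strictly decreasing from $U_0(K)$ to $0$, and I shall call this \emph{Piece C}. Strict monotonicity of $U_{0,K}$ on each piece allows me to invert and write $V = V_A(U)$ and $V = V_C(U)$ on the common interval $[U_{0,K}(\zeta_1), U_0(K)]$.

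The heart of the argument is then a sign-change analysis of the continuous function $V_A - V_C$. At $U = U_0(K)$ one has $V_A(U_0(K)) = V_{0,K}(\bar{\zeta}) < 0 = V_C(U_0(K))$, while at $U = U_{0,K}(\zeta_1)$ one has $V_A(U_{0,K}(\zeta_1)) = 0 > V_C(U_{0,K}(\zeta_1))$, since Piece~C reaches the smaller value $U_{0,K}(\zeta_1) < U_0(K)$ only at some time $\zeta > \zeta_{0,K}$ where $V_{0,K} < 0$. The intermediate value theorem produces $U^{\star} \in (U_{0,K}(\zeta_1), U_0(K))$ at which Piece~A and Piece~C pass through the same point $(U^{\star}, V_A(U^{\star}))$. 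The two visiting times lie respectively in $(\bar{\zeta}, \zeta_1) \subset (-\infty, \zeta_{0,K})$ and in $(\zeta_{0,K}, \infty)$, so they are distinct, and uniqueness for the autonomous system~\eqref{p030} would force $l_0(K)$ to be periodic with nonzero period. This is incompatible with the convergence $(U_{0,K}, V_{0,K})(\zeta) \to Q_0$ as $\zeta \to \infty$ established in Lemma~\ref{lem.P0}, and the contradiction yields $U_{0,K}(\zeta) \le U_0(K)$ for every $\zeta < \zeta_{0,K}$; the bound for $\zeta \ge \zeta_{0,K}$ is already clear from~\eqref{p070}.

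The main obstacle I anticipate is the book-keeping in the first step: ruling out $V_{0,K}(\bar{\zeta}) = 0$ and guaranteeing that $\zeta_1$ is well defined with $U_{0,K}(\zeta_1) \le 1 < U_0(K)$. Both rely on the standing hypothesis $U_0(K) > 1$; without it the clean arc decomposition used in the second step would collapse, as $\bar{\zeta}$ could accumulate with zero-crossings of $V_{0,K}$ and Piece~A could degenerate.
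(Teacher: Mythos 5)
Your proof is correct, and it takes a genuinely different route from the paper's. The paper works backward along the orbit from $\zeta_{0,K}$, inducting over the zeros $z_1>z_2>\cdots$ of $V_{0,K}$: at each step it shows that on the arc between two consecutive zeros the orbit lies in the invariant region $\mathcal{B}_{0,K}^+$ (Proposition~\ref{prop.0}(b)), invokes the separatrix property~\eqref{y005} of $\mathcal{C}_{0,K}$ to keep $U_{0,K}<U_0(K)$, and treats separately the case $z_2=\zeta_{0,K}^-$ where compactness of $\overline{\mathcal{B}_{0,K}^+}$ forces $\zeta_{0,K}^-=-\infty$ with convergence to $Q_2$. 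You instead assume a single exceedance $U_{0,K}(\zeta^\star)>U_0(K)$ exists, pin down the first subsequent crossing $\bar\zeta$ of the level $U=U_0(K)$ (ruling out $V_{0,K}(\bar\zeta)=0$ via $U_0(K)>1$), follow the orbit to the next zero $\zeta_1$ of $V_{0,K}$ where $U_{0,K}(\zeta_1)<1$, and then apply the intermediate value theorem to the difference $V_A-V_C$ of the two monotone arcs to produce a self-intersection of the orbit, which contradicts uniqueness for the autonomous system combined with convergence to $Q_0$. Both arguments ultimately exploit the same planar non-crossing principle, but yours is more direct and does not require the iteration or the separate $z_2=\zeta_{0,K}^-$ subcase; the paper's iterative version, on the other hand, yields more granular structural information about the orbit (piecewise monotonicity and confinement between consecutive sign changes of $V$), which is why it is phrased that way. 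One small clean-up worth noting: you write $U_{0,K}(\zeta_1)\le 1$, but $U_{0,K}(\zeta_1)=1$ is excluded since $(1,0)$ is the critical point $Q_2$, which cannot be reached in finite time by a non-stationary orbit; the strict inequality is needed only to conclude $U_{0,K}(\zeta_1)<U_0(K)$, which follows anyway from $U_0(K)>1$, so there is no real gap.
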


\begin{proof}
Let $K>0$ be such that $U_0(K)\in(1,\infty)$. First, recalling~\eqref{p070}, $U_{0,K}$ is decreasing on $(\zeta_{0,K},\infty)$, whence $U_{0,K}(\zeta)<U_0(K)$ for $\zeta\in (\zeta_{0,K},\infty)$. It remains to show the same result for $\zeta\in(\zeta_{0,K}^-,\zeta_{0,K})$. According to Corollary~\ref{cor00}~(a), $\zeta_{0,K}>\zeta_{0,K}^-$ and there is $\delta>0$ such that $V_{0,K}(\zeta)>0$ for $\zeta\in (\zeta_{0,K}-\delta,\zeta_{0,K})$. This property, along with the first equation of~\eqref{p030}, implies that $\frac{dU_{0,K}}{d\zeta}(\zeta)>0$ and thus $U_{0,K}(\zeta)<U_0(K)$ in the same left neighborhood $(\zeta_{0,K}-\delta,\zeta_{0,K})$. Tracking back the orbit $(U_{0,K},V_{0,K})$, we have two possibilities:

$\bullet$ either $V_{0,K}(\zeta)>0$ for any $\zeta\in(\zeta_{0,K}^-,\zeta_{0,K})$, so that $\frac{dU_{0,K}}{d\zeta}>0$ on $(\zeta_{0,K}^-,\zeta_{0,K})$ and thus $U_{0,K}(\zeta)<U_0(K)$ for any $\zeta\in(\zeta_{0,K}^-,\zeta_{0,K})$, completing the proof.

$\bullet$ or there is a last zero of $V_{0,K}$ prior to $\zeta_{0,K}$; that is,
\begin{equation*}
	z_1 := \sup\{\zeta\in(\zeta_{0,K}^-,\zeta_{0,K}): V_{0,K}(\zeta)=0\} \in \big(\zeta_{0,K}^-,\zeta_{0,K}\big).
\end{equation*}
Then $V_{0,K}(\zeta)>0$ and thus $U_{0,K}(\zeta)<U_0(K)$ for $\zeta\in(z_1,\zeta_{0,K})$. Moreover, $V_{0,K}(z_1)=0$ with $\frac{dV_{0,K}}{d\zeta}(z_1)\ge 0$ and the finiteness of $z_1$ prevents $(U_{0,K}(z_1),0)$ from being a critical point, whence $U_{0,K}(z_1)\not\in\{0,1\}$. Since
\begin{equation*}
	U_{0,K}(z_1) \left( 1 - U_{0,K}(z_1)^{2\nu-1} \right) = \frac{dV_{0,K}}{d\zeta}(z_1)\ge 0,
\end{equation*}
we conclude that $U_{0,K}(z_1)\in (0,1)$ and $\frac{dV_{0,K}}{d\zeta}(z_1)<0$. Consequently, there is $\delta_1>0$ such that $U_{0,K}\in (0,1)$ and $V_{0,K}<0$ on $(z_1-\delta_1,z_1)$. In particular, $(U_{0,K}(\zeta),V_{0,K}(\zeta))\in \mathcal{B}_{0,K}^+$ for $\zeta\in (z_1-\delta_1,z_1)$ and we deduce from Proposition~\ref{prop.0}~(b) that
\begin{equation*}
	(U_{0,K}(\zeta),V_{0,K}(\zeta))\in \mathcal{B}_{0,K}^+, \qquad \zeta\in (z_2,z_1),
\end{equation*}
with
\begin{equation*}
	z_2 := \sup\{\zeta\in(\zeta_{0,K}^-,z_1): V_{0,K}(\zeta)=0\}.
\end{equation*}
Since the curve
\begin{equation}
	\mathcal{C}_{0,K} := \left\{(u,v)\in (0,U_0(K))\times \real,\:\ v=b_{0,K}(u)\right\} \;\text{ is a separatrix for }\; \Psi_{K}, \label{y005}
\end{equation}
we conclude that $b_{0,K}(U_{0,K}(\zeta))< V_{0,K}(\zeta)<0$ for $\zeta\in (z_2,z_1)$ and thus $U_{0,K}(\zeta)<U_0(K)$ for $\zeta\in (z_2,z_1)$. Now,

\hspace{1cm} $\bullet$ either  $z_2=\zeta_{0,K}^-$ and the relative compactness of $\mathcal{B}_{0,K}^+$ entails that $\zeta_{0,K}^-=-\infty$, as well as the relative compactness of $(U_{0,K},V_{0,K})((-\infty,z_1))$ and the monotonicity of $U_{0,K}$ on $(-\infty,z_1)$. We then argue as at the end of the proof of \textbf{Step~2} in Proposition~\ref{prop.Ksmall} to conclude that $(U_{0,K},V_{0,K})(\zeta)$ converges to $Q_2$ as $\zeta\to -\infty$ with $U_{0,K}(z_1)<U_{0,K}(\zeta)<1<U_0(K)$ for $z\in (-\infty,z_1)$.

\hspace{1cm} $\bullet$ Or $z_2>\zeta_{0,K}^-$ and we argue as above to obtain that
\begin{equation*}
	U_{0,K}(z_2)\in (1,U_0(K)), \quad V_{0,K}(z_2)=0, \quad \frac{dV_{0,K}}{d\zeta}(z_2)<0.
\end{equation*}
At this point, either $V_{0,K}>0$ on $(\zeta_{0,K}^-,z_2)$ or
\begin{equation*}
	z_3 := \sup\{\zeta\in(\zeta_{0,K}^-,z_2): V_{0,K}(\zeta)=0\} \in (\zeta_{0,K}^-,z_2).
\end{equation*}
We then proceed as above to deduce that $U_{0,K}<U_0(K)$ on $(\zeta_{0,K}^-,z_2)$ in the first case and on $(z_3,z_2)$ in the second case. Iterating this argument completes the proof.
\end{proof}

The next result identifies the range of the mapping $K\mapsto U_0(K)$.

\begin{lemma}\label{lem.U0}
There exist $0<K_0\leq K_{\infty}<\infty$ such that $U_0(K)=1$ for $K\in(0,K_0)$ and $U_0(K)=\infty$ for $K>K_{\infty}$. In addition, $K_0\le K_u$.
\end{lemma}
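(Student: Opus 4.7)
My plan is to set
\[
K_0 := \sup\{K>0 : U_0(K)=1\}, \qquad K_\infty := \inf\{K>0 : U_0(K)=\infty\},
\]
and to prove $K_\star\leq K_0\leq K_u$ and $K_\infty<\infty$. Monotonicity of $K\mapsto U_0(K)$ supplied by Corollary~\ref{cor0}, combined with the lower bound $U_0(K)\geq 1$ coming from Lemma~\ref{lem.U0K} and Corollary~\ref{cor00}, automatically yields $U_0\equiv 1$ on $(0,K_0)$ and $U_0\equiv\infty$ on $(K_\infty,\infty)$, and $K_0\leq K_\infty$ follows from the definitions. The lower bound $K_0\geq K_\star$ is an immediate consequence of the first statement of Proposition~\ref{prop.Ksmall}, while the upper bound $K_0\leq K_u$ is delivered by Corollary~\ref{cor00}(b), which forces $K\in(0,K_u]$ whenever $U_0(K)=1$.

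The substantial point is the finiteness of $K_\infty$. I propose to construct, for $K$ sufficiently large, a barrier curve $\Gamma_\alpha := \{(U,V) : U>0,\,V=-\alpha U^\nu\}$ such that the region $\mathcal{B}_\alpha := \{(U,V) : U>0,\,V<-\alpha U^\nu\}$ is backward invariant for the flow $\Psi_K$. The parameter $\alpha>0$ is chosen strictly between the two roots of $\alpha^2 m/(m-1)-C_2\alpha+1=0$, whose discriminant $C_2^2-4m/(m-1)$ is non-negative (a direct algebraic check, strictly positive except in the borderline case $N=2$). With such a choice, $D(\alpha):=C_2\alpha-1-\alpha^2 m/(m-1)>0$ and a short computation gives
\[
\big\langle \nabla(V+\alpha U^\nu),\mathbf{S}_K\big\rangle\big|_{V=-\alpha U^\nu} = U - C_1(K)\alpha U^\nu + D(\alpha)\,U^{2\nu}.
\]
Since $C_1(K)\to 0$ as $K\to\infty$ by~\eqref{p031}, a splitting of the range of $U$ into small, intermediate, and large values (combined with Young's inequality applied to the middle term) produces an explicit threshold $\bar K$ beyond which the right-hand side is nonnegative for every $U>0$, which is exactly the backward invariance of $\mathcal{B}_\alpha$.

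To conclude, I observe that the limit $V_{0,K}/U_{0,K}\to -1/C_1(K)$ from~\eqref{p050}, together with $\nu>1$ and $U_{0,K}(\zeta)\to 0$ as $\zeta\to\infty$, forces $V_{0,K}/U_{0,K}^\nu\to-\infty$, so that $l_0(K)$ enters $\mathcal{B}_\alpha$ for $\zeta$ large enough and, by backward invariance, lies in it on all of $(\zeta_{0,K}^-,\infty)$. This precludes $\zeta_{0,K}>-\infty$, so Lemma~\ref{lem.U0K}(b)--(c) restricts $U_0(K)$ to $\{1,\infty\}$; for $K>\max\{K_u,\bar K\}$ Corollary~\ref{cor00}(b) rules out the value $1$, leaving $U_0(K)=\infty$ and thus $K_\infty\leq\max\{K_u,\bar K\}<\infty$. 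I expect the main obstacle to be the degenerate dimension $N=2$, where $D(\alpha)\leq 0$ for every $\alpha$ and $\Gamma_\alpha$ is at best tangent to the escaping orbits at infinity; that case will likely require either a lower-order correction of $\Gamma_\alpha$ (e.g.\ a mixed-power or logarithmically perturbed curve) or an independent asymptotic analysis of $l_0(K)$ near $U=\infty$.
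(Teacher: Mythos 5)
Your identification of $K_0$ (via Proposition~\ref{prop.Ksmall}, Corollary~\ref{cor0}, Lemma~\ref{lem.U0K}, and Corollary~\ref{cor00}(b)) matches the paper's Step~1 and is correct. For the finiteness of $K_\infty$, however, you take a genuinely different route, and it has a gap that you have in fact spotted yourself.

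The paper's Step~2 does not construct a barrier at all: it imports the compactly supported, decreasing profile $f^*$ furnished by~\cite{IL24} for the critical exponent pair $(\alpha^*,\beta^*)$, transplants it through the change of variables~\eqref{change.large}--\eqref{PSvar.large} into the $(Y,Z)$- and then $(U,V)$-plane, and uses the behavior~\eqref{y002} at the interface $\xi_0^*$ together with the vertical asymptote at $\xi=0$ to identify its image with $l_0(K^*)$ (via the uniqueness in Lemma~\ref{lem.P0}) and read off $U_0(K^*)=\infty$. This delivers one explicit value of $K$ with $U_0=\infty$, hence $K_\infty\le K^*<\infty$, by monotonicity (Corollary~\ref{cor0}). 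The argument works in every space dimension at the cost of invoking an exogenous existence result.

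Your barrier $\Gamma_\alpha=\{V=-\alpha U^\nu\}$ is internally consistent where it applies: the discriminant of $\alpha^2 m/(m-1)-C_2\alpha+1$ is
\[
C_2^2-\frac{4m}{m-1}=\frac{(m-1)(N-2)^2}{2[N(m-1)+2]},
\]
so $D(\alpha)>0$ is achievable precisely when $N\ne 2$; the Young splitting then yields strict positivity of $\langle\nabla(V+\alpha U^\nu),\mathbf S_K\rangle$ along $\Gamma_\alpha$ for $K$ large (take the Young constant strictly, not at equality), and the asymptotics~\eqref{p050} with $\nu>1$ indeed push $l_0(K)$ into $\mathcal B_\alpha$ for $\zeta$ large. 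Backward invariance then gives $\zeta_{0,K}=-\infty$, and Lemma~\ref{lem.U0K}(b)--(c) plus Corollary~\ref{cor00}(b) finish the case $K>\max\{K_u,\bar K\}$. But for $N=2$ the discriminant vanishes, $D(\alpha)\le 0$ for every $\alpha$, the leading $U^{2\nu}$ term on the barrier cannot dominate, and the purely power-law curve fails for $U$ large no matter how small $C_1(K)$ is. This is a genuine gap in your argument as stated: the lemma is asserted for all $N\ge 1$, and your construction, without the promised lower-order correction of $\Gamma_\alpha$ or a separate study of $l_0(K)$ near $U=\infty$, does not cover $N=2$. The paper's reliance on~\cite{IL24} is precisely what lets it sidestep this dimension-dependent degeneracy.
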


\begin{proof} \textbf{Step 1}. Set
\begin{equation*}
	K_0:=\sup\{K>0: U_0(K)=1\}.
\end{equation*}
Proposition~\ref{prop.Ksmall} guarantees that $K_0\ge K_\star>0$, while Corollary~\ref{cor0} and the positivity of $K_\star$ imply that $U_0(K)=1$ for $K\in (0,K_0)$. Moreover, it follows from Corollary~\ref{cor00}~(b) that $K_0\le K_u$.

\medskip

\textbf{Step~2.} Introduce
$$
K_{\infty}:=\sup\{K>0:U_0(K)<\infty\}\in (0,\infty].
$$
On the one hand, $K_{\infty}\geq K_0>0$. On the other hand, it follows from \cite{IL24} that there is a unique pair of exponents  $(\alpha^*,\beta^*)\in (0,\infty)^2$, $\alpha^*=2\beta^*/(m-1)$ such that~\eqref{ODE} has a compactly supported solution $f^*$ with support $[0,\xi_0^*]$ which is decreasing on $[0,\xi_0^*]$ and satisfies
\begin{equation*}
	f^*(0)=1, \quad (f^*)'(0)=0, \quad f^*(\xi_0^*) = 0, \quad [(f^*)^m]'(\xi_0^*)=0.
\end{equation*}
Moreover, by \cite[Proposition~2.8~(c)]{IL24},
\begin{equation}
\begin{split}
	f^*(\xi) & \sim \left( \frac{1-q}{\beta^*} \right)^{1/(1-q)} (\xi_0^*)^{(\sigma-1)/(1-q)} \big( \xi_0^* - \xi \big)^{1/(1-q)}, \\
	[(f^*)^m]'(\xi) & \sim - \frac{m}{1-q}  \left( \frac{1-q}{\beta^*} \right)^{m/(1-q)} (\xi_0^*)^{m(\sigma-1)/(1-q)} \big( \xi_0^* - \xi \big)^{(m+q-1)/(1-q)},
\end{split}
\label{y002}
\end{equation}
as $\xi\nearrow\xi_0^*$. As in Section~\ref{subsec.aaf}, we define
\begin{equation*}
	\begin{split}
		\mathcal{X}_{f^*}(\xi) & := \frac{2m}{\alpha^*} \xi^{-2} (f^*)^{m-1}(\xi), \ \mathcal{Y}_{f^*}(\xi) := \frac{2m}{\alpha^*}\xi^{-1} (f^*)^{m-2}(\xi) (f^*)'(\xi),  \\
		\mathcal{Z}_{f^*}(\xi) & := \mathcal{X}_{f^*}(\xi)^{1/\nu}=\left[\frac{2m}{\alpha^*}\right]^{1/\nu} \xi^{-2/\nu} (f^*)^{\mu}(\xi),
	\end{split}
	\end{equation*}
for $\xi\in (0,\xi_0^*)$ and
\begin{equation*}
	\eta_{f^*}(\xi) := \frac{\alpha^*}{2m} \int_0^\xi \frac{s}{(f^*)^{m-1}(s)}\,ds, \qquad \xi\in (0,\xi_0^*).
\end{equation*}
Since $m+q>2$, $\lim_{\xi\to\xi_0^*} \eta_{f^*}(\xi)=\infty$ and the functions $\big(Y^*,Z^*\big) := \big(\mathcal{Y}_{f^*}\circ \eta_{f^*}, \mathcal{Z}_{f^*}\circ \eta_{f^*}\big)$ solve~\eqref{p018a} on $(0,\infty)$ with
\begin{equation*}
	K^*:= \frac{1}{m} \left[ \frac{2m}{\alpha^*} \right]^{(m-q)/(m-1)}.
\end{equation*}
Moreover, the properties of $f^*$ and~\eqref{y002} guarantee that
\begin{equation}
	\lim_{\eta\to 0} \big(Y^*,Z^*\big)(\eta) = \left( - \frac{2}{N} , \infty \right), \quad \lim_{\eta\to\infty} \big(Y^*,Z^*\big) = (0,0), \label{y003}
\end{equation}
\begin{equation}
	Y^* < 0 \;\;\text{ and }\;\; Z^*> 0 \;\;\text{ on }\;\; (0,\infty). \label{y004}
\end{equation}
Owing to~\eqref{y003}, the local behavior of~\eqref{p018a} described in Lemma~\ref{lem.P0} implies that 
$$
\big(Y^*,Z^*\big)((0,\infty))\subset W_c(P_0),
$$ 
whence $\big(Y^*,Z^*\big)((0,\infty))=l_0(K^*) $ by Lemma~\ref{lem.P0}. Introducing
\begin{equation*}
	U^*(\zeta) := \frac{1}{Z_{K^*}} Z^*\left( \frac{\zeta}{\lambda_{K^*}} \right), \quad V^*(\zeta) := \frac{1}{\lambda_{K^*}} \left[(m-1) Y^*\left(\frac{\zeta}{\lambda_K}\right) - 2 (Z^*)^{\nu}\left(\frac{\zeta}{\lambda_K}\right) \right]
\end{equation*}
for $\zeta\in (0,\infty)$ as in~\eqref{p029}, which is nothing but another parametrization of $l_0(K^*)$, we readily deduce from~\eqref{y004} that $\big(U^*,V^*\big)(\zeta)\in (0,\infty)\times (-\infty,0)$ for all $\zeta\in (0,\infty)$. Therefore, $l_0(K^*)\subset (0,\infty)\times (-\infty,0)$ and, recalling~\eqref{p049}, we have shown that $\zeta_{0,K^*}^-=-\infty$. We next deduce from~\eqref{y003} and the definition of $\big(U^*,V^*\big)$ that
\begin{equation*}
	\lim_{\zeta\to -\infty} \big( U_{0,K^*} , V_{0,K^*} \big)(\zeta) = (\infty,-\infty).
\end{equation*}
Hence, $U_0(K^*)=\infty$ and therefore $K_{\infty}<K^*$. Moreover, it follows from Corollary~\ref{cor0} that, if $U_0(K_1)=\infty$ for some $K_1>0$, then we have $U_0(K)=\infty$ for any $K>K_1$. Therefore, $U_0(K)=\infty$ for any $K>K_\infty$.
\end{proof}

More precise information on $U_0(K_0)$ and $U_0(K_{\infty})$ will be given in Section~\ref{subsec.cd} after proving the continuous dependence.

\subsection{Strict monotonicity of $U_0$ and $U_1$}\label{subsec.monot}

The invariant regions constructed in the previous section allow us to conclude that, for any $K_2>K_1>0$, we have
\begin{equation}
	b_{1,K_2}(u)<b_{1,K_1}(u), \quad 0\leq u<U_1(K_2)\le U_1(K_1) \label{y006}
\end{equation}
and
\begin{equation}
	b_{0,K_2}(u)<b_{0,K_1}(u)<0, \quad 0\leq u<U_0(K_1)\le U_0(K_2), \label{y007}
\end{equation}
see Corollary~\ref{cor1} and Corollary~\ref{cor0}, respectively. In particular, we have only shown up to now that $U_1(K_2)\leq U_1(K_1)$ and $U_0(K_1)\leq U_0(K_2)$. However, in order to proceed further, we need strict inequalities. This is the goal of the next result.

\begin{proposition}\label{prop.monot}
Let $0<K_1<K_2$.
\begin{itemize}
	\item [(a)] If $U_1(K_2)>1$, then $U_1(K_1)>U_1(K_2)$.
	\item [(b)] If $U_0(K_1)\in(1,\infty)$, then $U_0(K_1)<U_0(K_2)$.
\end{itemize}
\end{proposition}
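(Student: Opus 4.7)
We upgrade the weak monotonicity of Corollary~\ref{cor1} and Corollary~\ref{cor0} to strict inequalities by a contradiction argument based on a refined two-term asymptotic expansion of the orbits as they approach the common turning point $(U^*,0)$ on the $u$-axis.

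\textbf{Part (a).} Suppose, for contradiction, that $U_1(K_1)=U_1(K_2)=U^*$; the hypothesis $U_1(K_2)>1$, combined with Corollary~\ref{cor1}, forces $U^*>1$. On $(0,U^*)$, both orbits $l_1(K_j)$, $j\in\{1,2\}$, can be parametrized as graphs $v=b_{1,K_j}(u)>0$, and the squared functions $\phi_j:=b_{1,K_j}^2$ satisfy the scalar first-order ODE
\[
u\phi_j'(u)=2\nu\Bigl[C_1(K_j)\sqrt{\phi_j}-\frac{\phi_j}{m-1}+u-u^{2\nu}-C_2u^\nu\sqrt{\phi_j}\Bigr],\qquad \phi_j(U^*)=0.
\]
Since $(U^*,0)$ is a regular point of the smooth vector field $\mathbf{S}_{K_j}$ at which $S_{1,K_j}(U^*,0)=0$ and $S_{2,K_j}(U^*,0)=U^*-(U^*)^{2\nu}<0$, the flow $\Psi_{K_j}$ is smooth near $\zeta_{1,K_j}$. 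Computing $\ddot U,\dddot U,\dot V,\ddot V$ at $\zeta_{1,K_j}$ directly from~\eqref{p030} and inverting the relation $\epsilon:=U^*-U(\zeta)=\tfrac{A(U^*)^2}{4\nu^2}(\zeta-\zeta_{1,K_j})^2+O(|\zeta-\zeta_{1,K_j}|^3)$ to the required order yields
\[
\phi_j(u)=A(U^*-u)-\frac{4\nu\sqrt{A}}{3U^*}\bigl(C_1(K_j)-C_2(U^*)^\nu\bigr)(U^*-u)^{3/2}+o\bigl((U^*-u)^{3/2}\bigr)
\]
as $u\nearrow U^*$, with $A:=2\nu((U^*)^{2\nu}-U^*)/U^*>0$ independent of $j$. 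Since $C_1$ is strictly decreasing by~\eqref{p031}, the coefficient of $(U^*-u)^{3/2}$ is strictly greater for $j=2$, hence $\phi_1(u)<\phi_2(u)$, equivalently $b_{1,K_1}(u)<b_{1,K_2}(u)$, for $u$ close enough to $U^*$. This contradicts~\eqref{y006}.

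\textbf{Part (b).} The argument is entirely analogous. Assume $U_0(K_1)=U_0(K_2)=U^*\in(1,\infty)$ and parametrize each orbit $l_0(K_j)$ on $(0,U^*)$ as $v=b_{0,K_j}(u)<0$. Substituting $V=-\sqrt{\phi_j}$ with $\phi_j:=b_{0,K_j}^2$ flips the signs of the $\sqrt{\phi_j}$-terms in the ODE, and the very same Taylor-expansion computation gives
\[
\phi_j(u)=A(U^*-u)+\frac{4\nu\sqrt{A}}{3U^*}\bigl(C_1(K_j)-C_2(U^*)^\nu\bigr)(U^*-u)^{3/2}+o\bigl((U^*-u)^{3/2}\bigr).
\]
Now the coefficient of $(U^*-u)^{3/2}$ is strictly greater for $j=1$, so $\phi_1(u)>\phi_2(u)$ near $U^*$; since both $b_{0,K_j}$ are negative, this means $b_{0,K_1}(u)<b_{0,K_2}(u)$, contradicting~\eqref{y007}.

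\textbf{Main obstacle.} The delicate step is the rigorous derivation of the two-term asymptotic expansion, since the scalar ODE for $\phi_j$ has a non-Lipschitz right-hand side at the endpoint $\phi_j=0$ (due to the $\sqrt{\phi_j}$ factors). This is circumvented by working in the smooth time variable $\zeta$, in which the Taylor expansion of $\Psi_{K_j}$ at $\zeta_{i,K_j}$ is standard. The crucial algebraic fact is that the $O((U^*-u)^{3/2})$ correction is governed by $\ddot V(\zeta_{i,K_j})=(C_1(K_j)-C_2(U^*)^\nu)(U^*-(U^*)^{2\nu})$ and $\dddot U(\zeta_{i,K_j})=U^*\ddot V(\zeta_{i,K_j})/\nu$, both depending \emph{linearly} on $C_1(K_j)$. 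Combined with the strict monotonicity of $C_1$ in $K$, this linearity produces the sign that contradicts the weak inequality already recorded in Corollary~\ref{cor1} or Corollary~\ref{cor0}.
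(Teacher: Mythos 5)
Your proof is correct, but it takes a genuinely different route from the paper's. Where you argue by contradiction --- assuming $U_1(K_1)=U_1(K_2)=U^*>1$ and extracting a two-term Puiseux expansion of $\phi_j=b_{1,K_j}^2$ near the turning point $(U^*,0)$ whose $O((U^*-u)^{3/2})$-coefficient is a strictly decreasing affine function of $C_1(K_j)$ --- the paper works directly with the difference $B=b_{1,K_2}-b_{1,K_1}$, observes from~\eqref{p056} that it solves a first-order linear ODE with source $-\nu(C_1(K_1)-C_1(K_2))/u$, bounds the coefficient $\omega_1$ by $\nu/((m-1)u)$ on $[1,U_1(K_2))$ by exploiting $1-u^{2\nu-1}\le 0$ there, and integrates the resulting differential inequality, using the weak ordering~\eqref{y006} to sign the $\omega_1 B$ term. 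The paper's argument is shorter and needs only the $C^1$ structure of the system plus an elementary Gr\"onwall-type step; yours requires justifying a third-order Taylor expansion of the flow at $\zeta_{1,K_j}$ and inverting $U^*-U(\zeta)$ to order $(\zeta-\zeta_{1,K_j})^3$. Both are rigorous --- the expansion lives where $u$ is near $U^*>1$, so the $u_+^{\nu}$ and $u_+^{2\nu}$ terms are smooth and the inversion is legitimate --- and your identification of $\ddot V(\zeta_{1,K_j})=(C_1(K_j)-C_2(U^*)^\nu)\dot V(\zeta_{1,K_j})$, $\dddot U=U^*\ddot V/\nu$, and the sign flip between the dead-core side and the interface side are all correct. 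The paper's approach is arguably more robust in that it gives strictness on the entire interval $[1,U^*]$ rather than only locally near the endpoint, but both close the gap in the same monotonicity conclusion.
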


\begin{proof}
We recall that, for a general $K>0$, the function $b_{1,K}=V_{1,K}\circ U_{1,K}^{-1}$ is defined on $[0,U_1(K)]$ and the inverse function theorem gives
\begin{equation}\label{p056}
b_{1,K}'(u)=\frac{\nu C_1(K)}{u}-\frac{\nu b_{1,K}(u)}{(m-1)u}+\frac{\nu(1-u^{2\nu-1})}{b_{1,K}(u)}-\nu C_2 u^{\nu-1}, \quad u\in \big(0,U_1(K)\big).
\end{equation}
Consequently, for $K_2>K_1>0$, the function $B_1:=b_{1,K_2}-b_{1,K_1}$ is well defined on $[0,U_1(K_2)]$ and we easily infer from~\eqref{p056} that
\begin{equation}\label{p057}
B_1'(u)+\omega_1(u) B_1(u)=-\frac{\delta}{u}, \quad 0<u<U_1(K_2),
\end{equation}
where
\begin{equation*}
\omega_1(u) := \nu \left[\frac{1}{(m-1)u} + \frac{1-u^{2\nu-1}}{b_{1,K_1}(u)b_{1,K_2}(u)}\right], \quad \delta:=\nu(C_1(K_1)-C_1(K_2))>0.
\end{equation*}
Since $1<U_1(K_2)\leq U_1(K_1)$, we find that $\omega_1(u)\leq\nu/(m-1)u$ for any $u\in[1,U_1(K_2))$ and we infer from~\eqref{p057} and the non-positivity~\eqref{y006} of $B_1$ on $\big(0,U_1(K_2)\big)$ that
$$
B_1'(u)+\frac{\nu}{(m-1)u}B_1(u)\leq-\frac{\delta}{u}, \quad 1\leq u<U_1(K_2),
$$
or equivalently
$$
\frac{d}{du}(B_1(u)u^{\nu/(m-1)})=u^{\nu/(m-1)}\left[B_1'(u)+\frac{\nu}{(m-1)u}B_1(u)\right]\leq-\delta u^{\nu/(m-1)-1}.
$$
Integrating on $[1,u]$ with $1<u<U_1(K_2)$, we further obtain
\begin{equation}
B_1(u)u^{\nu/(m-1)} \leq B_1(1) - \frac{\delta(m-1)}{\nu}\left(u^{\nu/(m-1)}-1\right) < B_1(1) < 0 \label{p058}
\end{equation}
for $u\in[1,U_1(K_2))$. Letting $u\to U_1(K_2)$ in the estimate~\eqref{p058}, we conclude that $B_1(U_1(K_2))<0$; that is,
\begin{equation*}
	b_{1,K_1}(U_1(K_1)) = 0 = b_{1,K_2}(U_1(K_2))<b_{1,K_1}(U_1(K_2)),
\end{equation*}
and thus $U_1(K_1)>U_1(K_2)$, as claimed.

We prove now the analogous strict monotonicity result for $K\mapsto U_0(K)$. Letting, as above, $B_0 := b_{0,K_2}-b_{0,K_1}$, which is well defined for $u\in[0,U_0(K_1)]$, we obtain in a similar way the ordinary differential equation~\eqref{p057} with $B_1$ replaced by $B_0$ and $\omega_1$ replaced by
\begin{equation}
	\omega_0(u) := \nu \left[\frac{1}{(m-1)u} + \frac{1-u^{2\nu-1}}{b_{0,K_1}(u)b_{0,K_2}(u)}\right], \qquad u\in \big(0,U_0(K_1)\big). \label{p059.5}
\end{equation}
Since $U_0(K_1)\in(1,\infty)$, we also deduce that $\omega_0(u)\leq\nu/(m-1)u$ for any $u\in[1,U_0(K_1))$. Owing to the non-positivity~\eqref{y007} of $B_0$, we may argue exactly as in the previous proof to establish that $B_0(U_0(K_1))<0$; that is,
\begin{equation*}
	b_{0,K_2}(U_0(K_1))<b_{0,K_1}(U_0(K_1))= 0= b_{0,K_2}(U_0(K_2)),
\end{equation*}
hence $U_0(K_2)>U_0(K_1)$ and the proof is complete.
\end{proof}

\subsection{Continuous dependence}\label{subsec.cd}

The last piece in the proof of Theorem~\ref{th.1} is the continuity of the mappings $K\mapsto U_0(K)$ and $K\mapsto U_1(K)$. This will be done in two steps: at first, we establish an ``interior continuity" of both; that is, assuming $U_1(K)>1$ and $U_0(K)\in(1,\infty)$. In a second step, we prove continuity only of $U_0$ as $K\to K_0$ and as $K\to K_{\infty}$, in a sense that will be made precise in this section.

\begin{proposition}\label{prop.cd}
Let $K_1>0$.
\begin{itemize}
	\item [(a)] If $U_0(K_1)\in(1,\infty)$, then
	\begin{equation*}
		\lim\limits_{K\to K_1} U_0(K)=U_0(K_1).
	\end{equation*}
	\item [(b)] If $U_1(K_1)>1$, then
	\begin{equation*}
		\lim\limits_{K\to K_1}U_1(K)=U_1(K_1).
	\end{equation*}
	\end{itemize}
\end{proposition}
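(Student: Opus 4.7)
The overall strategy for both parts would be the same: combine continuous dependence on $K$ of the flow $\Psi_K$ on compact time intervals (which is immediate since $\mathbf{S}_K$ is $C^1$ and depends continuously on $K$ via $C_1(K)$) with continuous dependence on $K$ of the relevant invariant manifolds attached to the ``starting'' critical points, namely $Q_{1,K}$ for part~(b) and $Q_0$ for part~(a). The transversality of the axis crossings from Lemma~\ref{lem.U1K}(a) and Corollary~\ref{cor00}(a) would then upgrade continuity of the trajectory to continuity of the crossing coordinate.

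For part~(b), I would fix $K_1>0$ with $U_1(K_1)>1$, so that by Lemma~\ref{lem.U1K}(a) we have $\zeta_{1,K_1}<\infty$ and $l_1(K_1)$ crosses the $u$-axis transversally at $(U_1(K_1),0)$ with $\frac{dV_{1,K_1}}{d\zeta}(\zeta_{1,K_1})<0$. In the $(U,V)$-variables the critical point $Q_{1,K}=(0,(m-1)C_1(K))$ is hyperbolic of saddle type and depends smoothly on $K$, so the parametric unstable manifold theorem would give a continuous family of local unstable manifolds $W_u^{\mathrm{loc}}(Q_{1,K})$. Combined with the single-orbit selection in $(0,\infty)^2$ from Lemma~\ref{lem.P1} (transported to $(U,V)$-coordinates via~\eqref{p029}), this would produce, for each $K$ close to $K_1$, a point $P_K\in l_1(K)$ with $P_K\to P_{K_1}\in l_1(K_1)$ as $K\to K_1$, where $P_{K_1}$ is chosen in a fixed small neighborhood of $Q_{1,K_1}$. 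Running the flow $\Psi_K$ forward from $P_K$ on a finite time interval long enough to reach past $\zeta_{1,K_1}$, continuous dependence of $\Psi_K$ would yield uniform convergence of $\Psi_K(\cdot,P_K)$ to $\Psi_{K_1}(\cdot,P_{K_1})$, and the transversality at the crossing would imply $U_1(K)\to U_1(K_1)$.

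Part~(a) would be treated analogously, with $l_0(K)$ replacing $l_1(K)$, backward time replacing forward time, and the center manifold $W_c(P_0)$ from Lemma~\ref{lem.P0} replacing the unstable manifold. Concretely, I would pick $\zeta_\infty$ large so that $(U_{0,K_1},V_{0,K_1})(\zeta_\infty)$ lies in the domain of the local parametrization~\eqref{p021} of $W_c(P_0)$ and has negative second component (possible thanks to~\eqref{p050}), and then obtain for $K$ close to $K_1$ a point $Q_K\in l_0(K)$ close to this point. Tracing the flow $\Psi_K$ backward from $Q_K$ on a finite interval reaching past $\zeta_{0,K_1}$ (finite by Corollary~\ref{cor00}(a)) would yield $U_0(K)\to U_0(K_1)$ by continuous dependence of $\Psi_K$ and transversality of the crossing.

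The main technical obstacle will be the continuous dependence on $K$ of the local center manifold near $Q_0$, which does not follow directly from the standard center manifold theorem, since $W_c$ is not unique in general. However, the parametrizing function $h_{0,K}$ in~\eqref{p021} arises as a fixed point of a contraction in a suitable function space, so a parameter-dependent fixed-point argument yields continuity of $h_{0,K}$ in $K$ in the $C^1$ norm on a uniform neighborhood of $0$, which is what is required. An alternative and perhaps cleaner route is to work directly with the graph $v=b_{0,K}(u)$ of the orbit $l_0(K)$ in the $(u,v)$-plane: the singular terms in the analog of~\eqref{p056} for $b_{0,K}$ cancel to leading order thanks to the asymptotic slope $-1/C_1(K)$ from~\eqref{p050}, so that $b_{0,K}$ solves a regular initial value problem on a small right neighborhood of $0$, and standard continuous dependence on parameters then transfers to the zero $U_0(K)=\inf\{u>0\,:\,b_{0,K}(u)=0\}$.
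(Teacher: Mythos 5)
Your proposal is correct in outline and identifies the same two key ingredients that the paper uses: (i) continuity in $K$ of the orbit near the relevant ``degenerate'' endpoint critical point (where naive continuous dependence on compact time intervals does not directly apply) and (ii) transversality of the axis crossing, which upgrades orbit convergence to convergence of the crossing coordinate. However, your main technical route for (i) -- a parameter-dependent version of the (un)stable/center manifold theorems, obtained via a fixed-point construction -- is genuinely different from what the paper does. The paper sidesteps invariant-manifold parametric dependence entirely: it differentiates $b_{0,K}'(u)$ using the inverse function theorem, as in the Riccati-type ODE~\eqref{p056}, forms the difference $B_0 = b_{0,K_2} - b_{0,K_1}$, multiplies by ${\rm sign}(B_0)$, and integrates with an explicit integrating factor $u^{\nu/(m-1)}$ to get the uniform Gronwall bound~\eqref{p061} on $[0,1]$. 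This is elementary, self-contained, and reuses the machinery already set up for the strict monotonicity in Proposition~\ref{prop.monot}, whereas your route requires a careful uniform-domain contraction argument plus the Sijbrand-type uniqueness already used in Lemma~\ref{lem.P0} to identify the constructed manifold with the one carrying $l_0(K)$. Your ``alternative route'' (working directly with $b_{0,K}$ and exploiting the cancellation of the $1/u$ singularities near the slope $-1/C_1(K)$) is in spirit closer to the paper's argument, but as you describe it, it is slightly circular: the cancellation relies on $b_{0,K}(u) = -u/C_1(K)+O(u^2)$, which is itself a center-manifold regularity statement rather than an a priori fact, and moreover producing a ``regular IVP'' near $u=0$ only gives continuity of $b_{0,K}$ on a small interval $[0,u_0]$, so one still needs the compact-interval continuous dependence plus a reparametrization/anchoring step (the paper anchors at $U_{0,K}^{-1}(1/2)$) to reach the far-away crossing at $U_0(K)$. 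None of these are fatal objections -- your plan can be carried out -- but the paper's Gronwall-on-$b_{0,K}$ argument is cleaner and avoids invoking fragile parametric regularity results for center manifolds.
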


\begin{proof}
(a) Let $K_1>0$ such that $U_0(K_1)\in(1,\infty)$. Then $\zeta_{0,K_1}>-\infty$ by Corollary~\ref{cor00}~(a) and $V_{0,K_1}(\zeta_{0,K_1})=0>\frac{dV_{0,K_1}}{d\zeta}(\zeta_{0,K_1})$. Letting $K_2>0$ and $B_0=b_{0,K_2}-b_{0,K_1}$, we obtain exactly as in the proof of Proposition~\ref{prop.monot} that
\begin{equation}\label{p060}
B_0'(u) + \omega_0(u) B_0(u) = -\frac{\nu(C_1(K_1)-C_1(K_2))}{u}, \quad u\in[0,\min\{U_0(K_1),U_0(K_2)\}),
\end{equation}
where $\omega_0$ is defined in~\eqref{p059.5}. Therefore, multiplying~\eqref{p060} by $\mathrm{sign}(B_0(u))$ and restricting to $u\in(0,1)$, we find
$$
|B_0|'(u)+\frac{\nu |B_0(u)|}{(m-1)u}\leq|B_0|'(u)+\omega_0(u)|B_0(u)| \leq \frac{\nu}{u}|C_1(K_1)-C_1(K_2)|.
$$
Equivalently,
$$
\frac{d}{du}\left[|B_0|(u)u^{\nu/(m-1)}\right] \leq \nu |C_1(K_1)-C_1(K_2)| u^{\nu/(m-1)-1}, \qquad u\in (0,1).
$$
Since $\nu>0$, by integrating over $[0,u]$ for $u\in (0,1)$ and taking the supremum with respect to $u\in [0,1]$ after dividing by $u^{\nu/(m-1)}$, we are left with
\begin{equation}\label{p061}
\sup\limits_{u\in[0,1]}|(b_{0,K_1}-b_{0,K_2})(u)|\leq(m-1)|C_1(K_1)-C_1(K_2)|.
\end{equation}
In particular, introducing $\varrho_{K_i}:=U_{0,K_i}^{-1}(1/2)$, $i=1,2$, we note that
\begin{equation*}
	\big|V_{0,K_2}(\varrho_{K_2})-V_{0,K_1}(\varrho_{K_1})\big|=\big|(b_{0,K_2}-b_{0,K_1})(1/2)\big|\leq(m-1)|C_1(K_2)-C_1(K_1)|. 
\end{equation*}
We next reparametrize the curve $l_0(K_2)$ with a shift in the independent variable by setting
\begin{equation*}
	(\bar{U}_{0,K_2},\bar{V}_{0,K_2})(\zeta):=(U_{0,K_2},V_{0,K_2})(\zeta+\varrho_{K_2}-\varrho_{K_1}), \quad \zeta>\zeta^{-}_{0,K_2}+\varrho_{K_1}-\varrho_{K_2},
\end{equation*}
which is also a solution to~\eqref{p030}. Observe that
\begin{subequations}\label{y009}
\begin{equation}	
	|\bar{U}_{0,K_2}(\varrho_{K_1})-U_{0,K_1}(\varrho_{K_1})| = |U_{0,K_2}(\varrho_{K_2})-U_{0,K_1}(\varrho_{K_1})| = 0, \label{y009a}
\end{equation}
and
\begin{equation}
	\begin{split}
	|\bar{V}_{0,K_2}(\varrho_{K_1})-V_{0,K_1}(\varrho_{0,K_1})| & = |V_{0,K_2}(\varrho_{K_2})-V_{0,K_1}(\varrho_{K_1})| \\
	& \leq(m-1)|C_1(K_2)-C_1(K_1)|.
	\end{split} \label{y009b}
\end{equation}
\end{subequations}
Now, let $\delta\in(0,1)$ be such that $\zeta^{-}_{0,K_1}<\zeta_{0,K_1}-\delta$. Since $b_{0,K_1}<0$ on $[0,1/2]$, $V_{0,K_1}<0$ on $\big(\zeta_{0,K_1}+\delta,\varrho_{K_1}\big)$ and $V_{0,K_1}(\zeta_{0,K_1}-\delta)>0$, see Corollary~\ref{cor00}~(a), we can pick $\theta\in(0,\delta)$ such that
\begin{subequations}\label{p062}
\begin{equation}\label{p062a}
b_{0,K_1}(u)\leq-2\theta, \quad u\in\left[0,\frac{1}{2}\right],
\end{equation}
\begin{equation}\label{p062b}
V_{0,K_1}(\zeta)\leq-2\theta, \quad \zeta\in[\zeta_{0,K_1}+\delta,\varrho_{K_1}],
\end{equation}
\begin{equation}\label{p062c}
V_{0,K_1}(\zeta_{0,K_1}-\delta)\geq2\theta.
\end{equation}
\end{subequations}
We then infer from~\eqref{y009} and the continuous dependence of the solutions to the system~\eqref{p030} with respect to the initial condition and $K$ that there is $\epsilon_1>0$ such that, for any $K_2\in(K_1-\epsilon_1,K_1+\epsilon_1)$,
we have
\begin{subequations}\label{p063}
\begin{equation}\label{p063b}
b_{0,K_2}(u)\leq-\theta, \quad u\in\left[0,\frac{1}{2}\right],
\end{equation}
\begin{equation}\label{p063c}
\sup\limits_{\zeta\in[\zeta_{0,K_1}-\delta,\varrho_{K_1}]} \big[ |(\bar{U}_{0,K_2} -U_{0,K_1})(\zeta)| + |(\bar{V}_{0,K_2} -V_{0,K_1})(\zeta)|\big]\leq\theta.
\end{equation}
\end{subequations}
Furthermore, we can also pick $\epsilon_0\in(0,\epsilon_1)$ such that
\begin{equation*}
(m-1)|C_1(K_2)-C_1(K_1)|\leq\epsilon_1, \quad K_2\in (K_1-\epsilon_0,K_1+\epsilon_0).
\end{equation*}
Moreover, for $\zeta\in[\varrho_{K_1},\infty)$, we deduce from~\eqref{p063b} that
\begin{equation*}
\begin{split}
\bar{V}_{0,K_2}(\zeta)&=V_{0,K_2}(\zeta-\varrho_{K_1}+\varrho_{K_2})\\
&=b_{0,K_2}\big(U_{0,K_2}(\zeta-\varrho_{K_1}+\varrho_{K_2})\big)\leq-\theta,
\end{split}
\end{equation*}
the latter holding true since $\zeta-\varrho_{K_1}+\varrho_{K_2}\ge\varrho_{K_2}$ and thus the monotonicity of $U_{0,K_2}$ gives $U_{0,K_2}(\zeta-\varrho_{K_1}+\varrho_{K_2})\in[0,1/2]$. Gathering the previous estimate with~\eqref{p062} and~\eqref{p063c}, we are left with
$$
\bar{V}_{0,K_2}(\zeta)\leq-\theta, \quad \zeta\in[\zeta_{0,K_1}+\delta,\infty),
$$
while $\bar{V}_{0,K_2}(\zeta_{0,K_1}-\delta)\geq\theta$. Therefore, the largest zero of $\bar{V}_{0,K_2}$ is finite and lies in the interval $(\zeta_{0,K_1}-\delta,\zeta_{0,K_1}+\delta)$ and thus $\zeta_{0,K_2}>-\infty$ with
$$
\zeta_{0,K_2}-\varrho_{K_1}+\varrho_{K_2}\in(\zeta_{0,K_1}-\delta,\zeta_{0,K_1}+\delta).
$$
We then deduce from \eqref{p063c} that
\begin{equation*}
\begin{split}
|U_0(K_2)-U_0(K_1)|&=|U_{0,K_2}(\zeta_{0,K_2})-U_{0,K_1}(\zeta_{0,K_1})|\\
&=|\bar{U}_{0,K_2}(\zeta_{0,K_2}-\varrho_{K_1}+\varrho_{K_2})-U_{0,K_1}(\zeta_{0,K_1})|\\
&\leq|(\bar{U}_{0,K_2}-U_{0,K_1})(\zeta_{0,K_2}-\varrho_{K_1}+\varrho_{K_2})|\\
&+|U_{0,K_1}(\zeta_{0,K_2}-\varrho_{K_1}+\varrho_{K_2})-U_{0,K_1}(\zeta_{0,K_1})|\\
&\leq\sup\limits_{\zeta\in[\zeta_{0,K_1}-\delta,\varrho_{K_1}]}|(\bar{U}_{0,K_2}-U_{0,K_1})(\zeta)|\\
&+\sup\limits_{\zeta\in[\zeta_{0,K_1}-\delta,\zeta_{0,K_1}+\delta]}|U_{0,K_1}(\zeta)-U_{0,K_1}(\zeta_{0,K_1})| \\
&\leq \delta + \sup\limits_{\zeta\in[\zeta_{0,K_1}-\delta,\zeta_{0,K_1}+\delta]} |U_{0,K_1}(\zeta)-U_{0,K_1}(\zeta_{0,K_1})|.
\end{split}
\end{equation*}
Owing to the continuity of $U_{0,K_1}$ at $\zeta_{0,K_1}$, we may pass to the limit $\delta\to 0$ in the above inequality and thereby obtain the first statement of Proposition~\ref{prop.cd2}.

\medskip

\noindent (b) A completely analogous construction is performed in order to prove the continuity of $U_1$ at any $K>0$ satisfying $U_1(K)>1$.
\end{proof}

The just established continuity properties of $U_0$ allows us to derive additional information on $K_0$ and $K_\infty$.

\begin{corollary}\label{cor.cd}
Let $K_0$ and $K_{\infty}$ be defined in Lemma~\ref{lem.U0}. Then $U_0(K_0)=1$ and, either $K_0=K_\infty$, or $K_0<K_\infty$ with $U_0(K_{\infty})=\infty$.
\end{corollary}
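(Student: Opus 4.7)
The plan is to derive both assertions from a careful reexamination of the continuity estimate
\begin{equation*}
\sup_{u\in[0,1]}|b_{0,K_1}(u)-b_{0,K_2}(u)|\leq(m-1)\,|C_1(K_1)-C_1(K_2)|
\end{equation*}
obtained along the proof of Proposition~\ref{prop.cd}~(a), see~\eqref{p061}. A key preliminary observation is that this bound only uses that $b_{0,K_1}$ and $b_{0,K_2}$ are defined on $[0,1]$ with common value $0$ at $u=0$ and are strictly negative on $(0,1)$, together with the continuity of $K\mapsto C_1(K)$. In view of Corollary~\ref{cor0} and Proposition~\ref{prop.Ksmall}, $U_0(K)\geq 1$ holds for every $K>0$, so these conditions are satisfied for arbitrary $K_1,K_2>0$ (strict negativity on $(0,1)$ coming from Corollary~\ref{cor00}~(b) when $U_0(K)=1$ and from Lemma~\ref{lem.U0K} in the other cases). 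Consequently, the estimate above is valid without the hypothesis $U_0(K_i)\in(1,\infty)$.

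\textbf{Step 1: $U_0(K_0)=1$.} Pick a sequence $K_n\nearrow K_0$. By definition of $K_0$ one has $U_0(K_n)=1$ and hence $b_{0,K_n}(1)=0$ for every $n$. Applying the estimate above to the pair $(K_n,K_0)$ and letting $n\to\infty$ (using the continuity of $C_1$) yields $b_{0,K_0}(1)=0$. Suppose for contradiction $U_0(K_0)>1$. If $U_0(K_0)\in(1,\infty)$, Lemma~\ref{lem.U0K}~(a) guarantees $b_{0,K_0}(u)<0$ for all $u\in(0,U_0(K_0))$; if $U_0(K_0)=\infty$, Lemma~\ref{lem.U0K}~(c) guarantees $b_{0,K_0}(u)<0$ for all $u>0$. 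Either way $b_{0,K_0}(1)<0$, contradicting the identity just obtained. Therefore $U_0(K_0)=1$.

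\textbf{Step 2: if $K_0<K_\infty$ then $U_0(K_\infty)=\infty$.} Assume $K_0<K_\infty$. For any $K\in(K_0,K_\infty)$, Step~1, the definitions of $K_0$ and $K_\infty$, and the monotonicity of $K\mapsto U_0(K)$ (Corollary~\ref{cor0}) together imply $U_0(K)\in(1,\infty)$, whence $U_0(K_\infty)\geq U_0(K)>1$. Suppose for contradiction that $U_0(K_\infty)\in(1,\infty)$. Then Proposition~\ref{prop.cd}~(a) applies at $K=K_\infty$ and provides a full two-sided neighborhood of $K_\infty$ on which $U_0$ is finite-valued and converges to $U_0(K_\infty)$. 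This contradicts $U_0(K)=\infty$ for every $K>K_\infty$, and so $U_0(K_\infty)=\infty$.

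\textbf{Main obstacle.} The only nontrivial ingredient is justifying that the continuity estimate~\eqref{p061} can be invoked at the boundary regimes $U_0(K)=1$ and $U_0(K)=\infty$, which are not directly covered by the formulation of Proposition~\ref{prop.cd}~(a); this is settled by the observation above that its derivation only requires structural properties of $b_{0,K}$ on $[0,1]$ that persist in both regimes. Once this is in place, the whole argument reduces to a limit passage combined with the strict negativity of $b_{0,K_0}(1)$ whenever $U_0(K_0)>1$.
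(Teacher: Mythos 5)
Your proof is correct, and it follows essentially the same route as the paper: the same key estimate (the uniform bound $\sup_{u\in[0,1]}|b_{0,K_1}(u)-b_{0,K_2}(u)|\le(m-1)|C_1(K_1)-C_1(K_2)|$ from \eqref{p061}), the same observation that its derivation persists whenever $U_0(K_i)\ge 1$, and the same continuity argument at $K_\infty$ via Proposition~\ref{prop.cd}~(a). The one genuine difference is in your Step~1: the paper splits the contradiction into two subcases, handling $U_0(K_0)\in(1,\infty)$ by combining monotonicity (Proposition~\ref{prop.monot}~(b)) with continuity (Proposition~\ref{prop.cd}~(a)) and only invoking the direct estimate on $b_{0,K}$ for the subcase $U_0(K_0)=\infty$; you instead run the $b$-estimate uniformly and deduce $b_{0,K_0}(1)=0$, which is simultaneously incompatible with both subcases of $U_0(K_0)>1$. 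This is a cleaner unification and avoids the detour through Proposition~\ref{prop.monot}. One small citation remark: the negativity $b_{0,K_0}<0$ on $(0,U_0(K_0))$ in the finite case is really \eqref{p070} (i.e.\ $V_{0,K}<0$ on $(\zeta_{0,K},\infty)$, a consequence of the definition of $\zeta_{0,K}$ as the last zero of $V_{0,K}$) rather than Lemma~\ref{lem.U0K}~(a), which only records the transversality at $\zeta_{0,K}$; but this is cosmetic and the fact itself is established in the paper.
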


\begin{proof}
	Assume first for contradiction that $U_0(K_0)\in (1,\infty)$. Then Propositions~\ref{prop.monot}~(b) and~\ref{prop.cd}~(a) entail that there is $\epsilon_0\in(0,K_0/2)$ such that $1<U_0(K_0-\epsilon)<U_0(K_0)<\infty$ for $\epsilon\in (0,\epsilon_0)$, which contradicts Lemma~\ref{lem.U0}. Assume next for contradiction that  $U_0(K_0)=\infty$. Then, for $K\in (0,K_0)$, we argue as in the proof of~\eqref{p061} in Proposition~\ref{prop.cd}~(a) to prove that
	\begin{equation*}
		\sup_{u\in [0,1]} |(b_{0,K}-b_{0,K_0})(u)| \le (m-1) |C_1(K)-C_1(K_0)|.
	\end{equation*}
	In particular,
	\begin{equation*}
		b_{0,K_0}(1) = \lim_{K\to K_0} b_{0,K}(1) = \lim_{K\to K_0} b_{0,K}(U_0(K)) = 0,
	\end{equation*}
	so that $V_{0,K_0}(U_{0,K_0}^{-1}(1))=0$, which contradicts Lemma~\ref{lem.U0K}~(c). Therefore, $U_0(K_0)=1$.
	
	We now turn to $K_\infty$ and assume for contradiction that $U_0(K_\infty)\in (1,\infty)$. It then follows from Proposition~\ref{prop.cd}~(a) that there is $\epsilon_\infty>0$ such that $U_0(K)<2 U_0(K_\infty)<\infty$ for $K\in (K_\infty,K_\infty+\epsilon_\infty)$, and a contradiction.
\end{proof}

It remains to discard the possibility $K_0=K_\infty$ and prove the continuity of $K\mapsto U_0(K)$ at $K=K_0$ and $K= K_{\infty}$.

\begin{proposition}\label{prop.cd2}
We have
\begin{equation*}
	\lim\limits_{K\to K_0}U_0(K)=1, \quad \lim\limits_{K\to K_{\infty}}U_0(K)=\infty.
\end{equation*}
\end{proposition}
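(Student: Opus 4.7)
The plan is to prove the two limits separately, both using the $b_{0,K}$-formulation.

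\emph{First limit.} Since $U_0 \equiv 1$ on $(0, K_0]$, only the right limit $\lim_{K\searrow K_0} U_0(K) = 1$ requires proof. For $\varepsilon \in (0,1)$ fixed small, I show $U_0(K) \le 1 + \varepsilon$ once $K$ is close enough to $K_0$. Three ingredients are used. First, the estimate~\eqref{p061}, established within the proof of Proposition~\ref{prop.cd}~(a) but valid whenever both $b_{0,K_1}$ and $b_{0,K_2}$ exist on $[0,1]$, applies with $K_1=K_0$ and yields $b_{0,K}(1) \to b_{0,K_0}(1) = 0$ as $K\to K_0$; here $b_{0,K_0}(1)=0$ follows from Corollary~\ref{cor.cd} (giving $U_0(K_0)=1$) and Corollary~\ref{cor00}~(b). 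Second, the bound $K_0 \le K_u$ from Lemma~\ref{lem.U0}, combined with the characterization $C_1(K_u)-C_2 = 2\sqrt{(m-q)/(m-1)} > 0$ from the proof of Lemma~\ref{lem.Q2}, gives $C_1(K_0) > C_2$. Third, on the interval $[1, 1+\varepsilon]$ where $b_{0,K} < 0$, the ODE~\eqref{p056} has a nonnegative term $\nu(1-u^{2\nu-1})/b_{0,K}$ (since $1-u^{2\nu-1}\le 0$) and a nonnegative term $-\nu b_{0,K}/((m-1)u)$, yielding
\begin{equation*}
	b_{0,K}'(u) \ge \frac{\nu C_1(K)}{1+\varepsilon} - \nu C_2 (1+\varepsilon)^{\nu-1} \ge c_\star > 0
\end{equation*}
for $\varepsilon$ small and $K$ close to $K_0$. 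Integrating over $[1,1+\varepsilon]$ and using $b_{0,K}(1)\to 0$ gives $b_{0,K}(1+\varepsilon) > 0$, contradicting $b_{0,K} \le 0$ on its domain. Hence $U_0(K)\le 1+\varepsilon$. As a byproduct, $U_0(K) < \infty$ for $K$ slightly greater than $K_0$, which rules out the alternative $K_0=K_\infty$ in Corollary~\ref{cor.cd}, so $K_0 < K_\infty$ and $U_0(K_\infty)=\infty$.

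\emph{Second limit.} Since $U_0(K) = \infty$ for $K > K_\infty$, only the left limit $\lim_{K\nearrow K_\infty} U_0(K) = \infty$ needs proof. By monotonicity of $U_0$, $L := \lim_{K\nearrow K_\infty} U_0(K) \in [1,\infty]$ exists; assume for contradiction $L < \infty$. Fix the parametrization of $l_0(K)$ by requiring $U_{0,K}(0) = 1/4$, so that $V_{0,K}(0) = b_{0,K}(1/4)$ is continuous in $K$ by~\eqref{p061}. Since $U_0(K_\infty) = \infty$ and $V_{0,K_\infty} < 0$ on $\mathbb{R}$ by Lemma~\ref{lem.U0K}~(c), there exists $\zeta^* < 0$ with $U_{0,K_\infty}(\zeta^*) = L+1$, and $(U_{0,K_\infty},V_{0,K_\infty})$ is bounded on $[\zeta^*, 0]$. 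Standard continuous dependence of solutions to~\eqref{p030} on parameters and initial conditions then gives, for $K$ close enough to $K_\infty$, a solution $(U_{0,K},V_{0,K})$ defined on $[\zeta^*, 0]$ with $U_{0,K}(\zeta^*) > L$. However Lemma~\ref{lem.amplitude} forces $U_{0,K}(\zeta^*) \le U_0(K) \le L$, a contradiction.

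The main obstacle lies in the first step: the lower bound on $b_{0,K}'$ must hold on $[1, 1+\varepsilon]$ pointwise without assuming $U_0(K) \le 1+\varepsilon$, to avoid a circular argument. This is resolved because the bound depends solely on the signs of $b_{0,K}$ and $1-u^{2\nu-1}$ on that interval, together with the a priori restriction $u \le 1+\varepsilon$, none of which require upper control on $U_0(K)$ itself.
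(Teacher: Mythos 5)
Your proof is correct but follows a genuinely different route from the paper's. For the right-continuity at $K_0$, the paper develops a three-step argument (Steps~1--3) around the blow-down of $V_K$ for the orbit $\Psi_{K_0}(\cdot,U_0,0)$, $U_0>1$: it establishes the differential inequality $\frac{dV_{K_0}}{d\zeta}\le -V_{K_0}^2/2(m-1)$ once $V_{K_0}$ is sufficiently negative, deduces finite-time blow-down, propagates this to a right neighborhood of $K_0$ by continuous dependence, and contradicts it with the separatrix bound $V_K>b_{0,K}(U_K)$. Your argument replaces all of this with a single sign analysis of the $b_{0,K}$-equation on $[1,1+\varepsilon]$: since $K_0\le K_u$ forces $C_1(K_0)>C_2$, the right-hand side of~\eqref{lt012} is bounded below by a positive constant there, and combined with $b_{0,K}(1)\to 0$ (a consequence of~\eqref{p061}, which indeed only requires that both $b$'s be defined on $[0,1]$, always true since $U_0\ge 1$), this contradicts the negativity of $b_{0,K}$. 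This is shorter and more transparent, and has the pleasant side effect of delivering $K_0<K_\infty$ for free, which the paper proves separately in Step~2. For the left-continuity at $K_\infty$, the paper's Step~4 works with integral estimates on $b_{0,K}^2$ via Young's inequality and a comparison argument, while you instead pin a reference section $U_{0,K}(0)=1/4$, invoke continuous dependence of the flow $\Psi_K$, and contradict the amplitude bound of Lemma~\ref{lem.amplitude}; again this is correct and somewhat slicker, leaning more on the geometric separatrix picture and less on explicit $b$-estimates. What the paper's heavier Step~1 machinery buys is reuse: the same blow-down lemma serves both Step~2 and Step~3, whereas your two halves are essentially independent arguments. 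Both approaches are sound.
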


\begin{proof}
We split the proof into several steps, first showing that $K_0<K_\infty$ before moving to the right continuity of $U_0$ at $K_0$ and the left continuity of $U_0$ at $K_\infty$. We begin with the study of some particular orbits of $\Psi_K$ for $K>0$.

\medskip

\noindent \textbf{Step 1.} For $K>0$ and $U_0\in (1,\infty)$, we introduce, for simplicity, the notation
\begin{equation*}	
	(U_{K},V_{K}) := \Psi_{K}(\cdot,U_0,0),
\end{equation*}	
for the orbit of the system~\eqref{p030} passing through the point $(U_0,0)$, defined on the maximal interval $(\zeta_{K}^{-},\zeta_{K}^+)$ with $\zeta_{K}^{-} := \zeta_{K}^{-}(U_0,0)<0$ and $0<\zeta_{K}^{+} := \zeta_{K}^{+}(U_0,0)$. Since
\begin{equation*}	
	\frac{dV_K}{d\zeta}(0) =U_0 - U_0^{2\nu}<0,
\end{equation*}	
we conclude that $V_K$ is negative in a right neighborhood of $\zeta=0$ and we set
\begin{equation*}
	\zeta_K := \inf\big\{ \zeta\in (0,\zeta_K^+)\ :\ V_K(\zeta)=0 \big\} > 0.
\end{equation*}
We particularize now and first focus on the behavior of $(U_{K_0},V_{K_0})$. Assume for contradiction that $\zeta_{K_0}<\zeta_{K_0}^+$. Then, as $(U_{K_0},V_{K_0})$ is not identically equal to either $Q_0$ or $Q_2$, we conclude that
\begin{equation*}
	V_{K_0}(\zeta_{K_0}) = 0, \quad U_{K_0}(\zeta_{K_0}) < 1, \quad \frac{dV_{0,K_0}}{d\zeta}(\zeta_{K_0}) = U_{K_0}(\zeta_{K_0}) - U_{K_0}^{2\nu}(\zeta_{K_0}) > 0.
\end{equation*}
Recalling that $U_{K_0}(0)=U_0>1$, there exists thus $\bar{\zeta}\in (0,\zeta_{K_0})$ such that $U_{K_0}(\bar{\zeta})=1$. Since $V_{K_0}(\bar{\zeta})<0$, the above properties ensure that there is $\delta>0$ such that
\begin{align*}
	& V_{K_0}(\zeta) > b_{0,K_0}(U_{K_0}(\zeta)), \qquad \zeta\in \big( \zeta_{K_0}-\delta, \zeta_{K_0} \big), \\
	& V_{K_0}(\zeta) < b_{0,K_0}(U_{K_0}(\zeta)), \qquad \zeta\in \big( \bar{\zeta}, \bar{\zeta}+\delta \big).
\end{align*}
The continuity of $V_{0,K_0}-b_{0,K_0}(U_{K_0})$ then guarantees that there is $\tilde{\zeta}\in \big( \bar{\zeta}+\delta , \zeta_{K_0}-\delta \big)$ such that $V_{0,K_0}(\tilde{\zeta}) = b_{0,K_0}(U_{K_0}(\tilde{\zeta}))$, which contradicts the separation property~\eqref{y005}  of $\mathcal{C}_{0,K_0}$. Therefore, $\zeta_{K_0} = \zeta_{K_0}^+$ and we have shown that
\begin{equation}
	V_{K_0}(\zeta) < 0, \quad \zeta\in \big( 0 , \zeta_{K_0}^+ \big). \label{lt001}
\end{equation}
Using~\eqref{p030}, we readily infer from~\eqref{lt001} that $U_{K_0}$ is decreasing on $\big( 0 , \zeta_{K_0}^+ \big)$, whence
\begin{equation}
	0< U_{K_0}(\zeta) < U_0, \quad \zeta\in \big( 0 , \zeta_{K_0}^+ \big). \label{lt002}
\end{equation}

We now fix $\overline{V}_0>0$ such that
\begin{equation}
	[C_1(K_0+1) - C_2 U_0^\nu] v + U_0 - \frac{v^2}{2(m-1)} < 0, \qquad v\in \big(-\infty,-\overline{V}_0 \big), \label{lt003}
\end{equation}
and assume for contradiction that $V_{K_0}(\zeta)>-\overline{V}_0$ for any $\zeta\in(0,\zeta_{K_0}^+)$. In this case, both $U_{K_0}$ and $V_{K_0}$ are bounded on $\big(0,\zeta_{K_0}^+\big)$, from which we deduce that $\zeta_{K_0}^+=\infty$. The monotonicity of $U_{K_0}$ ensures additionally that there is $U_\infty\in [0,U_0)$ such that
\begin{equation*}
	\lim_{\zeta\to\infty} U_{K_0}(\zeta)=U_\infty.
\end{equation*}
We then argue as at the end of the proof of \textbf{Step~2} of Proposition~\ref{prop.Ksmall} to conclude that the $\omega$-limit set $\omega_{K_0}(U_0,0)$ of $(U_{K_0},V_{K_0})$ is a critical point of~\eqref{p030} which lies in $[0,U_0)\times (-\infty,0]$; that is, $\omega_{K_0}(U_0,0) = \{Q_0\}$ or $\omega_{K_0}(U_0,0) = \{Q_2\}$. Since $K_0\in (0,K_u]$ by Corollary~\ref{cor.cd}, it follows from Lemma~\ref{lem.Q2} that $Q_2$ is an unstable node and thus the orbit $\Psi_{K_0}(\cdot,U_0,0)$ cannot connect to it. Consequently, $\omega_{K_0}(U_0,0) = \{Q_0\}$ and the uniqueness granted by Lemma~\ref{lem.P0} entails that the orbit $\Psi_{K_0}(\cdot,U_0,0)$ coincides with $l_0(K_0)$. In particular, $(U_0,0)\in l_0(K)$, which contradicts Lemma~\ref{lem.amplitude}. Consequently, there is $\bar{\zeta}_{K_0}\in \big(0,\zeta_{K_0}^+\big)$ such that $V_{K_0}\big(\bar{\zeta}_{K_0}\big)<-\overline{V}_0$. Now, as long as $V_{K_0}(\zeta)<-\overline{V}_{0}$, we infer from~\eqref{p030}, \eqref{lt002}, \eqref{lt003}, and the monotonicity of $K\mapsto C_1(K)$ that
\begin{align*}
	\frac{dV_{K_0}}{d\zeta}(\zeta) & \le C_1(K_0) V_{K_0}(\zeta) - \frac{V_{K_0}^2(\zeta)}{m-1} + U_0 - C_2 U_0^\nu V_{K_0}(\zeta) \\
	& \le C_1(K_0+1) V_{K_0}(\zeta) - \frac{V_{K_0}^2(\zeta)}{m-1} + U_0 - C_2 U_0^\nu V_{K_0}(\zeta) \le - \frac{V_{K_0}^2(\zeta)}{2(m-1)},
\end{align*}
from which we deduce that
\begin{equation}
	V_{K_0}(\zeta) < - \overline{V}_0, \quad \frac{dV_{K_0}}{d\zeta}(\zeta) \le - \frac{V_{K_0}^2(\zeta)}{2(m-1)}, \qquad \zeta\in \big( \bar{\zeta}_{K_0} , \zeta_{K_0}^+\big). \label{lt004}
\end{equation}
Integrating the previous differential inequality leads us to the upper bound
\begin{equation*}
	V_{K_0}(\zeta) \le \frac{2(m-1) V_{K_0}\big( \bar{\zeta}_{K_0} \big)}{ 2(m-1) + V_{K_0}\big( \bar{\zeta}_{K_0} \big) \left( \zeta - \bar{\zeta}_{K_0} \right)} < 0
\end{equation*}
for
\begin{equation*}
	\zeta\in \left( \bar{\zeta}_{K_0} , \bar{\zeta}_{K_0} - \frac{2(m-1)}{V_{K_0}\big( \bar{\zeta}_{K_0} \big)} \right) \cap \big( \bar{\zeta}_{K_0}, \zeta_{K_0}^+ \big).
\end{equation*}
Therefore, $V_{K_0}$ blows down to $-\infty$ at a finite point; that is,
\begin{equation}
	\zeta_{K_0}^+<\infty, \quad \lim\limits_{\zeta\to \zeta_{K_0}^+} V_{K_0}(\zeta) = -\infty. \label{lt005}
\end{equation}

We next show by a continuous dependence argument that the property~\eqref{lt005} extends to a right neighborhood of $K_0$. To this end, we fix $z_0\in \big(\bar{\zeta}_{K_0} , \zeta_{K_0}^+\big)$ such that
\begin{subequations}\label{lt006}
\begin{equation}
	V_{K_0}(\zeta) \le -\overline{V}_0 - 1, \qquad \zeta\in \big[ z_0 , \zeta_{K_0}^+ \big). \label{lt006a}
\end{equation}
Also, since $\frac{dV_{K_0}}{d\zeta}(0)= U_0 - U_0^{2\nu}<0$, there is $\delta_0\in (0,1)$ such that
\begin{equation}
	\frac{dV_{K_0}}{d\zeta}(\zeta) \le \frac{U_0-U_0^{2\nu}}{2}, \qquad \zeta\in [0,\delta_0]. \label{lt006b}
\end{equation}
Finally, the negativity~\eqref{lt001} of $V_{K_0}$ implies that there is $\epsilon_0\in (0,1)$ such that
\begin{equation}
	V_{K_0}(\zeta) \le - 2 \epsilon_0, \qquad \zeta\in [\delta_0,z_0], \label{lt006c}
\end{equation}
and we may assume without loss of generality that $4\epsilon_0 < U_0^{2\nu}-U_0$. The continuous dependence of~\eqref{p030} with respect to the parameter $K$ provides the existence of $\overline{K}_0\in (K_0,K_0+1)$ such that, for each $K\in \big(K_0,\overline{K}_0\big)$, one has $\zeta_K^+>z_0$ and
\begin{align}
	\left| (U_K-U_{K_0})(\zeta) \right| + \left| (V_K-V_{K_0})(\zeta) \right| & \le \epsilon_0, \qquad \zeta\in [0,z_0], \label{lt006d} \\
	\left| \left( \frac{dV_K}{d\zeta} - \frac{dV_{K_0}}{d\zeta} \right)(\zeta) \right| & \le \epsilon_0, \qquad \zeta\in [0,z_0]. \label{lt006e}
\end{align}
\end{subequations}
Now, for $K\in \big(K_0,\overline{K}_0\big)$, we infer from~\eqref{lt006} that
\begin{align*}
	V_K(\zeta) = \int_0^\zeta \frac{dV_K}{d\zeta}(\zeta_*)\, d\zeta_* & \le \int_0^\zeta \left( \frac{U_0-U_0^{2\nu}}{2} + \epsilon_0 \right)\, d\zeta_* \\
	& \le \frac{U_0-U_0^{2\nu}}{4}\zeta < 0, \qquad \zeta\in (0,\delta_0],
\end{align*}
\begin{equation*}
	V_K(\zeta) \le -2\epsilon_0+\epsilon_0 \le -\epsilon_0, \qquad \zeta\in [\delta_0,z_0],
\end{equation*}
and
\begin{equation}
	V_K(z_0) \le - \overline{V}_0  - 1 + \epsilon_0 \le - \overline{V}_0. \label{lt007}
\end{equation}
In particular,
\begin{equation}
	U_K(\zeta)\le U_0, \quad V_K(\zeta)\le 0, \qquad \zeta\in (0,z_0]. \label{lt008}
\end{equation}
Owing to~\eqref{lt007} and \eqref{lt008}, we may argue as in the proof of~\eqref{lt004} with the help of~\eqref{lt003} and the bound $C_1(K)>C_1(K_0+1)$ to conclude that
\begin{equation}
	U_K(\zeta)<U_0, \quad V_K(\zeta) < - \overline{V}_0, \quad \frac{dV_K}{d\zeta}(\zeta) \le - \frac{V_K^2(\zeta)}{2(m-1)}, \qquad \zeta\in \big( z_0 , \zeta_K^+\big), \label{lt009}
\end{equation}
using in addition that the negativity of $V_K$ guarantees the upper bound on $U_K<U_0$. Thanks to~\eqref{lt009}, we proceed as in the proof of~\eqref{lt005} to deduce that, for $K\in \big(K_0,\overline{K}_0\big)$
\begin{equation}
	\zeta_K^+<\infty, \quad \lim\limits_{\zeta\to \zeta_K^+} V_{K}(\zeta) = -\infty. \label{lt010}
\end{equation}

\medskip

\noindent \textbf{Step 2.} Let us now prove that $K_0<K_\infty$ and assume for contradiction that $K_0=K_\infty$, so that $U_0(K_0)=U_0(K_{\infty})=1$ by Corollary~\ref{cor.cd}. Pick $K\in \big(K_0,\overline{K}_0\big)$ and observe that, since $K>K_0=K_\infty$,
\begin{equation*}
	\mathcal{C}_{0,K} = \left\{ v = b_{0,K}(u)\ :\ u\in [0,\infty) \right\} \subset [0,\infty)\times (-\infty,0)
\end{equation*}
is a separatrix for $\Psi_K$ due to~\eqref{y005}, $U_0(K)=\infty$, and Lemma~\ref{lem.U0K}~(c). Therefore, $V_K(\zeta)>b_{0,K}(\zeta)$ in a right neighborhood of $\zeta=0$ and the separation property of $\mathcal{C}_{0,K}$ entails that
\begin{equation}
	V_K(\zeta) > b_{0,K}(\zeta), \qquad \zeta\in \big( 0 , \zeta_K^+ \big). \label{lt011}
\end{equation}
Combining~\eqref{lt008}, \eqref{lt009}, and~\eqref{lt011} entails that
\begin{equation*}
	V_K(\zeta) \ge \min_{[0,U_0]} b_{0,K} > -\infty, \qquad \zeta\in \big( 0 , \zeta_K^+ \big),
\end{equation*}
and contradicts~\eqref{lt010}.

\medskip

We have thus proved that $K_0<K_{\infty}$ and an application of Corollary~\ref{cor.cd} imply that $U_0(K_0)=1$ and $U_0(K_{\infty})=\infty$. We can now move on to the proof of the continuity of $U_0$ as $K\to K_0$ and $K\to K_{\infty}$.

\medskip

\noindent \textbf{Step 3. Continuity as $K\searrow K_0$.} Assume for contradiction that
$$
\overline{U}_0:=\lim\limits_{K\searrow K_0}U_0(K)=\inf\limits_{K>K_0}U_0(K)>1=U_0(K_0).
$$
Pick $U_0\in(1,\overline{U}_0)$ and $K\in \big(K_0,\overline{K}_0\big)$. Then $U_0(K)>\overline{U}_0>U_0>1$ and we infer from Lemma~\ref{lem.U0K}~(a) that $(U_K,V_K)(\zeta)\in \mathcal{B}_{0,K}^+$  in a right neighborhood of $\zeta=0$. Owing to the separation property~\eqref{y005} and the negativity~\eqref{lt008}-\eqref{lt009} of $V_K$, we conclude as above that
\begin{equation*}
	0 > V_K(\zeta) > b_{0,K}(U_K(\zeta)) \ge \min_{[0,U_0]} b_{0,K} > -\infty, \qquad \zeta\in \big( 0 , \zeta_K^+ \big),
\end{equation*}	
which contradicts~\eqref{lt010}. Therefore,
\begin{equation*}
	\lim\limits_{K\searrow K_0} U_0(K) =1,
\end{equation*}
from which the continuity of $U_0$ at $K_0$ follows, recalling that $U_0(K)=1$ for $K\in (0,K_0]$ by Lemma~\ref{lem.U0} and Corollary~\ref{cor.cd}.

\medskip

\noindent \textbf{Step 4.} We are left with proving the continuity as $K\nearrow K_{\infty}$. Assume for contradiction that
\begin{equation*}
	U_0^{\infty}:=\lim\limits_{K\nearrow K_\infty}U_0(K)=\sup\limits_{K<K_{\infty}}U_0(K)<\infty
\end{equation*}
and observe that $U_0^\infty>1$ due to $K_\infty>K_0$. Pick $\epsilon\in (0,(U_0^\infty-1)/2)$. Then there is $\overline{K}_\infty<K_\infty$ such that
\begin{equation*}
	1 < U_0^\infty - 2\epsilon < U_0^\infty - \epsilon < U_0(K) < U_0^\infty, \qquad K\in \big( \overline{K}_\infty,K_\infty \big).
\end{equation*}
Recalling that $b_{0,K}$ solves the ordinary differential equation
\begin{equation}
	b_{0,K}'(u)=\frac{\nu C_1(K)}{u}-\frac{\nu b_{0,K}(u)}{(m-1)u}+\frac{\nu(1-u^{2\nu-1})}{b_{0,K}(u)}-\nu C_2 u^{\nu-1}, \quad u\in \big(0,U_0(K)\big), \label{lt012}
\end{equation}
see~\eqref{p056}, and satisfies
\begin{equation*}
	\big| b_{0,K}(1) - b_{0,K_\infty}(1) \big| \le (m-1) \left( C_1(K) - C_1(K_\infty) \right), 
\end{equation*}
a continuous dependence argument then entails that
\begin{equation}
	\lim\limits_{K\nearrow K_\infty} \sup_{u\in [1,U_0^\infty-\epsilon]} \left| (b_{0,K} - b_{0,K_\infty})(u) \right| = 0. \label{lt014}
\end{equation}
Consider next $K\in \big( \overline{K}_\infty,K_\infty \big)$. It follows from~\eqref{lt012}, the negativity of $b_{0,K}$, and Young's inequality that, for $u\in [1,U_0(K))$,
\begin{align}
	\big( b_{0,K}^2 \big)'(u) & = 2\nu \left( \frac{C_1(K)}{u} - C_2 u^{\nu-1} \right) b_{0,K}(u) - \frac{2\nu}{m-1} \frac{b_{0,K}^2(u)}{u} - 2\nu \left( u^{2\nu-1}-1 \right) \label{lt015}\\
	& \le \frac{\nu}{m-1} \frac{b_{0,K}^2(u)}{u} + \nu(m-1) C_2^2 u^{2\nu-1} - \frac{2\nu}{m-1} \frac{b_{0,K}^2(u)}{u} \nonumber\\
	& \le \frac{\nu}{(m-1)u} \left[ (m-1)^2 C_2^2 U_0(K)^{2\nu} - b_{0,K}^2(u) \right] \nonumber\\
	& \le \frac{\nu}{(m-1)u} \left[ (m-1)^2 C_2^2 \big(U_0^\infty\big)^{2\nu} - b_{0,K}^2(u) \right]. \nonumber
\end{align}
The comparison principle then entails that
\begin{equation}
	b_{0,K}(u) \le \max{\left\{ |b_{0,K}(1)| , (m-1) C_2 \big( U_0^\infty \big)^\nu \right\}} \le \mathcal{M}_0, \quad u\in [1,U_0(K)], \label{lt016}
\end{equation}
with $\mathcal{M}_0 := \max{\left\{ |b_{0,K_\infty}(1)| + (m-1) C_1\big(\overline{K}_\infty\big) , (m-1) C_2 \big( U_0^\infty \big)^\nu \right\}}>0$.
We next infer from~\eqref{lt015}, \eqref{lt016}, and the negativity of $b_{0,K}$ that, for $u\in [1,U_0(K))$,
\begin{equation*}
	\big( b_{0,K}^2 \big)'(u) \ge\frac{2\nu C_1(K)}{u} b_{0,K}(u) - \frac{2\nu}{m-1} \frac{b_{0,K}^2(u)}{u} - 2\nu \left( u^{2\nu-1}-1 \right) \ge - \mathcal{N}_0
\end{equation*}
with
\begin{equation*}
	\mathcal{N}_0 := 2\nu C_1(K_\infty) \mathcal{M}_0 + \frac{2\nu}{m-1} \mathcal{M}_0^2 + 2\nu \left( \big(U_0^\infty\big)^{2\nu-1} - 1 \right) >0.
\end{equation*}
Integrating the above differential inequality over $(u,U_0(K))$ leads us to the estimate
\begin{equation*}
	b_{0,K}^2(u) \le \mathcal{N}_0 \big( U_0(K) - u \big), \qquad u\in [1,U_0(K)].
\end{equation*}
In particular,
\begin{equation*}
	0 \le b_{0,K}^2\big(U_0^\infty - 2\epsilon) \le \mathcal{N}_0 \big( U_0(K) - U_0^\infty + 2 \epsilon \big)
\end{equation*}
for $K\in \big( \overline{K}_\infty,K_\infty \big)$ and~\eqref{lt014} allows us to let $K\to K_\infty$ in the above inequality to conclude that
\begin{equation*}
	0 \le b_{0,K_\infty}^2\big(U_0^\infty - 2\epsilon) \le 2 \epsilon\mathcal{N}_0.
\end{equation*}
As $\epsilon\in (0,(U_0^\infty-1)/2)$ is arbitrary and $b_{0,K_\infty}\in C([0,\infty))$, we finally let $\epsilon\to 0$ and end up with $b_{0,K_\infty}\big(U_0^\infty)=0$, which contradicts Lemma~\ref{lem.U0K}~(c). Consequently,
\begin{equation*}
	\lim\limits_{K\nearrow K_\infty} U_0(K) = \infty,
\end{equation*}
and the proof of Proposition~\ref{prop.cd2} is complete.
\end{proof}

\section{Proofs of Theorems~\ref{th.1} and~\ref{th.2}}\label{sec.proofs}

We are now in a position to complete the proofs of the two theorems stated in the Introduction, which is the aim of this short section.

\begin{proof}[Proof of Theorem~\ref{th.1}]
Consider the function $U:=U_0-U_1$ on the interval $[K_0,K_{\infty})$. Since $K_0\in (0,K_u]$ by Corollary~\ref{cor00}~(b), the critical point $Q_2$ is unstable and thus $U_1(K_0)>1$. Recalling that $U_0(K_0)=1$ by Lemma~\ref{lem.U0}, we conclude that $U(K_0)=1-U_1(K_0)<0$. Moreover, $\lim\limits_{K\to K_{\infty}}U(K)=\infty$ by Corollary~\ref{cor1} and Proposition~\ref{prop.cd2}. Since $U$ is a continuous function on $[K_0,K_{\infty})$ by Propositions~\ref{prop.cd} and~\ref{prop.cd2}, an application of Bolzano's Theorem guarantees the existence and uniqueness of $K_*\in(K_0,K_{\infty})$ such that $U(K_*)=0$; that is, $l_1(K_*)\cup l_0(K_*)$ is a complete orbit of the system~\eqref{p030} with $K=K_*$, connecting the critical points $Q_1$ and $Q_0$ and $K_*$ is the only value of the parameter $K$ for which such a complete trajectory exists. We now set
\begin{equation*}
	(Y_*,Z_*) := (Y_{0,K_*},Z_{0,K_*}) = (Y_{1,K_*},Z_{1,K_*}), \qquad X_* := Z_*^\nu,
\end{equation*}
and recall that Lemma~\ref{lem.P1} and Lemma~\ref{lem.P0} ensure that
\begin{subequations}\label{y011}
\begin{equation}
	\lim\limits_{\eta\to-\infty} (Y_*,Z_*)(\eta) = (m-1,0), \qquad X_*\in L^1(-\infty,0), \label{y011a}
\end{equation}
and
\begin{equation}
	\lim\limits_{\eta\to \infty} (Y_*,Z_*)(\eta) = (0,0), \qquad X_*\in L^1(0,\infty), \label{y011b}
\end{equation}
\end{subequations}
respectively. We then invert the implicit definition of the independent variable~\eqref{PSvar.large} and the change of variable~\eqref{change.large} to define $\xi$ in terms of $\eta\in\mathbb{R}$ and then $f_*$ by
\begin{equation*}
	\xi(\eta)=\exp\left[\int_{0}^{\eta}X_*(s)\,ds\right], \quad f_*(\xi(\eta)) := \left[\frac{\alpha X_*(\eta)}{2m}\right]^{1/(m-1)}\xi(\eta)^{2/(m-1)},
\end{equation*}
and
\begin{equation*}
 \big(f_*^{m-1}\big)'(\xi(\eta)) := \frac{\alpha(m-1)}{2m} \xi(\eta) Y_*(\eta).
\end{equation*}
It follows from~\eqref{y011} that
\begin{equation*}
	\xi_* := \lim\limits_{\eta\to -\infty} \xi(\eta) = \exp{\left[ - \int_{-\infty}^0 X_*(s)\,ds \right]}>0
\end{equation*}
and
\begin{equation*}
	\xi_0 := \lim\limits_{\eta\to \infty} \xi(\eta) = \exp{\left[ \int_0^{\infty} X_*(s)\,ds \right]}<\infty.
\end{equation*}
Hence, using again~\eqref{y011}, we obtain
\begin{equation*}
	f_*(\xi_*) = f_*(\xi_0) = \big(f_*^{m-1}\big)'(\xi_0) = 0, \quad  \big(f_*^{m-1}\big)'(\xi_*) = \frac{\alpha(m-1)^2}{2m} \xi_*,
\end{equation*}
from which we deduce that $f_*$ satisfies the interface conditions $\big(f_*^{m}\big)'(\xi_*) = \big(f_*^{m}\big)'(\xi_0) = 0$. Finally, it follows from the properties of $Z_*$ and the definition of $f_*$ that $f_*>0$ on $(\xi_*,\xi_0)$ and we have thus completed the proof of Theorem~\ref{th.1}.
\end{proof}

As already pointed out, Theorem~\ref{th.2} follows from Corollary~\ref{cor00} and Lemma~\ref{lem.U0}.

\begin{proof}[Proof of Theorem \ref{th.2}]
Pick $K\in(0,K_0]$. Corollary~\ref{cor00}~(b) and Lemma~\ref{lem.U0} entail that the trajectory $l_0(K)$ is a complete orbit connecting $Q_2$ to $Q_0$. It then follows from~\eqref{p029} that $Z_{0,K}(\eta)\to Z_K$ as $\eta\to -\infty$. As in the proof of Theorem~\ref{th.1}, we invert the implicit definition of the independent variable~\eqref{PSvar.large} and the change of variable~\eqref{change.large} to define $\xi$ in terms of $\eta\in\mathbb{R}$ and then $f$ by
\begin{equation*}
	\xi(\eta)=\exp\left[\int_{0}^{\eta}Z_{0,K}^\nu(s)\,ds\right], \quad f(\xi(\eta)) := \left[\frac{\alpha Z_{0,K}^{\nu}(\eta)}{2m}\right]^{1/(m-1)}\xi(\eta)^{2/(m-1)},
\end{equation*}
and
\begin{equation*}
	\big(f^{m-1}\big)'(\xi(\eta)) := \frac{\alpha(m-1)}{2m} \xi(\eta) Y_{0,K}(\eta).
\end{equation*}
Since $Z_{0,K}^\nu(\eta)\to Z_K^{\nu}>0$ as $\eta\to -\infty$ by Corollary~\ref{cor00} and $Z_{0,K}^\nu\in L^1(0,\infty)$ by Lemma~\ref{lem.P0}, we have
\begin{equation*}
	\lim\limits_{\eta\to -\infty} \xi(\eta) = 0, \qquad \xi_0 :=  \lim\limits_{\xi\to\infty} \xi(\eta) = \exp{\left[ \int_0^\infty Z_{0,K}^\nu(s)\,ds \right]} < \infty,
\end{equation*}
and the local behavior~\eqref{beh.P2} as $\xi\to0$ is then deduced from the definition of $f$. Next, since $Z_{0,K}(\eta)\to 0$ as $\eta\to\infty$, we find that $f(\xi_0)=0$, from which we conclude that the profile $f$ defined above has compact support $[0,\xi_0]$. Finally, we also know from Lemma~\ref{lem.P0} that
\begin{equation*}
	\lim\limits_{\eta\to\infty}\frac{Y_{0,K}(\eta)}{Z_{0,K}(\eta)}=-\frac{K}{m-1}\in(-\infty,0),
\end{equation*}
from which we deduce that
\begin{equation*}
	L :=\lim\limits_{\xi\to\xi_0}\frac{\big(f^{m-1}\big)'(\xi)}{f^{m+q-2}(\xi)}\in (-\infty,0).
\end{equation*}
Since $m+q>2$, we readily conclude that $\big(f^{m}\big)'(\xi_0) = \big(f^{m-1}\big)'(\xi_0) = 0$ and thus $f$ has an interface at $\xi=\xi_0$ according to Definition~\ref{def.dcp}, completing the proof.
\end{proof}

\bigskip

\noindent \textbf{Acknowledgements} This work is partially supported by the Spanish project PID2020-115273GB-I00. Part of this work has been developed during visits of R. G. I. to Institut de Math\'ematiques de Toulouse and Laboratoire de Math\'ematiques LAMA, Universit\'e de Savoie Mont Blanc and of Ph. L. to Universidad Rey Juan Carlos. Both authors thank these institutions for the hospitality and support.

\bibliographystyle{plain}

\begin{thebibliography}{}

\end{thebibliography}


\begin{thebibliography}{1}

\bibitem{Abd98}
U. G. Abdullaev, \emph{Instantaneous shrinking of the support of a solution of a nonlinear degenerate parabolic equation}, Mat. Zametki, \textbf{63} (1998), no.~3, 323--331 (Russian). Translation in Math. Notes, \textbf{63} (1998), no.~3-4, 285--292.


\bibitem{Amann}
H. Amann, \emph{Ordinary differential equations. An introduction to nonlinear analysis}, De Gruyter Studies in Mathematics, vol. 13, Berlin, 1990.

\bibitem{BS84}
C. Bandle and I. Stakgold, \emph{The formation of dead core on parabolic reaction-diffusion problems}, Trans. Amer. Math. Soc., \textbf{286} (1984), no. 1, 275-293.

\bibitem{Belaud01}
Y. Belaud, \emph{Time-vanishing properties of solutions of some degenerate parabolic equations with strong absorption}, Adv. Nonlinear Stud., \textbf{1} (2001), no.~2, 117--152.

\bibitem{BD10}
Y. Belaud and J.I. Diaz, \emph{Abstract results on the finite extinction time property: application to a singular parabolic equation}, J. Convex Anal.,  \textbf{17}  (2010),  no.~3-4, 827--860.

\bibitem{BHV01}
Y. Belaud, B. Helffer and L. V\'eron, \emph{Long-time vanishing properties of solutions of some semilinear parabolic equations}, Ann. Inst. H. Poincar\'e C Anal. Non Lin\'eaire, \textbf{18} (2001), no. 1, 43--68.
%
\bibitem{BeSh07}
Y. Belaud and A. Shishkov, \emph{Long-time extinction of solutions of some semilinear parabolic equations}, J. Differential Equations, \textbf{238}  (2007),  no.~1, 64--86.

\bibitem{BeSh22}
Y. Belaud and A. Shishkov, \emph{Extinction in a finite time for solutions of a class of quasilinear parabolic equations}, Asymptot. Anal., \textbf{127}  (2022),  no.~1-2, 97--119.



\bibitem{CV96}
M. Chaves and J. L. V\'azquez, \emph{Nonuniqueness in nonlinear heat propagation: a heat wave coming from infinity}, Differential Integral Equations, \textbf{9} (1996), no.~3, 447--464.

\bibitem{CV99}
M. Chaves and J. L. V\'azquez, \emph{Free boundary layer formation in nonlinear heat propagation}, Comm. Partial Differential Equations, \textbf{24} (1999), no.~11-12, 1945--1965.




\bibitem{EK79}
L. C. Evans and B. F. Knerr, \emph{Instantaneous shrinking of the support of nonnegative solutions to certain nonlinear parabolic equations and variational inequalities}, Illinois Math. J., \textbf{23} (1979), no.~1, 153--166.


%

\bibitem{GV94}
V. A. Galaktionov and J. L. V\'azquez, \emph{Extinction for a quasilinear heat equation with absorption I. Technique of intersection comparison}, Comm. Partial Differential Equations, \textbf{19} (1994), no.~7-8, 1075--1106.

\bibitem{GLS10}
J.-S. Guo, C.-T. Ling and Ph. Souplet, \emph{Non-self-similar dead-core rate for the fast diffusion equation with strong absorption}, Nonlinearity, \textbf{23} (2010), no.~3, 657-674.

\bibitem{IL23}
R. G. Iagar and Ph. Lauren\ced{c}ot, \emph{Finite time extinction for a diffusion equation with spatially inhomogeneous strong absorption}, Differential Integral Equations, \textbf{36} (2023), no. 11-12, 1005--1016.

\bibitem{IL24}
R. G. Iagar and Ph. Lauren\ced{c}ot, \emph{Eternal solutions to a porous medium equation with strong nonhomogeneous absorption. Part I: Radially non-increasing profiles}, Proc. Royal Soc. Edinburgh Ser. A, (2024), 1--22.

\bibitem{ILS24}
R. G. Iagar, Ph. Lauren\ced{c}ot and A. S\'anchez, \emph{Self-similar shrinking of supports and non-extinction for a nonlinear diffusion equation with strong nonhomogeneous absorption}, Commun. Contemp. Math., \textbf{26} (2024), no. 6, Article ID 2350028, 42p.


\bibitem{IS22}
R. G. Iagar and A. S\'anchez, \emph{Separate variable blow-up patterns for a reaction-diffusion equation with critical weighted reaction}, Nonlinear Anal., \textbf{217} (2022), Article ID 112740, 33p.

\bibitem{IS22a}
R. G. Iagar and A. S\'anchez, \emph{Eternal solutions for a reaction-diffusion equation with weighted reaction}, Discrete Contin. Dyn. Syst., \textbf{42} (2022), no. 3, 1465--1491.

\bibitem{IS22b}
R. G. Iagar and A. S\'anchez, \emph{Anomalous self-similar solutions of exponential type for the subcritical fast diffusion equation with weighted reaction}, Nonlinearity, \textbf{35} (2022), no. 7, 3385--3416.

\bibitem{IS24}
R. G. Iagar and A. S\'anchez, \emph{Existence and multiplicity of blow-up profiles for a quasilinear diffusion equation with source}, Submitted (2024), Preprint ArXiv no. 2404.10504.

%
%
%

\bibitem{J53}
C. W. Jones, \emph{On reducible non-linear differential equations occurring in mechanics}, Proc. Roy. Soc. London Ser. A, \textbf{217} (1953), 327--343.

\bibitem{Ka75}
A. S. Kalasnikov, \emph{The propagation of disturbances in problems of non-linear heat conduction with absorption}, U.S.S.R. Comput. Math. Math. Phys., \textbf{14} (1975), no.~4, 70--85.

\bibitem{Ka84}
A. S. Kalashnikov, \emph{Dependence of properties of solutions of parabolic equations on unbounded domains on the behavior of coefficients at infinity}, Mat. Sb., \textbf{125 (167)} (1984), no.~3, 398--409 (Russian). Translated as Math. USSR Sb., \textbf{53} (1986), no.~2, 399--410.

\bibitem{KP86}
S. Kamin and L. A. Peletier, \emph{Large time behavior of solutions of the porous media equation with absorption}, Israel J. Math., \textbf{55} (1986), no.~2, 129--146.

\bibitem{KPV89}
S. Kamin, L. A. Peletier and J. L. V\'azquez, \emph{Classification of singular solutions of a nonlinear heat equation}, Duke Math. J., \textbf{58} (1989), no.~3, 601--615.

\bibitem{KU87}
S. Kamin and M. Ughi, \emph{On the behavior as $t\to\infty$ of the solutions of the Cauchy problem for certain nonlinear parabolic equations}, J. Math. Anal. Appl., \textbf{128} (1987), no.~2, 456--469.

\bibitem{KV88}
S. Kamin and L. V\'eron, \emph{Existence and uniqueness of the very singular solution of the porous media equation with absorption}, J. Anal. Math., \textbf{51} (1988), 245--258.

\bibitem{Kwak98}
M. Kwak, \emph{A porous media equation with absorption. I. Long time behavior}, J. Math. Anal. Appl., \textbf{223} (1998), no.~1, 96--110.

\bibitem{Le97}
G. Leoni, \emph{On very singular self-similar solutions for the porous media equation with absorption}, Differential Integral Equations, \textbf{10} (1997), no.~6, 1123--1140.



\bibitem{PT86}
L. A. Peletier and D. Terman, \emph{A very singular solution of the porous media equation with absorption}, J. Differential Equations, \textbf{65} (1986), no.~3, 396--410.

\bibitem{PT85}
L. A. Peletier and A. Tesei, \emph{Diffusion in inhomogeneous media: localization and positivity}, Ann. Mat. Pura Appl. (4), \textbf{141} (1985), 307--330.

\bibitem{Pe}
L. Perko, \emph{Differential equations and dynamical systems. Third edition}, Texts in Applied Mathematics, \textbf{7}, Springer Verlag, New York, 2001.

\bibitem{S11}
Y. Seki, \emph{On exact dead core rates for a semilinear heat equation with strong absorption}, Commun. Contemp. Math., \textbf{13} (2011), no. 1, 1-52.



\bibitem{Sij}
J. Sijbrand, \emph{Properties of center manifolds}, Trans. Amer. Math. Soc., \textbf{289} (1985), no. 2, 431--469.

\end{thebibliography}

\end{document}